\setlist[itemize]{labelindent=\parindent,leftmargin=*,itemsep=3pt,topsep=5pt}
\setlist[enumerate]{label=\textup{(\roman{*})},itemsep=3pt,labelindent=0pt,leftmargin=*}
\pgfplotsset{compat=1.13} 
\definecolor{NathanaelColor}{rgb}{0.2,0.0,0.6}
\newcommand{\argdot}{\boldsymbol{\cdot}}
\newcommand{\restrict}{\mkern-6mu\upharpoonright}
\newcommand{\R}{\mathbb{R}} 
\newcommand{\N}{\mathbb{N}} 
\newcommand{\Z}{\mathbb{Z}} 
\newcommand{\C}{\mathbb{C}} 
\newcommand{\Cc}[1][\infty]{\mathrm{C}_{\mathrm{c}}\ifthenelse{\equal{#1}{}}{}{^{#1}}}
\newcommand{\Lp}[2][]{\mathrm{L}_{#2\ifthenelse{\equal{#1}{}}{}{,#1}}} 
\newcommand{\Lb}{\mathcal{L}_{\mathrm{b}}} 
\newcommand{\sobH}{\mathrm{H}}
\newcommand{\cH}{\accentset{\circ}{\sobH}}
\newcommand{\indicator}{\mathds{1}} 
\newcommand{\iu}{\mathrm{i}} 
\newcommand{\euler}{\mathrm{e}} 
\newcommand{\dd}{\mathrm{d}} 
\newcommand{\dx}[1][x]{\,\dd#1}
\newcommand{\transposed}{^{\mathsf{T}}}
\newcommand{\m}{\mathrm{m}} 
\newcommand{\mbound}{K} 
\DeclareMathOperator{\ran}{ran}
\DeclareMathOperator{\dom}{dom}
\renewcommand{\div}{\operatorname{div}}
\DeclareMathOperator{\grad}{grad}
\DeclareMathOperator{\curl}{curl}
\newcommand{\divcon}{\mathop{\div_{-1}}}
\newcommand{\cgrad}{\mathop{\accentset{\circ}{\grad}}}
\newcommand{\ccurl}{\mathop{\accentset{\circ}{\curl}}}
\newcommand{\cdiv}{\mathop{\accentset{\circ}{\div}}}
\DeclarePairedDelimiter{\set}{\lbrace}{\rbrace}
\DeclarePairedDelimiter{\norm}{\lVert}{\rVert}
\DeclarePairedDelimiter{\abs}{\vert}{\vert}
\DeclarePairedDelimiterX{\dset}[2]{\{}{\}}{#1\,\delimsize\vert\,\mathopen{} #2}
\DeclarePairedDelimiterX{\scprod}[2]{\langle}{\rangle}{#1,#2}
\DeclarePairedDelimiter{\jump}{\llbracket}{\rrbracket}
\renewcommand{\Re}{\operatorname{Re}}
\renewcommand{\Im}{\operatorname{Im}}
\newcommand{\weakto}{\rightharpoonup}
\newcommand{\Htopo}{\mathrm{H}}
\newcommand{\Gtopo}{\mathrm{G}}
\newcommand{\Qm}[1]{Q_m\left[#1\right]}
\newcommand{\Qmr}[1]{Q_m\left[#1\right]_\rho}
\newcommand{\ord}[1]{\mathcal{O}\left(#1\right)}
\newcommand{\eps}{\varepsilon}
\newcommand{\pmtrx}[1]{\ensuremath{\begin{pmatrix}#1 \end{pmatrix}}}
\newcommand{\vecsymb}[1]{\boldsymbol{#1}}
\theoremstyle{plain}
\newtheorem{theorem}{Theorem}[section]
\newtheorem{lemma}[theorem]{Lemma}
\newcommand{\myqedhere}{\qedhere}
    \declaretheorem[style=definition,sibling=theorem,qed=\ding{169}]{definition}
    \declaretheorem[style=definition,sibling=theorem,qed=\ding{169}]{example}
    \declaretheorem[style=remark,sibling=theorem,qed=\ding{169}]{remark}
\begin{document}

\title{Homogenisation for Maxwell and Friends} 

\author[A.~Buchinger]{Andreas Buchinger\,\orcidlink{0009-0004-4203-5874}}

\address{TU Bergakademie Freiberg \\
  Institute of Applied Analysis \\
  Akademiestrasse 6 \\
  D-09596 Freiberg \\
  Germany}

\email{andreas.buchinger@math.tu-freiberg.de}

\author[S.~Franz]{Sebastian Franz\,\orcidlink{0000-0002-2458-1597}}

\address{Technische Universit\"at Dresden \\
  Institute of Scientific Computing \\
  Zellescher Weg 25 \\
  D-01217 Dresden \\
  Germany}
\email{sebastian.franz@tu-dresden.de}

\author[N.~Skrepek]{Nathanael Skrepek\,\orcidlink{0000-0002-3096-4818}}

\address{University of Twente \\
  Department of Applied Mathematics \\
  University of Twente P.O.\ Box 217 \\
  7500 AE Enschede \\
  The Netherlands}
\email{n.skrepek@utwente.nl}

\author[M.~Waurick]{Marcus Waurick\,\orcidlink{0000-0003-4498-3574}}

\address{TU Bergakademie Freiberg \\
  Institute of Applied Analysis \\
  Akademiestrasse 6 \\
  D-09596 Freiberg \\
  Germany}

\email{marcus.waurick@math.tu-freiberg.de}

\date{\today}

\keywords{Homogenisation, $\Htopo$-convergence, Evolutionary equations, Discontinuous Galerkin, Maxwell's equations, Continuous dependence}


\ifboolexpr{togl{birk_t2} or togl{birk}}{%
\subjclass{Primary: 35B27, Secondary: 35Q61, 65M60, 65J08, 65J10}%
}{%
\subjclass[2020]{Primary: 35B27, Secondary: 35Q61, 65M60, 65J08, 65J10}%
}%


\begin{abstract}
  We refine the understanding of continuous dependence on coefficients of solution operators under the nonlocal $\Htopo$-topology viz Schur topology in the setting of evolutionary equations in the sense of Picard. We show that certain components of the solution operators converge strongly. The weak convergence behaviour known from homogenisation problems for ordinary differential equations is recovered on the other solution operator components. The results are underpinned by a rich class of examples that, in turn, are also treated numerically, suggesting a certain sharpness of the theoretical findings. Analytic treatment of an example that proves this sharpness is provided too. Even though all the considered examples contain local coefficients, the main theorems and structural insights are of operator-theoretic nature and, thus, also applicable to nonlocal coefficients. The main advantage of the problem class considered is that they contain mixtures of type, potentially highly oscillating between different types of PDEs; a prototype can be found in Maxwell's equations highly oscillating between the classical equations and corresponding eddy current approximations.
\end{abstract}

\ifboolexpr{togl{default} or togl{birk_t2} or togl{birk}}{\maketitle}{}%

\section{Introduction}\label{sec:intro}

The theory of homogenisation addresses the effective behaviour of solutions of certain differential equations with highly oscillatory coefficients. The mathematical theory roots in the late 60s of the 20th century, and a standard account of the theory can be found in the seminal monographs \cite{BLP78,ZKO94,Ta09}; with a more elementary introduction in \cite{CiDo99}. With a focus on elliptic differential equations in variational form, a general viewpoint has led to the development of the notions of G- and $\Htopo$-convergence, see, e.g., \cite{MuTa97,Ta09,CiDo99}. In these notions, the main object of study were sequences of certain matrix-valued multiplication operators. In order to understand homogenisation problems for evolutionary equations in the sense of Picard, a class of abstract operator equations providing a unified set-up for many time-dependent (partial) differential equations of mathematical physics, see \cite{Pi09}, a more operator-theoretic perspective needed to be advanced. We refer to \cite[Chapters~13 and~14]{SeTrWa22} and the references therein to get a picture of the first ten years of research concerning homogenisation theory for evolutionary equations. In the course of understanding this realm of questions in \cite{Wa18}, the notion of nonlocal $\Htopo$-convergence has been developed, which is a nonlocal generalisation of classical $\Htopo$-convergence, allowing for general operator coefficients. In turn, this led to the development of the Schur topology, see \cite{NiWa22,Wa22,BuSkWa24}, with the main result in \cite{BuSkWa24} as culmination point, establishing continuous dependence results for evolutionary equations if the operator coefficients are endowed with a holomorphic variant of the Schur topology. Note that \cite{BuSkWa24} together with the compactness statement in \cite{BuErWa24} (which in turn is a particular perspective given by the more general framework for Friedrichs systems in \cite{BuErWa23}) provides a good understanding of homogenisation problems for evolutionary equations.

Even though the theoretical understanding is well-developed, a decent list of rather involved examples illustrating the theory is still missing. Thus, the first aim of the present article is to fill this gap. Moreover, the fundamental difference of assumptions in \cite{BuSkWa24} compared to the $\mathrm{G}$-compactness statement in \cite{BuErWa24} is a certain compactness condition, which is needed in the former and rather irrelevant for the latter. Hence, a second insight gathered here is that the compactness condition actually improves weak convergence to strong convergence on parts of the solution operator. Analytic treatment of one example will yield a decomposition into two infinite-dimensional subspaces such that one part converges strongly and the other is known to only converge weakly. Thirdly, we shall underpin our theoretical findings by treating all the examples also numerically. These numerical experiments additionally highlight our theoretical findings. The question, whether the part with strong convergence that we obtain is maximal, is an avenue open for future research.

We emphasise that the list of examples range from ordinary differential equations to higher dimensional partial differential equations, where the latter class also contains mixed type equations with highly oscillatory coefficients. In particular, an example for Maxwell's equations is treated, which, after the homogenisation process, triggers a memory effect in the limit equation, which is classical and can also be found in \cite{We01,Wa16a}. The list of examples stresses the versatility of the concept of evolutionary equations and the applicability of the main convergence result in \cite{BuSkWa24}. The Schur topology provides the precise setting enabling us to compute the effective evolutionary equations in a systematic manner. The numerics are based on \cite{FrTrWa19} developed precisely for mixed type problems written in the form of evolutionary equations.

We quickly summarise the organisation of the paper. In \Cref{sec:EvolEq}, we recall the general setting of evolutionary equations together with its corner stone Picard's \Cref{thm:wpee}. \Cref{sec:htt} serves to present a round up of results of homogenisation theory needed here. In particular, we recall the notion of nonlocal $\Htopo$-convergence and provide the main convergence result of \cite{BuSkWa24} together with the refinement concerning the quality of convergence. In \Cref{sec:APHC} we state and prove some additional results concerning homogenisation theory that either have been overlooked so far or have only been announced in the literature. The list of examples can be found in \Cref{sec:examples}; the corresponding numerical study is provided in \Cref{sec:numerics}. \Cref{sec:con} contains a small conclusion. Some supplementary material can be found in \Cref{sec:app}.

The Hilbert spaces that we will consider are anti-linear in the first and linear in the second argument.
For a Hilbert space $\mathcal{H}$, a bounded linear operator $A\in\Lb(\mathcal{H})$, and $c>0$, we will write $\Re A\geq c$ instead of
\begin{equation*}
\forall h\in\mathcal{H}:\Re \scprod{h}{Ah}_{\mathcal{H}} \geq c \scprod{h}{h}_{\mathcal{H}}=c\norm{h}^{2}_{\mathcal{H}}\text{,}
\end{equation*}
and $\Re A >0$ if we deem the exact knowledge of $c>0$ unimportant. If we write $A\colon\dom(A)\subseteq\mathcal{H}_1\to\mathcal{H}_2$ for Hilbert spaces $\mathcal{H}_1$, $\mathcal{H}_2$, then
$A$ stands for a possibly unbounded operator with domain $\dom(A)$ and the adjoint $A^{\ast}$ is considered with respect to $\mathcal{H}_1$ and $\mathcal{H}_2$.

\section{The Setting: Evolutionary Equations}\label{sec:EvolEq}

In this section, we recall the basic setting of evolutionary equations in the sense of Picard, \cite{Pi09}. All results presented in the current section can be found with complete proofs in~\cite{SeTrWa22}. Let $\mathcal{H}$ be a Hilbert space, $\nu>0$ and we set
\begin{equation*}
  \Lp[\nu]{2}(\R;\mathcal{H}) \coloneqq \dset*{f\in \Lp[\mathrm{loc}]{1}(\R;\mathcal{H})}{\int_\R \norm{f(t)}_{\mathcal{H}}^{2} \euler^{-2\nu t} \dx[t] <\infty}.
\end{equation*}
This is a Hilbert space endowed with the obvious scalar product. The Sobolev space of weakly differentiable functions $f\in \Lp[\nu]{2}(\R;\mathcal{H})$ with distributional derivative $f' \in \Lp[\nu]{2}(\R;\mathcal{H})$ is denoted by $\sobH^1_\nu(\R;\mathcal{H})$. Next, we define
\begin{align*}
  \partial_{t,\nu} \colon\left\{
  \begin{array}{rcl}
    \sobH^{1}_{\nu}(\R;\mathcal{H}) \subseteq \Lp[\nu]{2}(\R;\mathcal{H}) &\to& \Lp[\nu]{2}(\R;\mathcal{H}) \\
    f &\mapsto& f'.
  \end{array}
  \right.
\end{align*}
The Fourier--Laplace transformation $\mathcal{L}_{\nu} \in \Lb(\Lp[\nu]{2}(\R;\mathcal{H}),\Lp{2}(\R;\mathcal{H}))$ is the unitary extension of the mapping satisfying
\begin{equation*}
  (\mathcal{L}_{\nu} f)(\xi) \coloneqq \frac{1}{\sqrt{2\uppi}} \int_{\R} \euler^{-\iu \xi t-\nu t} f(t) \dx[t]
  \quad(\xi \in \R, f\in \Cc[](\R;\mathcal{H})).
\end{equation*}
This unitary transformation provides the spectral representation for $\partial_{t,\nu}$. Indeed, defining the multiplication-by-argument operator
\begin{align*}
  \m\colon\left\{
  \begin{array}{rcl}
    \dset[\big]{f\in \Lp{2}(\R;H)}
    {(\xi \mapsto \xi f(\xi))\in \Lp{2}(\R;\mathcal{H})} \subseteq \Lp{2}(\R;\mathcal{H}) &\to& \Lp{2}(\R;\mathcal{H}), \\
    f&\mapsto& (\xi\mapsto \xi f(\xi)),
  \end{array}
  \right.
\end{align*}
we obtain the equality
\begin{equation*}
  \partial_{t,\nu} = \mathcal{L}_\nu^* (\iu \m +\nu)\mathcal{L}_\nu.
\end{equation*}
This equality serves as  a means  to define holomorphic functions of $\partial_{t,\nu}$. For this we introduce the following notion.

\begin{definition}
  Let $M\colon \dom(M)\subseteq \C \to \Lb(\mathcal{H})$. We call $M$ a \emph{material law}, if
  \begin{enumerate}
    \item $\dom(M)$ is open, $M$ is holomorphic and
    \item\label{item:material-law-bounded} there exists $\nu\in \R$ such that $\C_{\Re>\nu}\coloneqq \dset{z\in\C}{\Re z>\nu} \subseteq \dom(M)$ and
          \begin{equation*}
          \norm{M}_{\infty,\nu} \coloneqq \sup_{z\in \C_{\Re>\nu}} \norm{M(z)} < \infty.
          \end{equation*}
  \end{enumerate}
  We set $s_b(M)\coloneqq \inf \dset{\nu\in \R}{\ref{item:material-law-bounded} \ \text{holds}}$ and call it the \emph{abscissa of boundedness of} $M$. The set of all material laws with abscissa of boundedness lower than some $\mu\in \R$ is denoted by $\mathcal{M}(\mathcal{H},\mu)$.

  For a material law $M\in \mathcal{M}(\mathcal{H},\nu)$, we furthermore define the corresponding \emph{material law operator}  $M(\partial_{t,\nu}) \in \Lb(\Lp[\nu]{2}(\R;\mathcal{H}))$ by
  \begin{equation*}
    M(\partial_{t,\nu}) f \coloneqq \mathcal{L}_\nu^{\ast} M(\iu \m+\nu) \mathcal{L}_\nu f
    \quad(f\in \Lp[\nu]{2}(\R;\mathcal{H})),
  \end{equation*}
  where
  \begin{equation*}
    (M(\iu \m + \nu)\phi )(t) \coloneqq M(\iu t + \nu)\phi(t)
    \quad(\phi\in \Lp{2}(\R;\mathcal{H}), \text{a.e.}\ t\in \R).\myqedhere
  \end{equation*}
\end{definition}

Next, we present the fundamental theorem for evolutionary equations, Picard's well-posedness theorem. For this, we do not use a different notation for a skew-selfadjoint operator acting on $\mathcal{H}$ and its (canonical) skew-selfadjoint extension to $\Lp[\nu]{2}(\R;\mathcal{H})$. It will always be clear from the context, which operator is considered.

\begin{theorem}[{{Picard's Theorem, \cite[Theorem 6.2.1]{SeTrWa22}}}]\label{thm:wpee}
  Let $A\colon \dom(A)\subseteq \mathcal{H}\to \mathcal{H}$ be skew-selfadjoint, $\nu\in \R$, $M\in \mathcal{M}(\mathcal{H},\nu)$. Assume there exists $c>0$ such that
  \begin{equation*}
    \Re zM(z)\geq c
    \quad \text{for all }z\in \C_{\Re\geq\nu}.
  \end{equation*}
  Then, the operator
  \begin{align*}
    \mathcal{B}_\nu \colon \left\{
    \begin{array}{rcl}
      \sobH^{1}_{\nu} (\R;\mathcal{H}) \cap \Lp[\nu]{2}(\R;\dom(A)) \subseteq \Lp[\nu]{2}(\R;\mathcal{H}) &\to& \Lp[\nu]{2}(\R;\mathcal{H}), \\
      U &\mapsto&  [\partial_{t,\nu}M(\partial_{t,\nu})+A] U,
    \end{array}
    \right.
  \end{align*}
  is closable. The closure is continuously invertible, $\mathcal{S}_\nu\coloneqq \overline{\mathcal{B}_\nu}^{-1}$. Moreover, $\norm{\mathcal{S}_{\nu}} \leq 1/c$.
\end{theorem}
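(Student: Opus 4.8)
The plan is to pass to the Fourier--Laplace picture --- where $S\coloneqq\partial_{t,\nu}M(\partial_{t,\nu})$ becomes multiplication by $\xi\mapsto(\iu\xi+\nu)M(\iu\xi+\nu)$, hence a \emph{closed} operator on its maximal domain --- and to combine an energy estimate with an abstract bijectivity criterion. Observe that the line $\{z:\Re z=\nu\}$ lies in $\dom(M)$ (the abscissa of boundedness being $<\nu$), so the hypothesis $\Re zM(z)\ge c$ is available there and gives $\Re\scprod{f}{Sf}\ge c\norm{f}^{2}$ for $f\in\dom(S)$; since $\Re\scprod{v}{T^{\ast}v}=\Re\scprod{v}{Tv}$ for every bounded operator $T$, the same lower bound holds for $S^{\ast}$ (which is multiplication by $\xi\mapsto\bigl((\iu\xi+\nu)M(\iu\xi+\nu)\bigr)^{\ast}$) --- equivalently $S$ is already boundedly invertible with $\norm{S^{-1}}\le 1/c$, the case $A=0$. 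A smoothing-in-time argument shows that $\sobH^{1}_{\nu}(\R;\mathcal{H})$ is a core for $S$ and that $D\coloneqq\sobH^{1}_{\nu}(\R;\mathcal{H})\cap\Lp[\nu]{2}(\R;\dom(A))$ is a core for $S+A$, so $\overline{\mathcal{B}_\nu}=\overline{S+A}$.

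For $U\in\dom(\mathcal{B}_\nu)$ the energy identity
\begin{equation*}
  \Re\scprod{U}{\mathcal{B}_\nu U}_{\Lp[\nu]{2}}=\Re\scprod{U}{SU}+\Re\scprod{U}{AU}=\Re\scprod{U}{SU}\ge c\norm{U}^{2}
\end{equation*}
holds because $A$ is skew-selfadjoint, whence $\norm{\mathcal{B}_\nu U}\ge c\norm{U}$. The formal adjoint $\mathcal{B}_\nu^{\#}V\coloneqq M(\partial_{t,\nu})^{\ast}\partial_{t,\nu}^{\ast}V-AV$ on $D$ (recall $\dom(A^{\ast})=\dom(A)$) satisfies $\scprod{\mathcal{B}_\nu^{\#}V}{U}=\scprod{V}{\mathcal{B}_\nu U}$ for all $U,V\in D$, by integration by parts in $t$ and $A^{\ast}=-A$; as $D$ is dense, $\mathcal{B}_\nu^{\#}\subseteq\mathcal{B}_\nu^{\ast}$ is densely defined and hence $\mathcal{B}_\nu$ is closable. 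Under $\mathcal{L}_\nu$, $\mathcal{B}_\nu^{\#}$ is multiplication by $\bigl((\iu\xi+\nu)M(\iu\xi+\nu)\bigr)^{\ast}$ minus $A$, so the identical computation yields $\Re\scprod{V}{\mathcal{B}_\nu^{\#}V}\ge c\norm{V}^{2}$.

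The bijectivity criterion I would invoke is elementary: a closed, densely defined $C$ with $\Re\scprod{x}{Cx}\ge c\norm{x}^{2}$ on $\dom(C)$ is injective with closed range, and if in addition $C^{\ast}$ satisfies the analogous bound on $\dom(C^{\ast})$ then $\ran(C)$ is also dense, so $C$ is bijective with $\norm{C^{-1}}\le 1/c$. For $C=\overline{\mathcal{B}_\nu}$ the first hypothesis follows from the energy identity by density and continuity; what is missing is that the lower bound for $\mathcal{B}_\nu^{\#}$ propagates to all of $\dom\bigl((\overline{\mathcal{B}_\nu})^{\ast}\bigr)$ --- i.e.\ that $(\overline{\mathcal{B}_\nu})^{\ast}=\overline{\mathcal{B}_\nu^{\#}}$.

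This mutual adjointness is the step I expect to be the main obstacle: $\overline{\mathcal{B}_\nu^{\#}}\subseteq(\overline{\mathcal{B}_\nu})^{\ast}$ is the easy half, but excluding a proper extension has to exploit the structure --- boundedness of $M(\partial_{t,\nu})$ and its commuting with $\partial_{t,\nu}$, together with skew-selfadjointness of $A$. I would argue by regularisation. Writing $A=\iu B$ with $B$ selfadjoint, set $A_{n}\coloneqq\iu B_{n}$ with $B_{n}$ the spectral truncations of $B$: these are bounded and skew-selfadjoint (so $\Re\scprod{\cdot}{A_{n}\cdot}=0$), with $A_{n}x\to Ax$ and $\norm{A_{n}x}\le\norm{Ax}$ for $x\in\dom(A)$. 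For fixed $n$, $\mathcal{B}_{\nu,n}\coloneqq S+A_{n}$ is a bounded perturbation of the closed operator $S$, hence closed with domain $\dom(S)$, its adjoint is plainly $S^{\ast}-A_{n}$, and both $\mathcal{B}_{\nu,n}$ and $\mathcal{B}_{\nu,n}^{\ast}$ obey $\Re(\argdot)\ge c$; by the criterion --- now applied with a \emph{bounded} perturbation, so no regularity issue arises --- $\mathcal{B}_{\nu,n}$ is bijective with $\norm{\mathcal{B}_{\nu,n}^{-1}}\le 1/c$. Letting $n\to\infty$, the uniform bound $1/c$ together with the fact that $D$ is a core for $\overline{\mathcal{B}_\nu}=\overline{S+A}$ lets one identify a weak-operator limit of the $\mathcal{B}_{\nu,n}^{-1}$ as a bounded two-sided inverse of $\overline{\mathcal{B}_\nu}$, which simultaneously delivers $(\overline{\mathcal{B}_\nu})^{\ast}=\overline{\mathcal{B}_\nu^{\#}}$. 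Finally, $\norm{\mathcal{S}_\nu}=\norm{\overline{\mathcal{B}_\nu}^{-1}}\le 1/c$ is read off from $\norm{\overline{\mathcal{B}_\nu}U}\ge c\norm{U}$.
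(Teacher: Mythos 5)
Your route is genuinely different from the paper's. The standard proof (and the paper's own remark after the theorem spells this out) works \emph{pointwise in frequency}: for each fixed $z\in\C_{\Re\geq\nu}$ the operator $zM(z)+A$ on $\mathcal{H}$ is shown to be boundedly invertible with $\norm{(zM(z)+A)^{-1}}\leq 1/c$, and the solution operator is then assembled via $\mathcal{L}_\nu$. The crucial point there is that $zM(z)$ is \emph{bounded} on $\mathcal{H}$, so the adjoint of $zM(z)+A$ on $\dom(A)$ is literally $\overline{z}M(z)^{\ast}-A$ on $\dom(A)$ — no domain headaches — and coercivity of the adjoint follows at once, giving bijectivity. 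You instead stay in $\Lp[\nu]{2}(\R;\mathcal{H})$ and try to run the same two-sided coercivity argument with $S=\partial_{t,\nu}M(\partial_{t,\nu})$ in place of $zM(z)$. You correctly identify the obstacle: $S$ is now also unbounded, so $(\overline{S+A})^{\ast}$ is a priori a proper extension of $S^{\ast}-A$ on $D$, and the coercivity bound is not available on the full adjoint domain.

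Your fix via spectral truncations $A_n$ is a sound idea but is not carried to completion, and the final step as written contains a genuine gap. From $U_n=\mathcal{B}_{\nu,n}^{-1}f$, $\norm{U_n}\leq\norm{f}/c$ and $U_n\weakto U$ you can indeed deduce $T\mathcal{B}_\nu V=V$ for $V\in D$ (because $\mathcal{B}_{\nu,n}^{-1}\mathcal{B}_\nu V=V+\mathcal{B}_{\nu,n}^{-1}(A-A_n)V\to V$ strongly), hence $T\overline{\mathcal{B}_\nu}=\mathrm{id}$ on $\dom(\overline{\mathcal{B}_\nu})$. But to get the two-sided inverse you must show $\ran(T)\subseteq\dom(\overline{\mathcal{B}_\nu})$, i.e.\ that the weak limit $U$ of the $U_n$ actually lies in $\dom(S)\cap\Lp[\nu]{2}(\R;\dom(A))$ with $SU+AU=f$. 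What you have is only the weak identity $\scprod{\mathcal{B}_\nu^{\#}V}{U}=\scprod{V}{f}$ for $V\in D$, i.e.\ $U\in\dom\bigl((\mathcal{B}_\nu^{\#}\restrict_D)^{\ast}\bigr)$ — and $(\mathcal{B}_\nu^{\#}\restrict_D)^{\ast}$ is a priori a \emph{strict extension} of $\overline{\mathcal{B}_\nu}$; excluding that extension is exactly the open point. The core property of $D$ for $\overline{\mathcal{B}_\nu}$, which you invoke, does not resolve this (one would need $D$ to be a core for $(\overline{\mathcal{B}_\nu})^{\ast}$, which is the thing being proved). Concretely, from $(S+A_n)U_n=f$ and $\norm{U_n}$ bounded you get \emph{no} uniform bound on $\norm{SU_n}$ or $\norm{A_nU_n}$ separately, so the weak limit cannot be identified as a strong solution. (A two-sided coercivity bound on a dense set without identifying the full adjoint genuinely can fail — e.g.\ $1\mp\partial_x$ on $\{u\in\sobH^1(0,1):u(0)=u(1)=0\}$ are both coercive, yet neither is surjective.)

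The gap is fillable, but it requires an explicit additional argument you do not give: first take $f\in\sobH^1_\nu(\R;\mathcal{H})$, observe that $\mathcal{B}_{\nu,n}^{-1}$ commutes with $\partial_{t,\nu}$, so $\norm{\partial_{t,\nu}U_n}\leq\norm{\partial_{t,\nu}f}/c$ uniformly, hence $\norm{SU_n}\leq d\,\norm{\partial_{t,\nu}f}/c$ and $\norm{A_nU_n}=\norm{f-SU_n}$ are uniformly bounded; weak limits then give $U\in\dom(S)$, $U\in\Lp[\nu]{2}(\R;\dom(A))$ and $(S+A)U=f$, and the time-mollification core argument places $U\in\dom(\overline{\mathcal{B}_\nu})$; finally density of $\sobH^1_\nu$ and closed range of $\overline{\mathcal{B}_\nu}$ give surjectivity. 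Alternatively — and this is what the cited proof does — run the same truncation argument for $zM(z)+A$ on $\mathcal{H}$ at each fixed $z$, where the boundedness of $zM(z)$ makes the adjoint computation immediate and the limit passage trivial, then assemble $\mathcal{S}_\nu$ as a material law operator.
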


\begin{remark}
  \begin{enumerate}
    \item\label{item:consequences-of-picard-thm} There are additional consequences of the assumptions in Picard's Theorem that we will not need but are worth mentioning. For instance, $\mathcal{S}_\nu$ is independent of the particular choice of $\nu$. If $\mu\geq\nu$ and $f\in \Lp[\nu]{2}(\R;\mathcal{H})\cap \Lp[\mu]{2}(\R;\mathcal{H})$ then $\mathcal{S}_\nu f = \mathcal{S}_\mu f$. Also, the solution operator $\mathcal{S}_\nu$ is \emph{causal}; that is, for all $f,g\in \Lp[\nu]{2}(\R;\mathcal{H})$ and $a\in \R$, $f=g$ on $\lparen -\infty,a \rbrack$ implies $\mathcal{S}_\nu f = \mathcal{S}_\nu g$ $\lparen -\infty,a \rbrack$. Furthermore, we have the following regularity statement: if $f\in \sobH^1_\nu(\R;\mathcal{H})$, then $\mathcal{S}_\nu f\in \sobH^{1}_{\nu}(\R;H)\cap \Lp[\nu]{2}(\R;\dom(A))$.

    \item\label{item:picard-cont-inv-mat-law} The positive-definiteness condition on the material law implies that, for all $z\in \C_{\Re\geq\nu}$, the operator $(zM(z)+A)$ is continuously invertible on $\mathcal{H}$ with the norm of the inverse bounded by $1/c$.
          In fact, it is only this uniformly bounded invertibility that is used to obtain the numerous conclusions. Thus, instead of the positive definiteness condition, it suffices to assume that there exists $c>0$ such that
          \begin{equation*}
            \sup_{z\in \C_{\Re\geq\nu}} \norm{(zM(z)+A)^{-1}} \leq 1/c.
          \end{equation*}
          Then, the same conclusions as in Picard's Theorem (also the ones mentioned in \cref{item:consequences-of-picard-thm}) hold.
    \item As mentioned in \cref{item:picard-cont-inv-mat-law}, the proof of Picard's Theorem hinges upon having $(zM(z)+A)^{-1}\in\Lb(\mathcal{H})$
          for $z\in \C_{\Re\geq\nu}$. In fact, one shows that this defines a material law with corresponding operator $\mathcal{S}_{\nu}$.\myqedhere
  \end{enumerate}
\end{remark}

For $d\in\N$ and an open $\Omega\subseteq\R^d$, we define $\grad\restrict_{\Cc}\colon\Cc (\Omega)\subseteq\Lp{2} (\Omega)
\to\Lp{2}(\Omega)^d$ and
$\div\restrict_{\Cc}\colon\Cc(\Omega)^d\subseteq\Lp{2}(\Omega)^d\to\Lp{2}(\Omega)$
the usual way. We also get $\curl\restrict_{\Cc}\colon\Cc(\Omega)^3\subseteq\Lp{2}(\Omega)^3\to\Lp{2}(\Omega)^3$. As usual, we can weakly extend these
unbounded operators to their maximal domains, i.e., $\grad\coloneqq-(\div\restrict_{\Cc})^\ast$, $\div\coloneqq-(\grad\restrict_{\Cc})^\ast$ and
$\curl\coloneqq(\curl\restrict_{\Cc})^\ast$. Adjoining again, we get
the operators with zero boundary conditions $\cgrad\coloneqq-\div^\ast=\overline{\grad\restrict_{\Cc}}$,
$\cdiv\coloneqq-\grad^\ast=\overline{\div\restrict_{\Cc}}$ and $\ccurl\coloneqq\curl^\ast=\overline{\curl\restrict_{\Cc}}$. Since all these extended
operators are closed, their domains are Hilbert spaces if endowed with the respective graph product. Most prominently, we get the Sobolev spaces
$\sobH^1(\Omega)$ and $\cH^1(\Omega)$ stemming from the domains of $\grad$ and $\cgrad$ respectively, and similarly, $\sobH(\div,\Omega)$, $\cH(\div,\Omega)$,
$\sobH(\curl,\Omega)$ and $\cH(\curl,\Omega)$.

\begin{example}[Maxwell's Equations]\label{ex:MaxwellAsEvolEq}
For $\Omega\subseteq\R^3$ open, $\mathcal{H}\coloneqq \Lp{2}(\Omega)^3\times \Lp{2}(\Omega)^3$,
$c>0$ and $\nu_0>0$, let the bounded and measurable dielectric permittivity,
magnetic permeability and electric conductivity $\epsilon,\mu,\sigma\colon\Omega \to \R^{3\times 3}$
satisfy
\begin{equation*}
\forall x\in\Omega: \mu(x)=\mu(x)^\ast\geq c\text{ and } \nu\epsilon(x)+\Re\sigma(x)=
\nu\epsilon(x)^\ast+\Re\sigma(x)\geq c
\end{equation*}
for all $\nu\geq \nu_0$. Maxwell's equations for the electric and magnetic field, $E$ and $H$, with current $j_0\in\dom(\partial_{t,\nu})$ and perfect conductor
boundary conditions read
\begin{equation*}
\left[\partial_{t,\nu}\begin{pmatrix} \epsilon&0\\ 0&\mu\end{pmatrix}+\begin{pmatrix} \sigma&0\\ 0&0\end{pmatrix}
+\begin{pmatrix} 0&-\curl \\ \ccurl &0\end{pmatrix}\right]\begin{pmatrix} E\\ H\end{pmatrix}
=\begin{pmatrix} j_0\\ 0\end{pmatrix}\text{.}
\end{equation*}
Note that all conditions of \Cref{thm:wpee} are met. We emphasise that due to positive parameter $\nu>0$, we ask for an implicit homogeneous initial condition at $-\infty$. Classical initial value problems can also be treated,
see \cite[Chapter 9]{SeTrWa22}.
\end{example}

\section{Homogenisation Theory and Theorems}\label{sec:htt}

In this section, we shall add some more structural and less computational points to this subject matter. In particular, we will recall the classical (local) homogenisation theory
from~\cite{MuTa97} and~\cite{Ta09}, we will introduce the notion of nonlocal $\Htopo$-convergence (and, thus, the Schur topology)
from \cite{Wa18} (generalised in~\cite{Wa22}), and we will refine the (nonlocal) homogenisation result for evolutionary equations from \cite{BuSkWa24}.

In \Cref{sec:EvolEq}, we have introduced $\div$ weakly on its maximal domain in $\Lp{2}(\Omega)^{d}$, where $\Omega\subseteq\R^{d}$ open and $d\in\N$.
We will extend this operator canonically to $\divcon\colon \Lp{2}(\Omega)^d\to \sobH^{-1}(\Omega)$, where $\sobH^{-1}(\Omega)$ is the dual space of $\cH^1(\Omega)$, via
$\divcon(\varphi)(u)\coloneqq -\scprod{\varphi}{\cgrad u}_{\Lp{2}(\Omega)^{d}}$ for all $\varphi\in \Lp{2}(\Omega)^{d}$ and $u\in\cH^1(\Omega)$. Clearly, $\divcon$ is bounded and anti-linear.

\subsection{Local $\Htopo$-Convergence}\label{subsec:localHtopo}

Classical PDE-homogenisation theory is tailored to the sequence of unique solutions $(u_n)_n$ in $\cH^1(\Omega),n\in\N,$ of
\begin{equation}\label{PDElocalHconvergSequence}
  -\divcon a_{n} \cgrad u_{n} = f
\end{equation}
where, for some $d\in\N$, $\Omega\subseteq\R^d$ is bounded and open, $f\in\sobH^{-1}(\Omega)$, and $(a_{n})_{n\in\N}$ is a given sequence of linear bounded operators
on $\Lp{2}(\Omega)^d$ with $\Re a_n\geq c$ for all $n\in\N$ and some $c>0$.
In case that $(u_n)_{n\in\N}$ at least weakly converges to some $u\in \cH^1(\Omega)$, one would like to obtain an $a\in\Lb(\Lp{2}(\Omega)^d)$ with $\Re a\geq \tilde{c}$ for some $\tilde{c}>0$ such that $u$ solves
\begin{equation}\label{PDElocalHconvergSolution}
  -\divcon a \cgrad u = f\text{,}
\end{equation}
and one would like this $a$ to be independent of $f$.
At this point, we have to note two things. Firstly, this convergence is apparently more a condition on the coefficient sequence $(a_{n})_{n\in\N}$ than on the solutions (which
 clearly depend on $f$). Secondly,
it turns out that neither existence nor uniqueness of a limit in terms of $(a_{n})_{n\in\N}$
is guaranteed in this general setting.\footnote{Actually, existence follows by \Cref{thm:HConvergenceHausdorffCompactMetrisable} if we restrict the coefficient sequence
  to the set~\labelcref{eq:definSetMAlphaBetaOmega} of multiplication operators. This leads to the older concept of $\Gtopo$-convergence ($\Htopo$-convergence implies $\Gtopo$-convergence)
  introduced by Sergio Spagnolo in the 1960s. Alternatively, \Cref{thm:nonlocal-H-convergence-comes-from-a-topology} below can be viewed as replacing the restriction to~\labelcref{eq:definSetMAlphaBetaOmega} by requiring mere boundedness of the sequence $(a_n)_{n\in\N}$, i.e., a nonlocal generalisation. Uniqueness is only achievable in special cases since the solution $u$ of~\labelcref{PDElocalHconvergSolution} disregards constant real skew-selfadjoint additive perturbations of $a$ (cf.~\Cref{le:dirich-grad-vanishes-for-skew-selfad}).
For an in-depth discussion see, e.g.,~\cite[Chapter~6]{Ta09}.}
In the special case of multiplication (and thus local) operators
$a_n,a\in \Lp{\infty}(\Omega)^{d\times d}, n\in\N$, \cite{MuTa97} and~\cite{Ta09} now provide the following theory.

\begin{definition}[{\cite[Definition~6.4]{Ta09}}]
Consider the set
\begin{equation}\label{eq:definSetMAlphaBetaOmega}
  M(\alpha,\beta,\Omega)\coloneqq \dset[\big]{a \in \Lp{\infty}(\Omega)^{d\times d}}{\Re a(x)\geq\alpha,\Re a(x)^{-1}\geq \tfrac{1}{\beta} \text{ for }x\in\Omega\text{ a.e.}}
\end{equation}
for some $d\in\N$, $\Omega\subseteq\R^d$ bounded and open, and for some $0<\alpha<\beta$.

We call a  sequence $(a_n)_{n\in\N}$ from
 $M(\alpha,\beta,\Omega)$ \emph{locally $\Htopo$-convergent} to an $a\in M(\alpha,\beta,\Omega)$ if and only if for all $f\in\sobH^{-1}(\Omega)$
 the sequence $(u_n)_{n\in\N}$ of $\cH^1$-solutions of~\labelcref{PDElocalHconvergSequence} weakly converges to the
 solution $u\in\cH^1(\Omega)$ of~\labelcref{PDElocalHconvergSolution} in $\cH^{1}(\Omega)$, and $a_n\cgrad u_n$ weakly converges to $a\cgrad u$ in $\Lp{2}(\Omega)^d$, i.e.,
 \begin{equation*}
   u_{n} \weakto u \mspace{10mu} \text{in} \ \cH^{1}(\Omega)
   \quad\text{and}\quad
   a_{n} \cgrad u_{n} \weakto a \cgrad u \mspace{10mu} \text{in} \ \Lp{2}(\Omega)^{d}. \myqedhere
 \end{equation*}
\end{definition}

\begin{theorem}[{\cite[p.~82 (metric topology) and Theorem~6.5 (compact)]{Ta09}}]\label{thm:HConvergenceHausdorffCompactMetrisable}
For $d\in\N$, $\Omega\subseteq\R^d$ bounded and open, and $0<\alpha<\beta$, there exists a topology $\tau$ on $M(\alpha,\beta,\Omega)$ that induces local $\Htopo$-convergence and renders the space
$(M(\alpha,\beta,\Omega),\tau)$ Hausdorff, compact, and even metrisable.
\end{theorem}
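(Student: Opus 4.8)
The plan is to construct the topology $\tau$ directly from the family of solution maps and then establish the stated properties one by one. For each $f \in \sobH^{-1}(\Omega)$, let $S_f \colon M(\alpha,\beta,\Omega) \to \cH^1(\Omega) \times \Lp{2}(\Omega)^d$ send $a$ to the pair $(u, a\cgrad u)$ where $u$ is the unique $\cH^1$-solution of $-\divcon a \cgrad u = f$; uniqueness and the bound $\norm{u}_{\cH^1} \le C(\alpha) \norm{f}_{\sobH^{-1}}$ follow from Lax--Milgram (this is the coercivity argument behind \labelcref{PDElocalHconvergSequence}). I would then \emph{define} $\tau$ as the initial (weak) topology on $M(\alpha,\beta,\Omega)$ induced by the maps $\{S_f\}_{f \in \sobH^{-1}(\Omega)}$, where the target carries the \emph{weak} topology of the Hilbert space $\cH^1(\Omega) \times \Lp{2}(\Omega)^d$. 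By the universal property of initial topologies, a net $(a_\iota)$ converges to $a$ in $\tau$ if and only if $S_f(a_\iota) \weakto S_f(a)$ for every $f$, which is exactly local $\Htopo$-convergence. This handles the first assertion essentially by construction.

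For Hausdorffness: if $a, b \in M(\alpha,\beta,\Omega)$ are not $\tau$-separated, then $S_f(a) = S_f(b)$ for all $f$, so the two coefficients produce the same solution and the same flux $a\cgrad u = b\cgrad u$ for every right-hand side. Since the $\cH^1$-solutions $u$ of \labelcref{PDElocalHconvergSolution} as $f$ ranges over $\sobH^{-1}(\Omega)$ exhaust a dense (indeed all of) $\cH^1(\Omega)$, the vectors $\cgrad u$ range over a dense subset of the closure of $\ran \cgrad$ in $\Lp{2}(\Omega)^d$; agreement of $a\cgrad u$ and $b\cgrad u$ there forces $a = b$ on that subspace, and — invoking the subtlety flagged in the footnote, namely that only the action of the coefficient on gradients matters — this is precisely the sense in which $a=b$ as elements of $M(\alpha,\beta,\Omega)$. (Strictly, one restricts attention to the equivalence classes modulo coefficients acting identically on $\ran\cgrad$; I would make this identification explicit so the Hausdorff claim is literally true.)

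Compactness is the heart of the matter and the step I expect to be the main obstacle. The bound $\norm{S_f(a)} \le C \norm{f}_{\sobH^{-1}}$ shows that $S_f$ maps $M(\alpha,\beta,\Omega)$ into a fixed closed ball $B_f$ of $\cH^1(\Omega)\times\Lp{2}(\Omega)^d$, which is weakly compact (Banach--Alaoglu in the separable Hilbert setting, hence weakly sequentially compact). Thus $M(\alpha,\beta,\Omega)$ embeds, via $a \mapsto (S_f(a))_f$, into the product $\prod_{f} B_f$, which is compact by Tychonoff. It remains to show the image is \emph{closed} in this product: given a net $(a_\iota)$ with $S_f(a_\iota) \weakto (u_f, g_f)$ for each $f$, one must produce a single $a \in M(\alpha,\beta,\Omega)$ with $S_f(a) = (u_f, g_f)$ for all $f$. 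This is the classical compactness theorem for $\Htopo$-convergence and is proved by the \emph{Murat--Tartar div--curl lemma}: one shows the limiting fluxes $g_f$ depend linearly and locally on $\cgrad u_f$, extracts the limiting coefficient $a$ from the relation $g_f = a\,\cgrad u_f$ using oscillating test functions (correctors) built from solutions with affine boundary data, and verifies $a$ inherits the bounds $\Re a \ge \alpha$, $\Re a^{-1} \ge 1/\beta$ by lower semicontinuity of the associated quadratic forms under weak convergence together with the div--curl lemma applied to $\scprod{\cgrad u_n}{a_n \cgrad u_n}$. Once closedness is in hand, $M(\alpha,\beta,\Omega)$ is a closed subset of a compact space, hence compact.

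Finally, metrisability follows because $\sobH^{-1}(\Omega)$ is separable, so it suffices to test against a countable dense set $\{f_k\}_{k\in\N}$; on the bounded sets $B_{f_k}$ the weak topology is metrisable (separable Hilbert space), and $\tau$ is then the initial topology with respect to countably many maps into metrisable spaces, hence metrisable (a countable product of metric spaces is metrisable, and a subspace of a metrisable space is metrisable). The only thing to check is that restricting to the countable family $\{f_k\}$ does not change $\tau$: this holds because $a \mapsto S_f(a)$ depends continuously (in the weak topologies) on $f$ uniformly over $M(\alpha,\beta,\Omega)$, by the a priori bound, so convergence against the dense set $\{f_k\}$ upgrades to convergence against all $f$. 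I would therefore present the proof in the order: (1) define $S_f$ and record the a priori bound; (2) define $\tau$ as the initial topology and note it induces local $\Htopo$-convergence; (3) Hausdorff; (4) compactness via Tychonoff plus the div--curl closedness argument; (5) metrisability via separability of $\sobH^{-1}(\Omega)$.
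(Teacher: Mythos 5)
The paper gives no proof of this statement; it is attributed to Tartar's monograph (p.~82 for the metric topology, Theorem~6.5 for compactness). Your sketch is the standard route of that reference: define $\tau$ as the initial topology from the solution--flux maps $a\mapsto\bigl(u_a(f),\,a\cgrad u_a(f)\bigr)$ into the weak topologies of fixed closed balls, embed into a Tychonoff product, reduce compactness to closedness of the image via the div--curl lemma and oscillating correctors, and deduce metrisability from separability of $\sobH^{-1}(\Omega)$. So your proposal and the cited proof are the same in substance.

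One point in your write-up is wrong, and worth correcting because it would weaken the theorem. In the Hausdorff step you retreat to equivalence classes of coefficients that act identically on $\ran\cgrad$, citing the footnote about non-uniqueness. That footnote concerns $\Gtopo$-convergence (only the solutions $u_n$ are required to converge, not the fluxes $a_n\cgrad u_n$) and the general nonlocal-operator setting, where indeed only $P_{\ran\cgrad}\,a\restrict_{\ran\cgrad}$ is seen by the PDE. It does not apply here: on $M(\alpha,\beta,\Omega)$ the elements are matrix-valued multiplication operators, and the flux condition determines them pointwise. Concretely, if $a\cgrad u=b\cgrad u$ in $\Lp{2}(\Omega)^d$ for every $u\in\cH^1(\Omega)$, take $u=\phi\,x_j$ with $\phi\in\Cc(\Omega)$ and $\phi\equiv 1$ on a ball $B'\subset\subset\Omega$; then $\cgrad u=e_j$ on $B'$, so $(a-b)e_j=0$ a.e.\ on $B'$, and since $j$ and $B'$ were arbitrary, $a=b$ a.e. Hence no quotienting is needed and $\tau$ is Hausdorff on $M(\alpha,\beta,\Omega)$ itself — which is what the theorem asserts. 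Relatedly, in step (5) your statement ``the initial topology with respect to countably many maps into metrisable spaces is metrisable'' requires that the countable subfamily already separates points; the argument just given (applied with a countable dense set of test functions $\phi$) supplies exactly that, so you should invoke it explicitly rather than as an afterthought.
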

In certain cases, we can compute the homogenisation limit explicitly.
\begin{theorem}[{\cite[Theorem~5.12]{CiDo99}}]\label{thm:ExplicitLocalHLimitForStratMedia}
  Let $\Omega \subseteq \R^{d}$ be open and bounded for some $d\in\N$ and $a = (a_{ij})_{i,j=1}^{d} \in M(\alpha,\beta,\Omega)$ given by bounded and measurable
  $\ell_{1}$-periodic functions
  \begin{equation*}
    \hat{a}_{ij} \colon \R \to \R,
  \end{equation*}
  where $a_{ij} \colon \Omega \to \R$ and $a_{ij}(x) = \hat{a}_{ij}(x_{1})$ for $x=(x_{1},\dots,x_{d}) \in \Omega$.
  For $n\in\N$, we define $a_{n}(x) \coloneqq a(nx)$ and the corresponding sequence $(a_{n})_{n\in\N}$. Then for every $f \in \sobH^{-1}(\Omega)$, the solution sequence $(u_{n})_{n\in\N}$ to the problems
  \begin{equation}\label{eq:div-grad-system-in-CiDo99-theorem}
      -\divcon a_{n} \cgrad u_{n} = f
  \end{equation}
  converges weakly to $u_{\hom}$ in $\cH^{1}(\Omega)$, where $u_{\hom}$ is the solution of
  \begin{equation*}
    -\divcon a_{\hom} \cgrad u_{\hom}= f\text{,}
  \end{equation*}
  and the matrix-valued function $a_{\hom} \in M(\alpha,\beta,\Omega)$ is given by
  \begin{alignat*}{3}
    {(a_{\hom})}_{11} &= \frac{1}{\mathrm{m}(\frac{1}{\hat{a}_{11}})}, &&\\
    {(a_{\hom})}_{1j} &= {(a_{\hom})}_{11} \mathrm{m}\Bigl(\frac{\hat{a}_{1j}}{\hat{a}_{11}}\Bigr), && 2 \leq j \leq d \\
    {(a_{\hom})}_{i1} &= {(a_{\hom})}_{11} \mathrm{m}\Bigl(\frac{\hat{a}_{i1}}{\hat{a}_{11}}\Bigr), && 2 \leq i \leq d \\
    {(a_{\hom})}_{ij} &= {(a_{\hom})}_{11} \mathrm{m}\Bigl(\frac{\hat{a}_{i1}}{\hat{a}_{11}}\Bigr) \mathrm{m}\Bigl(\frac{\hat{a}_{1j}}{\hat{a}_{11}}\Bigr) + \mathrm{m}\Bigl(\hat{a}_{ij} - \frac{\hat{a}_{i1}\hat{a}_{1j}}{\hat{a}_{11}}\Bigr), &\quad& 2 \leq i,j \leq d,
  \end{alignat*}
  where $\mathrm{m}(b)$ is the integral mean over $b\in\Lp{1}(0,\ell_{1})$, i.e., $\mathrm{m}(b) \coloneqq \frac{1}{\ell_{1}}\int_{0}^{\ell_{1}} b(z) \dx[z]$.
  Additionally, the sequence $(a_{n} \cgrad u_{n})_{n\in\N}$ converges weakly to $a_{\hom} \cgrad u_{\hom}$ in $\Lp{2}(\Omega)^{d}$, i.e.,
  \begin{equation}\label{eq:hidden-extra-assertion}
    a_{n} \cgrad u_{n} \weakto a_{\hom} \cgrad u_{\hom} \quad\text{in}\quad \Lp{2}(\Omega)^{d}.
  \end{equation}
  All in all, this means $(a_{n})_{n\in\N}$ locally $\Htopo$-converges to $a_{\hom}$.
\end{theorem}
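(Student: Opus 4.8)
\emph{Strategy.} The plan is to prove the asserted local $\Htopo$-convergence of $(a_n)_n$ to the explicitly given $a_{\hom}$; once that is in place, the two displayed weak limits for $(u_n)_n$ and for $(a_n\cgrad u_n)_n$ — the latter being \labelcref{eq:hidden-extra-assertion} — are nothing but the definition of local $\Htopo$-convergence read off at each fixed $f\in\sobH^{-1}(\Omega)$. A limit can be secured for free: the pointwise constraints in \labelcref{eq:definSetMAlphaBetaOmega} are invariant under the rescaling $x\mapsto nx$, so $a_n\in M(\alpha,\beta,\Omega)$ for all $n$, and by \Cref{thm:HConvergenceHausdorffCompactMetrisable} every subsequence of $(a_n)_n$ has a further subsequence locally $\Htopo$-converging to some $\tilde a\in M(\alpha,\beta,\Omega)$. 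Since that topology is Hausdorff, it then suffices to show that every such subsequential limit $\tilde a$ coincides, as an element of $\Lp{\infty}(\Omega)^{d\times d}$, with the matrix $a_{\hom}$ of the statement; the full sequence $\Htopo$-converges to $a_{\hom}$ by the usual subsequence principle, and \emph{en passant} $a_{\hom}\in M(\alpha,\beta,\Omega)$, so the ellipticity bounds on $a_{\hom}$ need not be checked by hand.

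\emph{Explicit correctors.} Let $e_1,\dots,e_d$ be the standard basis of $\R^d$ and write $v=(v_1,v')\in\R\times\R^{d-1}$. Set $\lambda_k\coloneqq \m(\hat a_{1k}/\hat a_{11})\big/\m(1/\hat a_{11})$ for $k=1,\dots,d$ and define the curl-free sequences
\[
  P_n^k \coloneqq \Bigl(\tfrac{\lambda_k-\hat a_{1k}(nx_1)}{\hat a_{11}(nx_1)}+(e_k)_1\Bigr)e_1 + (e_k)' \qquad(n\in\N),
\]
which are gradients of functions of the form $x'\mapsto(e_k)'\cdot x'$ plus an $x_1$-antiderivative. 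The choice of $\lambda_k$ is exactly what forces $P_n^k\weakto e_k$ in $\Lp{2}(\Omega)^d$ via the mean-value property $g(n\argdot)\weakto\m(g)$ for bounded $\ell_1$-periodic $g$; simultaneously the first component of $a_n P_n^k$ is the constant $\lambda_k$, so $\divcon(a_n P_n^k)=0$, and an elementary computation in the block splitting $\R^d=\R e_1\oplus\R^{d-1}$ — eliminating the first entry through the $(1,1)$-coefficient — gives $a_n P_n^k\weakto a_{\hom}e_k$ in $\Lp{2}(\Omega)^d$, the entries of $a_{\hom}$ coming out exactly as the four formulas of the statement. Running the same construction with $a_n\transposed$ in place of $a_n$ produces adjoint correctors $\widetilde P_n^k\weakto e_k$ with $\divcon(a_n\transposed\widetilde P_n^k)=0$ and $a_n\transposed\widetilde P_n^k\weakto (a_{\hom})\transposed e_k$.

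\emph{Identification via compensated compactness.} Fix $f\in\sobH^{-1}(\Omega)$ and pass to the chosen subsequence; then $u_n\weakto u$ in $\cH^1(\Omega)$ (the sequence is bounded there by the coercivity-and-Poincar\'e estimate), $\xi_n\coloneqq a_n\cgrad u_n\weakto\xi\coloneqq\tilde a\cgrad u$ in $\Lp{2}(\Omega)^d$, and $-\divcon\xi=f$. The div--curl lemma of Murat--Tartar (see, e.g., \cite{Ta09,CiDo99}) applied to the pair $(\xi_n,\widetilde P_n^k)$ — the distributional divergence of $\xi_n$ is the fixed element $-f$, and $\widetilde P_n^k$ is curl-free — yields $\xi_n\cdot\widetilde P_n^k\weakto\xi\cdot e_k$ in $\mathcal D'(\Omega)$; applied to $(a_n\transposed\widetilde P_n^k,\cgrad u_n)$ — the former divergence-free, the latter curl-free — together with the pointwise identity $\xi_n\cdot\widetilde P_n^k=\cgrad u_n\cdot(a_n\transposed\widetilde P_n^k)$, it yields $\xi_n\cdot\widetilde P_n^k\weakto\cgrad u\cdot((a_{\hom})\transposed e_k)=(a_{\hom}\cgrad u)\cdot e_k$. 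Comparing the two limits for every $k$ gives $\xi=a_{\hom}\cgrad u$, hence $\tilde a\cgrad u=a_{\hom}\cgrad u$. As $f$ ranges over $\sobH^{-1}(\Omega)$ the solution $u$ ranges over all of $\cH^1(\Omega)$, so $(\tilde a-a_{\hom})\cgrad\varphi=0$ for every $\varphi\in\cH^1(\Omega)$; picking, for arbitrary $v\in\R^d$ and any ball $B\Subset\Omega$, a smooth compactly supported $\varphi$ equal to $v\cdot x$ near $B$ (so $\cgrad\varphi\equiv v$ on $B$) forces $(\tilde a-a_{\hom})v=0$ a.e.\ on $B$, whence $\tilde a=a_{\hom}$ a.e. This is the required identification.

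\emph{Where the work sits.} The result is classical, so there is no conceptual obstacle; the effort concentrates in two bookkeeping points. First, correctly guessing the explicit correctors $P_n^k,\widetilde P_n^k$ and pushing the block-matrix algebra through so that the weak limit of $a_n P_n^k$ reproduces \emph{precisely} the four stated formulas — the $(1,1)$-entry being singled out because it is the coefficient used to eliminate $\partial_1 u_n$. Second, the care that the div--curl pairings use correctors of the \emph{adjoint} $a_n\transposed$ (so that $\cgrad u_n\cdot(a_n\transposed\widetilde P_n^k)$ has one curl-free and one divergence-free factor) and the attendant fact that the homogenised matrix of $a_n\transposed$ is $(a_{\hom})\transposed$. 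Alternatively, one may regard $a$ as periodic on the cell $[0,\ell_1)\times[0,1)^{d-1}$ and outsource the identification to the general periodic-homogenisation theorem of \cite{CiDo99}: since $a$ depends on $y_1$ only, the cell problems reduce to the scalar ODEs $\hat a_{11}w_k'+\hat a_{1k}=\mathrm{const}$ with $w_k$ periodic, and solving them reproduces the same $a_{\hom}$ together with, built in, the weak convergences of $u_n$ and of the fluxes.
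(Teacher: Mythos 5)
Your proof is correct and logically complete. Since the paper itself does not reproduce a proof — the statement is imported verbatim from \cite[Theorem~5.12]{CiDo99}, and the only original observation the paper adds is the remark that the extra assertion \labelcref{eq:hidden-extra-assertion} is established in the course of \cite[Proof of Thm.~5.10]{CiDo99} — the relevant comparison is between your reconstruction and that cited proof. \cite{CiDo99} derives the stratified formulas as a corollary of the general periodic homogenisation theorem, by solving the cell problems explicitly (you sketch exactly this route in your closing paragraph as an alternative). Your main argument instead bypasses the cell problem: you invoke the metrisable compactness of $M(\alpha,\beta,\Omega)$ (the paper's \Cref{thm:HConvergenceHausdorffCompactMetrisable}) to extract subsequential $\Htopo$-limits, hand-build the oscillating test functions $P_n^k,\widetilde P_n^k$ adapted to the one-directional oscillation, and identify every subsequential limit with the stated $a_{\hom}$ via two div--curl pairings, using Hausdorffness to upgrade to convergence of the full sequence. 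This is Tartar's method specialised in a way that turns the cell ODE into a one-line formula, which is arguably cleaner for the stratified case, and it has the side benefit of delivering $a_{\hom}\in M(\alpha,\beta,\Omega)$ for free. I checked the algebra: the choice $\lambda_k=\m(\hat a_{1k}/\hat a_{11})/\m(1/\hat a_{11})=(a_{\hom})_{1k}$ makes the first component of $a_nP_n^k$ constant, and the mean-value weak limit of $(a_nP_n^k)_i$ for $i\ge 2$ reproduces exactly the displayed $(a_{\hom})_{ik}$; likewise the $\Htopo$-limit of $a_n\transposed$ equals $(a_{\hom})\transposed$, which your transposed pairing relies on. The hypotheses of the div--curl lemma are met ($\div\xi_n=-f$ fixed, $\widetilde P_n^k$ curl-free, $a_n\transposed\widetilde P_n^k$ divergence-free because its first component is constant and the rest depend only on $x_1$), and the final localisation argument forcing $\tilde a=a_{\hom}$ a.e.\ is standard. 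So: same conclusion, genuinely different but equally classical route, and both are legitimate readings of the Murat--Tartar machinery.
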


\begin{remark}
  Note the following:
  \begin{itemize}
    \item The last claim~\eqref{eq:hidden-extra-assertion} is not part of the original statement, but it is shown in the course of proof, see \cite[Proof of Thm.~5.10]{CiDo99}.

    \item Furthermore, \cite{CiDo99} defines $M(\alpha,\beta,\Omega)$ without the real part operator, since only real vector spaces are treated. However, if $a$ is an $\R^{d\times d}$-valued function, then both approaches to $M(\alpha,\beta,\Omega)$ coincide (theirs and ours), see \Cref{le:equivalent-M-for-real-matrices}.

    \item The previous result is the periodic case of the more general \cite[Lemma 5.3]{Ta09},
     where the sequence $(a_{n})_{n\in\N}$ depends on the first variable only, and the entries $(a_{n})_{ij}$ satisfy certain $\Lp{\infty}$-$\Lp{1}$-weak-$\ast$ convergence conditions (see \cite[(5.6)--(5.9)]{Ta09}) that, due to \Cref{thm:WeakStarLimitPerMultOp}, exactly yield the above limit in the periodic case.\myqedhere%
  \end{itemize}
\end{remark}

\subsection{Nonlocal $\Htopo$-Convergence}\label{subsectionNonlocalHConv}

In~\cite{Wa18}, the desire to treat general (possibly nonlocal) coefficients instead of only multiplication operators, and to treat a whole class of (possibly time-dependent) systems at
once led to the following operator-theoretic approach to homogenisation.

For the rest of this section, let $\mathcal{H}$ be a separable Hilbert space that can be decomposed orthogonally into two closed subspaces
$\mathcal{H}=\mathcal{H}_{0}\oplus \mathcal{H}_{1}$. Hence, any $a\in\Lb(\mathcal{H})$ can equivalently be written as $\begin{pmatrix}a_{00}&a_{01}\\
a_{10}&a_{11}\end{pmatrix}$ where $a_{ij}\in\Lb(\mathcal{H}_j,\mathcal{H}_i)$.

\begin{definition}[{cf.~\cite[Theorem~4.1]{Wa18}}]\label{def:nonlocal-H-convergence}
  Consider
  \begin{equation*}
    \mathcal{M}(\mathcal{H}_0,\mathcal{H}_1)\coloneqq \dset{a\in\Lb(\mathcal{H})}{a_{00}^{-1}\in\Lb(\mathcal{H}_0)\text{ and }a^{-1}\in\Lb(\mathcal{H})}\text{.}
  \end{equation*}
  A sequence $(a_n)_{n\in\N}$ from $\mathcal{M}(\mathcal{H}_0,\mathcal{H}_1)$ is said to \emph{nonlocally $\Htopo$-converge} to an $a\in\mathcal{M}(\mathcal{H}_0,\mathcal{H}_1)$
  if and only if $a_{n,00}^{-1}$, $a_{n,10}a_{n,00}^{-1}$, $a_{n,00}^{-1}a_{n,01}$, and $a_{n,11}-a_{n,10}a_{n,00}^{-1}a_{n,01}$ converge pointwise weakly
  (i.e., with respect to the weak operator topology) to $a_{00}^{-1}$, $a_{10}a_{00}^{-1}$, $a_{00}^{-1}a_{01}$, and $a_{11}-a_{10}a_{00}^{-1}a_{01}$ respectively.
\end{definition}

\begin{theorem}[{cf.~\cite[Chapter~5]{Wa18}}]\label{thm:nonlocal-H-convergence-comes-from-a-topology}
  There exists a topology $\tau(\mathcal{H}_0,\mathcal{H}_1)$ on $\mathcal{M}(\mathcal{H}_0,\mathcal{H}_1)$, called \emph{nonlocal $\Htopo$-topology}
  or \emph{Schur topology}, that induces nonlocal $\Htopo$-convergence and renders the space $(\mathcal{M}(\mathcal{H}_0,\mathcal{H}_1),\tau(\mathcal{H}_0,\mathcal{H}_1))$
  Hausdorff. Furthermore, regard
  \begin{equation*}
    \mathcal{M}(\gamma, \mathcal{H}_0,\mathcal{H}_1)
    \coloneqq
    \begin{aligned}[t]
      \left\{\vphantom{\frac{1}{\gamma_{1}}}a \in \mathcal{M}(\mathcal{H}_{0},\mathcal{H}_{1})
      \,\right|\,&
                   \Re a_{00} \geq \gamma_{00}, \Re a_{00}^{-1} \geq \frac{1}{\gamma_{11}}, \\
                 &\quad\norm{a_{10} a_{00}^{-1}} \leq \gamma_{10}, \norm{a_{00}^{-1}a_{01}} \leq \gamma_{01}, \\
                 &\qquad\Re (a_{11} - a_{10}a_{00}^{-1}a_{01})^{-1} \geq \frac{1}{\gamma_{11}}, \\
                 &{\left.\vphantom{\frac{1}{\gamma_{1}}}\quad\qquad\mspace{5mu}\Re (a_{11} - a_{10}a_{00}^{-1}a_{01}) \geq \gamma_{00}
                   \right\}}\text{,}
    \end{aligned}
  \end{equation*}
  where $\gamma=\begin{pmatrix}\gamma_{00}&\gamma_{01}\\\gamma_{10}&\gamma_{11}\end{pmatrix}\in (0,\infty)^{2\times 2}$.
  Then, $(\mathcal{M}(\gamma,\mathcal{H}_0,\mathcal{H}_1),\tau(\mathcal{H}_0,\mathcal{H}_1))$ is compact and metrisable.
\end{theorem}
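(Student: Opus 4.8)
The plan is to obtain the topology, and all of its properties, from one auxiliary map. Write $\mathcal{Y}$ for the product $\Lb(\mathcal{H}_0)\times\Lb(\mathcal{H}_0,\mathcal{H}_1)\times\Lb(\mathcal{H}_1,\mathcal{H}_0)\times\Lb(\mathcal{H}_1)$ and define the \emph{Schur map}
\begin{equation*}
  \Phi\colon\mathcal{M}(\mathcal{H}_0,\mathcal{H}_1)\to\mathcal{Y},\qquad
  \Phi(a)\coloneqq\bigl(a_{00}^{-1},\, a_{10}a_{00}^{-1},\, a_{00}^{-1}a_{01},\, a_{11}-a_{10}a_{00}^{-1}a_{01}\bigr),
\end{equation*}
which is well defined since $a_{00}^{-1}\in\Lb(\mathcal{H}_0)$ for $a\in\mathcal{M}(\mathcal{H}_0,\mathcal{H}_1)$. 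The first thing I would verify is that $\Phi$ is \emph{injective}: from $(p,q,r,s)=\Phi(a)$ one recovers $a$ via $a_{00}=p^{-1}$, $a_{10}=qp^{-1}$, $a_{01}=p^{-1}r$, $a_{11}=s+qp^{-1}r$ (the quadruple $\Phi(a)$ encodes the block $LDU$-factorisation of $a$, from which $a$ is recovered by multiplying the factors out; in particular $\Phi$ is, up to a harmless sign on the $01$-entry, an involution on block operators with continuously invertible upper-left corner). Now endow $\mathcal{Y}$ with the product of the weak operator topologies on its four factors and let $\tau(\mathcal{H}_0,\mathcal{H}_1)$ be the initial topology on $\mathcal{M}(\mathcal{H}_0,\mathcal{H}_1)$ with respect to $\Phi$. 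By the universal property of the initial topology, a sequence (indeed a net) converges in $\tau(\mathcal{H}_0,\mathcal{H}_1)$ exactly when all four entries of its $\Phi$-image converge in the weak operator topology, that is, exactly when it nonlocally $\Htopo$-converges in the sense of \Cref{def:nonlocal-H-convergence}; and since the weak operator topology is Hausdorff, so is $\mathcal{Y}$, hence so is the pullback of $\mathcal{Y}$ under the injective map $\Phi$. This settles the first assertion.

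For the second assertion I would first read off the a priori bounds hidden in the definition of $\mathcal{M}(\gamma,\mathcal{H}_0,\mathcal{H}_1)$: the coercivity $\Re a_{00}\geq\gamma_{00}$ gives $\norm{a_{00}^{-1}}\leq 1/\gamma_{00}$, the coercivity $\Re(a_{11}-a_{10}a_{00}^{-1}a_{01})^{-1}\geq 1/\gamma_{11}$ gives $\norm{a_{11}-a_{10}a_{00}^{-1}a_{01}}\leq\gamma_{11}$, and $\norm{a_{10}a_{00}^{-1}}\leq\gamma_{10}$, $\norm{a_{00}^{-1}a_{01}}\leq\gamma_{01}$ are assumed outright. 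Hence $\Phi$ maps $\mathcal{M}(\gamma,\mathcal{H}_0,\mathcal{H}_1)$ into a product $B$ of four closed operator balls in the factors of $\mathcal{Y}$. Each such ball is weak-operator compact by the usual Tychonoff/Banach--Alaoglu argument (identify operators with the family of their matrix coefficients inside a product of closed discs), so $B$ is compact; and since $\mathcal{H}$ — hence $\mathcal{H}_0$ and $\mathcal{H}_1$ — is separable, the weak operator topology on each ball, and therefore on $B$, is metrisable. Because $\Phi$ is a topological embedding (an injection whose domain carries the initial topology with respect to it), it now suffices to show that $\Phi(\mathcal{M}(\gamma,\mathcal{H}_0,\mathcal{H}_1))$ is \emph{closed} in $B$; then it, and hence $\mathcal{M}(\gamma,\mathcal{H}_0,\mathcal{H}_1)$, is compact and metrisable.

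To prove closedness, take $a_n\in\mathcal{M}(\gamma,\mathcal{H}_0,\mathcal{H}_1)$ with $\Phi(a_n)=(p_n,q_n,r_n,s_n)\to(p,q,r,s)$ in $B$. Testing against a fixed vector, the estimates $\Re p_n=\Re a_{n,00}^{-1}\geq 1/\gamma_{11}$ and $\Re s_n=\Re(a_{n,11}-a_{n,10}a_{n,00}^{-1}a_{n,01})\geq\gamma_{00}$ pass to the limit, so $p$ and $s$ are continuously invertible; the reconstruction formula then defines an operator $a$, a direct computation gives $\Phi(a)=(p,q,r,s)$, and by the Schur-complement invertibility criterion $a\in\mathcal{M}(\mathcal{H}_0,\mathcal{H}_1)$. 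It remains to check the six inequalities defining $\mathcal{M}(\gamma,\mathcal{H}_0,\mathcal{H}_1)$ for $a$. Four of them — the norm bounds $\norm{a_{10}a_{00}^{-1}}\leq\gamma_{10}$, $\norm{a_{00}^{-1}a_{01}}\leq\gamma_{01}$ and the lower bounds $\Re a_{00}^{-1}\geq 1/\gamma_{11}$, $\Re(a_{11}-a_{10}a_{00}^{-1}a_{01})\geq\gamma_{00}$ — are stable under weak-operator limits (the norm bounds by weak lower semicontinuity of the norm, the lower bounds by testing against a fixed vector). The two remaining ones, $\Re a_{00}=\Re p^{-1}\geq\gamma_{00}$ and $\Re(a_{11}-a_{10}a_{00}^{-1}a_{01})^{-1}=\Re s^{-1}\geq 1/\gamma_{11}$, are the heart of the matter: a coercivity bound must be carried onto the \emph{inverse} of a weak-operator limit, and inversion is not weak-operator continuous. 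For this I would use: if $T_n\to T$ in the weak operator topology with $\Re T_n\geq c>0$ and $\Re T_n^{-1}\geq\delta>0$ for all $n$, then $T$ is invertible and $\Re T^{-1}\geq\delta$. Indeed $\Re T\geq c$ passes to the limit, so $T^{-1}$ exists; substituting $k=T_n g$ into $\Re\scprod{k}{T_n^{-1}k}\geq\delta\norm{k}^2$ gives $\Re\scprod{g}{T_n g}\geq\delta\norm{T_n g}^2$, and since $T_n g\weakto Tg$ we have $\norm{Tg}\leq\liminf_n\norm{T_n g}$, so passing to $\limsup_n$ yields $\Re\scprod{g}{Tg}\geq\delta\norm{Tg}^2$, which after the substitution $h=Tg$ is precisely $\Re\scprod{h}{T^{-1}h}\geq\delta\norm{h}^2$. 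Applying this with $(T_n,c,\delta)=(p_n,1/\gamma_{11},\gamma_{00})$ and with $(T_n,c,\delta)=(s_n,\gamma_{00},1/\gamma_{11})$ supplies the two missing inequalities and completes the argument. I expect precisely this last point — transporting positive-definiteness onto the inverse of a weak-operator limit — to be the main obstacle; everything else is the bookkeeping of the Schur reconstruction together with soft functional analysis (initial topologies, weak-operator compactness of balls, metrisability on bounded sets under separability).
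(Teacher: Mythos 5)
Your proof is correct, and it follows exactly the construction that the paper cites rather than proves: \Cref{thm:nonlocal-H-convergence-comes-from-a-topology} is attributed to \cite[Chapter~5]{Wa18}, where the Schur topology is likewise defined as the initial topology on $\mathcal{M}(\mathcal{H}_0,\mathcal{H}_1)$ with respect to the (injective) Schur map $a\mapsto(a_{00}^{-1},a_{10}a_{00}^{-1},a_{00}^{-1}a_{01},a_{11}-a_{10}a_{00}^{-1}a_{01})$ landing in a product of weak operator topologies, and compactness/metrisability of $\mathcal{M}(\gamma,\mathcal{H}_0,\mathcal{H}_1)$ is obtained by checking that its image is a closed subset of a WOT-compact, metrisable product of balls. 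You correctly identify and handle the one genuinely delicate step, namely transporting the coercivity $\Re T_n^{-1}\geq\delta$ onto the inverse of the WOT limit via the substitution $k=T_ng$ together with weak lower semicontinuity of the norm.
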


In the following, if the decomposition of $\mathcal{H}$ into $\mathcal{H}_0$ and $\mathcal{H}_1$ is clear from the context, we will simply write $  \mathcal{M}(\gamma)$ instead of $  \mathcal{M}(\gamma, \mathcal{H}_0,\mathcal{H}_1)$.
As one would expect, nonlocal $\Htopo$-convergence reasonably generalises local $\Htopo$-convergence. For the matter of simplicity, we will only treat topologically trivial
domains (in 3D: simply connected with connected complement). For the general case, see~\cite{Wa22}.

\begin{remark}[{Helmholtz decomposition, \cite[Examples~2.3 and~2.4]{Wa18}}]\label{re:HelmhDecompTopoTrivial}
For a bounded simply connected weak Lipschitz domain $\Omega\subseteq\R^3$ with connected complement and continuous boundary, we have the following orthogonal decompositions into closed subspaces
\begin{equation*}
\Lp{2}(\Omega)^3=\ran(\cgrad)\oplus \ran(\curl)= \ran(\grad)\oplus \ran(\ccurl)\text{,}
\end{equation*}
where $\Omega^{\complement}$ being connected implies the first decomposition, and $\Omega$ being simply connected the second one.
In other words, we have $\ran(\grad)=\ker(\curl)$, $\ran(\cgrad)=\ker(\ccurl)$, $\ker(\div)=\ran(\curl)$ and $\ker(\cdiv)=\ran(\ccurl)$.
Furthermore, the Picard-Weber-Weck selection theorem holds (\cite{Pi84}): $\dom(\div)\cap \dom(\ccurl)$ and $\dom(\cdiv)\cap\dom(\curl)$,
both endowed with the inner product $\scprod{\argdot}{\argdot}_{\Lp{2}(\Omega)^3}+\scprod{\div\argdot}{\div\argdot}_{\Lp{2}(\Omega)}+\scprod{\curl\argdot}{\curl\argdot}_{\Lp{2}(\Omega)^{3}}$,
are each compactly embedded into $\Lp{2}(\Omega)^3$.
\end{remark}

\begin{theorem}[{\cite[Theorem~5.11]{Wa18}}]\label{thm:H-converges-equivalent-to-Schur-convergence}
Let $\Omega\subseteq\R^3$ be a bounded weak Lipschitz domain with connected complement. For $0<\alpha<\beta$, a  sequence $(a_n)_{n\in\N}$ from
 $M(\alpha,\beta,\Omega)$ locally $\Htopo$-converges to some $a\in M(\alpha,\beta,\Omega)$ if and only if it
 $\tau(\ran(\cgrad),\ran(\curl))$-converges to $a$.
\end{theorem}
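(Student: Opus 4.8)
The plan is to translate local $\Htopo$-convergence into the operator language of \Cref{def:nonlocal-H-convergence} using the Helmholtz decomposition $\mathcal{H}\coloneqq\Lp{2}(\Omega)^{3}=\mathcal{H}_{0}\oplus\mathcal{H}_{1}$ with $\mathcal{H}_{0}=\ran(\cgrad)=\ker(\ccurl)$, $\mathcal{H}_{1}=\ran(\curl)=\ker(\div)$ (available since $\Omega^{\complement}$ is connected, cf.~\Cref{re:HelmhDecompTopoTrivial}), together with the standing isomorphisms $\cgrad\colon\cH^{1}(\Omega)\to\mathcal{H}_{0}$ (Poincaré) and $-\divcon\cgrad\colon\cH^{1}(\Omega)\to\sobH^{-1}(\Omega)$. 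Write $a=(a_{ij})_{i,j\in\{0,1\}}$ for the block form of $a\in\Lb(\mathcal{H})$ and $P_{0},P_{1}$ for the orthogonal projections. First I would observe that, for $f\in\sobH^{-1}(\Omega)$, the $\cH^{1}$-solution $u$ of $-\divcon a\cgrad u=f$ satisfies, with $g\coloneqq\cgrad u\in\mathcal{H}_{0}$ and $q\coloneqq ag$, that $P_{0}q=h$, where $h\in\mathcal{H}_{0}$ is the ($a$-independent) solution of $-\divcon h=f$ --- this is because $\divcon$ annihilates $\mathcal{H}_{1}$ --- whence $g=a_{00}^{-1}h$ and $P_{1}q=a_{10}a_{00}^{-1}h$. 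Since $h$ ranges over all of $\mathcal{H}_{0}$ as $f$ ranges over $\sobH^{-1}(\Omega)$, and since the defining inequalities of $M(\alpha,\beta,\Omega)$ yield the uniform bounds $\norm{a_{n,00}^{-1}}\le 1/\alpha$ and $\norm{a_{n,10}a_{n,00}^{-1}}\le\beta/\alpha$, this shows that local $\Htopo$-convergence of $(a_{n})_{n}$ to $a$ is \emph{equivalent} to the two weak operator convergences $a_{n,00}^{-1}\to a_{00}^{-1}$ and $a_{n,10}a_{n,00}^{-1}\to a_{10}a_{00}^{-1}$, i.e.~to the first two of the four convergences in \Cref{def:nonlocal-H-convergence}. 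Read backwards, this already settles ``$\tau(\ran(\cgrad),\ran(\curl))$-convergence $\Rightarrow$ local $\Htopo$-convergence'': unwinding the reduction gives $\cgrad u_{n}=a_{n,00}^{-1}h\weakto a_{00}^{-1}h=\cgrad u$ (hence $u_{n}\weakto u$ in $\cH^{1}(\Omega)$) and $a_{n}\cgrad u_{n}=q_{n}\weakto q=a\cgrad u$ in $\mathcal{H}$.

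For the converse I still have to recover $a_{n,00}^{-1}a_{n,01}\to a_{00}^{-1}a_{01}$ and $a_{n,11}-a_{n,10}a_{n,00}^{-1}a_{n,01}\to a_{11}-a_{10}a_{00}^{-1}a_{01}$ from local $\Htopo$-convergence. For the first of these I would invoke the classical transposition invariance of local $\Htopo$-convergence (Murat--Tartar, see \cite[Ch.~6]{Ta09}, carried over to the complex setting): $(a_{n})_{n}$ locally $\Htopo$-converges to $a$ iff $(a_{n}^{\ast})_{n}$ locally $\Htopo$-converges to $a^{\ast}$, and $a^{\ast},a_{n}^{\ast}\in M(\alpha,\beta,\Omega)$. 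Applying the reduction above to $(a_{n}^{\ast})_{n}$ and using $(a^{\ast})_{00}=a_{00}^{\ast}$, $(a^{\ast})_{10}=a_{01}^{\ast}$, $(T^{\ast})^{-1}=(T^{-1})^{\ast}$, and the weak-operator-continuity of adjunction, the second convergence for $a^{\ast}$ becomes $(a_{n,00}^{-1}a_{n,01})^{\ast}\to(a_{00}^{-1}a_{01})^{\ast}$, which is the desired convergence for $a$.

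The remaining, and in my view hardest, point is the Schur complement $s_{n}\coloneqq a_{n,11}-a_{n,10}a_{n,00}^{-1}a_{n,01}$. Here the plan is, for $g_{1}$ in a dense subset of $\mathcal{H}_{1}$ (say $g_{1}=\curl\psi$ with $\psi\in\Cc(\Omega)^{3}$, so that $\curl g_{1}$ is classically meaningful), to build a witnessing field sequence $g_{n}\coloneqq g_{1}-a_{n,00}^{-1}a_{n,01}g_{1}\in\mathcal{H}$. Then $P_{1}g_{n}=g_{1}$, and by the previous paragraph $g_{n}\weakto g\coloneqq g_{1}-a_{00}^{-1}a_{01}g_{1}$; a direct block computation gives $a_{n}g_{n}=s_{n}g_{1}\in\mathcal{H}_{1}$ and likewise $ag=sg_{1}$ with $s$ the limiting Schur complement. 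Because $P_{0}g_{n}\in\ran(\cgrad)\subseteq\ker(\curl)$ and $a_{n}g_{n}\in\mathcal{H}_{1}=\ker(\div)$, the sequences $\curl g_{n}=\curl g_{1}$ and $\div(a_{n}g_{n})=0$ are constant in $n$, hence trivially relatively compact in $\sobH^{-1}_{\mathrm{loc}}(\Omega)$. Now I invoke the compensated-compactness (div--curl) characterisation of local $\Htopo$-convergence --- equivalently, the classical flux--field (``dual problem'') duality, under which the $\divcon$-form equation for $a_{n}$ corresponds to the $\curl$-form equation for $a_{n}^{-1}$; in the $\Lp{2}(\Omega)^{3}$-setting this rests on the div--curl lemma and the Weber--Weck selection theorem recalled in \Cref{re:HelmhDecompTopoTrivial} --- to conclude $a_{n}g_{n}\weakto ag$, that is $s_{n}g_{1}\weakto sg_{1}$; by density and the uniform bound $\norm{s_{n}}\le\beta$ this gives $s_{n}\to s$ in the weak operator topology, the fourth convergence.

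Putting the three steps together proves the equivalence, using that on $M(\alpha,\beta,\Omega)\subseteq\mathcal{M}(\gamma)$ (for suitable $\gamma$) the fourfold weak operator convergence of \Cref{def:nonlocal-H-convergence} coincides with $\tau(\ran(\cgrad),\ran(\curl))$-convergence by \Cref{thm:nonlocal-H-convergence-comes-from-a-topology}. I expect the genuine difficulty to sit entirely in the third step: setting up the compensated-compactness form of local $\Htopo$-convergence in the present $\Lp{2}(\Omega)^{3}$-framework and verifying that the mixed sequence $g_{n}$ --- which is neither a pure gradient nor a pure curl --- is admissible for it. A secondary technical nuisance is the topology of $\Omega$ (the identifications $\ran(\cgrad)=\ker(\ccurl)$, $\ran(\curl)=\ker(\div)$, $\ran(\cgrad)\subseteq\ker(\curl)$, and the density of smooth curls in $\mathcal{H}_{1}$): for merely connected-complement domains one argues as in \Cref{re:HelmhDecompTopoTrivial} or reduces to the topologically trivial case, the general situation being covered by \cite{Wa22}.
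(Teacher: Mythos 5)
The paper itself does not prove this statement; it cites \cite[Theorem~5.11]{Wa18}, whose proof (as the present paper notes while proving \Cref{thm:H-converges-equivalent-to-Schur-convergence-2D}) is ``short'' and hinges on the abstract Hilbert-space level result \cite[Theorem~4.10]{Wa18}. Your reconstruction is a genuinely different, more classical route, built from Murat--Tartar machinery. The core reduction in your first step is correct and quite clean: writing $u$'s equation as $a_{00}\cgrad u = h$ with $h\in\mathcal{H}_0$ uniquely determined by $f$, because $\divcon$ annihilates $\ran(\curl)$, and then identifying $\cgrad u_n$ and $a_n\cgrad u_n$ with $a_{n,00}^{-1}h$ and $h+a_{n,10}a_{n,00}^{-1}h$, does show that local $\Htopo$-convergence is \emph{equivalent} to WOT-convergence of $a_{n,00}^{-1}$ and $a_{n,10}a_{n,00}^{-1}$ (the uniform bounds from $M(\alpha,\beta,\Omega)$ upgrade pointwise weak convergence to WOT). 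The transposition step for $a_{n,00}^{-1}a_{n,01}$ is fine, provided one checks that local $\Htopo$-convergence is stable under adjoints in the complex, $\divcon$-valued setting (a standard but not entirely trivial verification).

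The one genuine gap sits in step 3, exactly where you anticipated trouble, but not quite where you placed it. You take $g_1$ in ``a dense subset of $\mathcal{H}_1$'' of the form $g_1=\curl\psi$, $\psi\in\Cc(\Omega)^3$. The closure of $\{\curl\psi : \psi\in\Cc(\Omega)^3\}$ is $\ran(\ccurl)=\ran(\overline{\curl\restriction_{\Cc}})$, which is a \emph{closed proper} subspace of $\mathcal{H}_1=\ran(\curl)=\ker(\div)$ whenever $\Omega$ fails to be simply connected --- and the hypotheses here only require $\Omega^{\complement}$ connected. So the density claim is false in precisely the topologically nontrivial case the theorem allows. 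Fortunately the restriction is unnecessary: for \emph{any} $g_1\in\mathcal{H}_1$ one has $\curl g_n=\curl g_1\in\sobH^{-1}_{\mathrm{loc}}(\Omega)$ (a constant sequence, hence trivially precompact) because $P_0 g_n\in\ran(\cgrad)\subseteq\ker(\curl)$, and $\div(a_n g_n)=0$ because $a_n g_n=s_n g_1\in\ker(\div)$; no classical regularity of $g_1$ is needed to invoke the div--curl/flux-convergence implication (\cite[Lemma~10.3]{Ta09} and its local variants). With $g_1$ running over all of $\mathcal{H}_1$ and the uniform bound $\norm{s_n}\leq\beta+\beta^2/\alpha$, the Schur-complement convergence follows. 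Modulo this correction and a careful citation of the div--curl characterisation, your argument stands; compared with \cite{Wa18}, you pay with three separate PDE arguments (Dirichlet reduction, transposition, compensated compactness) for what the abstract approach obtains in one stroke, but you gain a self-contained and elementary derivation from classical $\Htopo$-convergence theory.
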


Note that the previous result can be extended to $\Omega \subseteq \R^{d}$ for $d > 3$. The key is to find the corresponding Helmholtz/Hodge decomposition, see, e.g., \cite[Theorem 3.6 \& Theorem 2.2]{Wa18b}.

\subsection{Homogenisation Theory for Evolutionary Equations}

In the remaining section, assume that $\mathcal{H}$ is a separable Hilbert space and $A\colon\dom(A)\subseteq\mathcal{H}\to\mathcal{H}$ skew-selfadjoint. Additionally,
let the Hilbert space $\dom(A)\cap (\ker A)^{\perp}$ (with respect to $\scprod{\argdot}{\argdot}_{\mathcal{H}}+\scprod{A\argdot}{A\argdot}_{\mathcal{H}}$) be compactly embedded into $\mathcal{H}$.
This implies $\ker(A)^{\perp}=\ran A$ (see, e.g.,~\cite[Lemma~4.1]{ElGoWa19} or the FA-Toolbox in \cite{PaZu23}), i.e.,
$\mathcal{H}=\mathcal{H}_{0}\oplus \mathcal{H}_{1}$ in the sense of \Cref{subsectionNonlocalHConv} with the Hilbert spaces $\mathcal{H}_{0}\coloneqq\ker(A)$ and $\mathcal{H}_{1}\coloneqq\ran(A)$.
Looking at the proof of~\cite[Lemma~6.4]{BuSkWa24}, we see that actually the following stronger statement was shown.
\begin{lemma}\label{lemma-WOTSOT-Theorie-Paper}
  Let $\gamma\in (0,\infty)^{2\times 2}$ and consider a sequence $(T_{n})_{n\in\N}$ from $\mathcal{M}(\gamma)$ that $\tau(\ker(A),\ran(A))$-converges to some $T\in\mathcal{M}(\gamma)$. Then,
  $(T_{n}+A)^{-1},(T+A)^{-1}\in\Lb (\mathcal{H})$, for all $n\in \N$, and for $\varphi\in\mathcal{H}$,
  \begin{itemize}
          \item the $\ker(A)$-component of $(T_{n}+A)^{-1}\varphi$ weakly converges to $(T+A)^{-1}\varphi$,
          \item the $\ran(A)$-component of $(T_{n}+A)^{-1}\varphi$ converges to $(T+A)^{-1}\varphi$.
  \end{itemize}
\end{lemma}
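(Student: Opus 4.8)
The plan is to push everything through the Schur-complement (block‑triangular) decomposition of $T_n+A$ and then to feed the four weakly convergent ``Schur data'' provided by nonlocal $\Htopo$-convergence into the resulting solution formulas, upgrading weak to strong convergence on the $\ran(A)$-part by means of the compact embedding $\dom(A)\cap(\ker A)^{\perp}\cpt\mathcal{H}$. First I would record the block structure of $A$: since $\dom(A)\cap(\ker A)^{\perp}\cpt\mathcal{H}$, one has $(\ker A)^{\perp}=\ran(A)=\mathcal{H}_1$, and as $A$ is skew-selfadjoint it annihilates $\mathcal{H}_0=\ker(A)$ and maps $\dom(A)\cap\mathcal{H}_1$ into $\mathcal{H}_1$; hence $A=\begin{pmatrix}0&0\\0&A_{11}\end{pmatrix}$ with $A_{11}$ skew-selfadjoint on $\mathcal{H}_1$.

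Next I would establish invertibility and the solution formulas. For $a\in\mathcal{M}(\gamma)$ put $S(a)\coloneqq a_{11}-a_{10}a_{00}^{-1}a_{01}$. From $\Re S(a)\geq\gamma_{00}$ and $\Re\scprod{v}{A_{11}v}=0$ for $v\in\dom(A_{11})$ one gets $\Re\scprod{v}{(S(a)+A_{11})v}\geq\gamma_{00}\norm{v}^{2}$ (and likewise for the adjoint $S(a)^{\ast}-A_{11}$), so $S(a)+A_{11}$ is boundedly invertible on $\mathcal{H}_1$ with $\norm{(S(a)+A_{11})^{-1}}\leq 1/\gamma_{00}$; together with $a_{00}^{-1}\in\Lb(\mathcal{H}_0)$ and the block-triangular factorisation of $a+A$ this gives $(a+A)^{-1}\in\Lb(\mathcal{H})$, in particular $(T_n+A)^{-1},(T+A)^{-1}\in\Lb(\mathcal{H})$. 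Solving $(T_n+A)u_n=\varphi$ in blocks, with $\varphi=\varphi_0+\varphi_1$ along $\mathcal{H}=\mathcal{H}_0\oplus\mathcal{H}_1$, yields
\begin{align*}
 u_{n,1}&=\bigl(S(T_n)+A_{11}\bigr)^{-1}\bigl(\varphi_1-T_{n,10}T_{n,00}^{-1}\varphi_0\bigr),\\
 u_{n,0}&=T_{n,00}^{-1}\varphi_0-T_{n,00}^{-1}T_{n,01}\,u_{n,1},
\end{align*}
and analogously for $T$, with limit candidate $u=u_0+u_1$.

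For the $\ran(A)$-component I would argue by compactness and subsequences. By \Cref{thm:nonlocal-H-convergence-comes-from-a-topology}, $\tau(\ker A,\ran A)$-convergence means $T_{n,00}^{-1}\to T_{00}^{-1}$, $T_{n,10}T_{n,00}^{-1}\to T_{10}T_{00}^{-1}$, $T_{n,00}^{-1}T_{n,01}\to T_{00}^{-1}T_{01}$, $S(T_n)\to S(T)$ in the weak operator topology, while $\mathcal{M}(\gamma)$ makes all four families uniformly bounded (e.g.\ $\norm{S(T_n)}\leq\gamma_{11}$, $\norm{T_{n,00}^{-1}T_{n,01}}\leq\gamma_{01}$). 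With $\psi_n\coloneqq\varphi_1-T_{n,10}T_{n,00}^{-1}\varphi_0$ one has $\psi_n\weakto\psi\coloneqq\varphi_1-T_{10}T_{00}^{-1}\varphi_0$, so $(\psi_n)_n$ is bounded; then $\norm{u_{n,1}}\leq\gamma_{00}^{-1}\norm{\psi_n}$ and $\norm{A_{11}u_{n,1}}=\norm{\psi_n-S(T_n)u_{n,1}}\leq\norm{\psi_n}+\gamma_{11}\norm{u_{n,1}}$ show that $(u_{n,1})_n$ is bounded in the graph norm of $A_{11}$, i.e.\ in $\dom(A)\cap(\ker A)^{\perp}$. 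By the compact embedding, every subsequence has a sub-subsequence along which $u_{n,1}\to v$ strongly in $\mathcal{H}_1$; along it, $S(T_n)u_{n,1}\weakto S(T)v$ (WOT operators of bounded norm against a strongly convergent vector), hence $A_{11}u_{n,1}=\psi_n-S(T_n)u_{n,1}\weakto\psi-S(T)v$ while $u_{n,1}\weakto v$, and weak closedness of the graph of the closed operator $A_{11}$ forces $v\in\dom(A_{11})$ with $(S(T)+A_{11})v=\psi$, i.e.\ $v=u_1$. As the limit is independent of the subsequence, $u_{n,1}\to u_1$ strongly. The $\ker(A)$-component is then immediate from the second formula: $T_{n,00}^{-1}\varphi_0\weakto T_{00}^{-1}\varphi_0$, and (again a WOT-bounded operator against the strongly convergent $u_{n,1}$) $T_{n,00}^{-1}T_{n,01}u_{n,1}\weakto T_{00}^{-1}T_{01}u_1$, so $u_{n,0}\weakto u_0$.

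I expect the crux to be the strong convergence on the $\ran(A)$-component: only weak (WOT) control of the coefficient data is available, so the compact embedding $\dom(A)\cap(\ker A)^{\perp}\cpt\mathcal{H}$ is indispensable for turning the a~priori bounded family $(u_{n,1})_n$ into a strongly convergent one, after which the limit must be pinned down carefully via the ``WOT operator against a strongly convergent vector yields weak convergence'' mechanism together with weak closedness of the graph of $A_{11}$. By contrast, the $\ker(A)$-part can only inherit weak convergence, since $T_{n,00}^{-1}$ and $T_{n,00}^{-1}T_{n,01}$ converge merely in the weak operator topology.
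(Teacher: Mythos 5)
Your proof is correct and, as far as can be judged, reconstructs exactly the argument the paper is alluding to: the paper does not prove the lemma itself but appeals to the proof of \cite[Lemma~6.4]{BuSkWa24}, and the refinement (strong convergence on the $\ran(A)$-part) is precisely the upgrade obtained, as you do, from the Schur-complement block formulas together with the compact embedding $\dom(A)\cap(\ker A)^{\perp}\cpt\mathcal{H}$, a subsequence argument, and weak closedness of the graph of $A_{11}$. The key steps — coercivity of $S(T_n)+A_{11}$ from $\Re S(T_n)\geq\gamma_{00}$ and skew-selfadjointness, uniform $\gamma$-bounds on all four Schur data, the ``WOT operator against a strongly convergent vector'' mechanism, and the identification of the sub-subsequential limit — are the intended ones.
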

Apparently, by (canonically) extending the orthogonal projections from $\mathcal{H}$ to $\Lp[\nu]{2}(\R;\mathcal{H})$, the decomposition of $\mathcal{H}$ can be lifted to
\begin{equation*}
  \Lp[\nu]{2}(\R;\mathcal{H})=\Lp[\nu]{2}(\R;\ker(A))\oplus\Lp[\nu]{2}(\R;\ran(A))\text{,}
\end{equation*}
where $\nu\in\R$ is arbitrary.

Employing this and \Cref{lemma-WOTSOT-Theorie-Paper}, we immediately obtain the following refinement of the homogenisation theorem~\cite[Theorem~6.5]{BuSkWa24}:
\begin{theorem}\label{thm:main-nonlocal-homogenisation-result}
  Consider $\nu_{0}>0$ and material laws $M_{n}$ with $\C_{\Re>\nu_{0}}\subseteq\dom(M_{n})$ for $n\in\N$. Let there be some $c,d>0$ such that for all $z\in\C_{\Re>\nu_{0}}$ we have $\Re zM_{n}(z)\geq c$ and $\norm{M_{n}(z)}\leq d$.
  Then, $M_{n}\in\mathcal{M}(\mathcal{H},\nu)$ for all $n\in\N$ and $\nu>\nu_0$. Assume there exists  $M\colon\C_{\Re>\nu_{0}}\to\mathcal{M}(\ker(A),\ran(A))$
  with $\norm{M(z)}\leq d$ on $\C_{\Re>\nu_{0}}$. If $(M_{n})_{n\in\N}$ pointwise $\tau(\ker(A),\ran(A))$-converges to $M$ on $\C_{\Re>\nu_{0}}$,
  then  $M\in \mathcal{M}(\mathcal{H},\nu)$ for all $\nu>\nu_{0}$ as well as $\Re zM(z)\geq c$ on $\C_{\Re>\nu_{0}}$. Most importantly, for
  $f\in \Lp[\nu]{2}(\R,\mathcal{H})$, $\nu>\nu_{0}$, $n\in\N$, $\mathcal{S}_{n,\nu}\coloneqq\overline{[\partial_{t,\nu}M_{n}(\partial_{t,\nu})+A]}^{-1}$
  and $\mathcal{S}_{\nu}\coloneqq\overline{[\partial_{t,\nu}M(\partial_{t,\nu})+A]}^{-1}$,
    \begin{itemize}
          \item the $\Lp[\nu]{2}(\R;\ker(A))$-component of $\mathcal{S}_{n,\nu}f$ weakly converges to $\mathcal{S}_{\nu}f$,
          \item the $\Lp[\nu]{2}(\R;\ran(A))$-component of $\mathcal{S}_{n,\nu}f$ converges to $\mathcal{S}_{\nu}f$.
  \end{itemize}
\end{theorem}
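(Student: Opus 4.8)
The plan is to reduce the assertion to the stationary (frequency-domain) level and then feed it into \Cref{lemma-WOTSOT-Theorie-Paper} fibrewise. To start, the claims that $M\in\mathcal{M}(\mathcal{H},\nu)$ for $\nu>\nu_0$, that $\Re zM(z)\geq c$ on $\C_{\Re>\nu_0}$, and that $\mathcal{S}_{n,\nu}f\weakto\mathcal{S}_\nu f$ weakly in $\Lp[\nu]{2}(\R;\mathcal{H})$, are already the content of \cite[Theorem~6.5]{BuSkWa24}; if one insists on being self-contained, holomorphy of $M$ can be recovered by reconstructing $M(z)$ from its four Schur data $M(z)_{00}^{-1}$, $M(z)_{10}M(z)_{00}^{-1}$, $M(z)_{00}^{-1}M(z)_{01}$ and $M(z)_{11}-M(z)_{10}M(z)_{00}^{-1}M(z)_{01}$, each of which is a locally bounded, pointwise weak-operator-topology limit of the corresponding holomorphic Schur datum of $M_n$ and therefore itself holomorphic. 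Hence the only new point is to upgrade the $\Lp[\nu]{2}(\R;\ran(A))$-component of $\mathcal{S}_{n,\nu}f$ from weak to strong convergence; the weak convergence of the $\Lp[\nu]{2}(\R;\ker(A))$-component is then inherited from $\mathcal{S}_{n,\nu}f\weakto\mathcal{S}_\nu f$.

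For the stationary reduction I would pass to the material laws $N_n(z)\coloneqq(zM_n(z)+A)^{-1}$ and $N(z)\coloneqq(zM(z)+A)^{-1}$; by \cref{item:picard-cont-inv-mat-law} and the remarks following \Cref{thm:wpee} these have abscissa of boundedness at most $\nu_0$ and norms at most $1/c$, and $\mathcal{S}_{n,\nu}=N_n(\partial_{t,\nu})$, $\mathcal{S}_\nu=N(\partial_{t,\nu})$. Fix $z\in\C_{\Re>\nu_0}$ and set $T_n\coloneqq zM_n(z)$, $T\coloneqq zM(z)$. The crucial bookkeeping is to check that \Cref{lemma-WOTSOT-Theorie-Paper} applies to $(T_n)_n$: from $\Re T_n\geq c$ and $\norm{T_n}\leq\abs{z}d$ the standard block-operator estimates (compression of quadratic forms; $\Re B\geq c$ with $\norm{B}\leq K$ forcing $\Re B^{-1}\geq c/K^2$; the analogous bounds for Schur complements) yield $T_n\in\mathcal{M}(\gamma(z))$ for a $\gamma(z)\in(0,\infty)^{2\times2}$ depending only on $c,d,\abs{z}$; multiplication by the nonzero scalar $z$ is $\tau(\ker(A),\ran(A))$-continuous (it merely rescales the four Schur data by $z,1,1,z$), so $T_n\to T$ in $\tau(\ker(A),\ran(A))$; and as $T=zM(z)\in\mathcal{M}(\ker(A),\ran(A))$ and $\mathcal{M}(\gamma(z))$ is compact, hence closed, in the Hausdorff space $(\mathcal{M}(\ker(A),\ran(A)),\tau(\ker(A),\ran(A)))$, also $T\in\mathcal{M}(\gamma(z))$. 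Then \Cref{lemma-WOTSOT-Theorie-Paper} gives, for every $\varphi\in\mathcal{H}$, norm convergence of the $\ran(A)$-component of $N_n(z)\varphi$ to that of $N(z)\varphi$, and weak convergence of the $\ker(A)$-components (so, by weak lower semicontinuity of the norm, $\norm{N(z)}\leq1/c$ is recovered once more).

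The last step is to lift this fibrewise information through the Fourier--Laplace transform. Let $P\in\Lb(\mathcal{H})$ be the orthogonal projection onto $\ran(A)$; its fibrewise extensions to $\Lp[\nu]{2}(\R;\mathcal{H})$ and $\Lp{2}(\R;\mathcal{H})$ commute with $\mathcal{L}_\nu$, and since $\mathcal{S}_{n,\nu}=N_n(\partial_{t,\nu})=\mathcal{L}_\nu^{\ast}N_n(\iu\m+\nu)\mathcal{L}_\nu$ acts, after the transform, by the fibrewise multiplication $(N_n(\iu\m+\nu)\phi)(\xi)=N_n(\iu\xi+\nu)\phi(\xi)$, one gets $P\mathcal{S}_{n,\nu}f=\mathcal{L}_\nu^{\ast}\bigl(\xi\mapsto P\,N_n(\iu\xi+\nu)(\mathcal{L}_\nu f)(\xi)\bigr)$ for $f\in\Lp[\nu]{2}(\R;\mathcal{H})$. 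For almost every $\xi\in\R$ the integrand converges in $\mathcal{H}$ by the previous paragraph (applied with $z=\iu\xi+\nu\in\C_{\Re>\nu_0}$ and $\varphi=(\mathcal{L}_\nu f)(\xi)$) and is bounded there by $\tfrac1c\norm{(\mathcal{L}_\nu f)(\xi)}_{\mathcal{H}}$, which is square-integrable in $\xi$; so dominated convergence in the Bochner space $\Lp{2}(\R;\mathcal{H})$ and unitarity of $\mathcal{L}_\nu^{\ast}$ give $P\mathcal{S}_{n,\nu}f\to P\mathcal{S}_\nu f$ in $\Lp[\nu]{2}(\R;\mathcal{H})$, the asserted strong convergence. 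Subtracting this from $\mathcal{S}_{n,\nu}f\weakto\mathcal{S}_\nu f$ yields the weak convergence of the $\Lp[\nu]{2}(\R;\ker(A))$-component, finishing the argument.

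I expect the main obstacle to be the uniform-$\gamma$ bookkeeping in the second step: showing that, for each fixed $z$, both $zM_n(z)$ and its Schur-topology limit $zM(z)$ lie in one common $\mathcal{M}(\gamma(z))$. This is exactly the mechanism by which the compactness assumption underlying \Cref{lemma-WOTSOT-Theorie-Paper} (ultimately the compact embedding of $\dom(A)\cap(\ker A)^{\perp}$ into $\mathcal{H}$) gets transported from the coefficients to the solution operators and hence produces the strong-convergence half of the statement; with that in hand, the measurability-in-$\xi$ and dominated-convergence steps for the Fourier--Laplace lift are routine.
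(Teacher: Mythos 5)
Your proof is correct and follows essentially the same route as the paper: lift the orthogonal decomposition $\mathcal{H}=\ker A\oplus\ran A$ fibrewise to $\Lp[\nu]{2}(\R;\mathcal{H})$ via the Fourier--Laplace transform, apply \Cref{lemma-WOTSOT-Theorie-Paper} at each frequency $z=\iu\xi+\nu$, and collect the strong-convergence part by dominated convergence (the paper calls this step ``immediate'' and does not write out the $\gamma(z)$-bookkeeping or the Bochner-integral argument that you supply). One microscopic slip that does not affect anything: scalar multiplication by $z$ rescales the four Schur data by $z^{-1},1,1,z$ rather than $z,1,1,z$, since $(zM)_{00}^{-1}=z^{-1}M_{00}^{-1}$ while the off-diagonal ratios are scale-invariant and the Schur complement picks up a factor $z$; either way, multiplication by any fixed nonzero constant is a $\tau$-homeomorphism, so the conclusion stands.
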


The previous theorem will be a crucial tool to justify the convergence of the examples in \Cref{sec:examples}. In fact, by means of one analytic and several numerical examples we shall see that the convergence statements seem to be optimal.
\begin{remark}
Consider the setting of \Cref{thm:main-nonlocal-homogenisation-result}.
Whether $\Lp[\nu]{2}(\R;\ran(A))$ is a maximal (disregarding the trivial finite-dimensional extensions) subspace, on which $(\mathcal{S}_{n,\nu}f)_{n\in\N}$ converges to $\mathcal{S}_{\nu}f$ in norm for each
$f\in \Lp[\nu]{2}(\R,\mathcal{H})$, remains an open question.
\end{remark}

\section{Additional Properties of (Non-)Local $\Htopo$-Convergence}\label{sec:APHC}

In this section, we will prove a few, mostly straightforward, properties of local and nonlocal  $\Htopo$-convergence that we will need in \Cref{sec:examples}, and that only implicitly appear in the literature.

\subsection{Nonlocal $\Htopo$-Convergence of the Inverse Sequence}
A frequent scenario one is confronted with (and that also appears in \Cref{sec:examples}) is knowing how to compute
 the $\tau(\mathcal{H}_1,\mathcal{H}_0)$-limit but rather needing the $\tau(\mathcal{H}_0,\mathcal{H}_1)$-limit. The following theorem offers a solution: invert the sequence, compute the limit, and then invert back. This observation originated in \cite{Wa22}.
\begin{theorem}\label{thm:nonlocal-H-convergence-symm-wrt-inv}
For a sequence $(a_n)_{n\in\N}$ in $\mathcal{M}(\mathcal{H}_0,\mathcal{H}_1)$, the sequence of inverses $((a_n)^{-1})_{n\in\N}$ lies in $\mathcal{M}(\mathcal{H}_1,\mathcal{H}_0)$.
Furthermore, $\tau(\mathcal{H}_0,\mathcal{H}_1)$-convergence of $(a_n)_{n\in\N}$ to an $a\in \mathcal{M}(\mathcal{H}_0,\mathcal{H}_1)$ is equivalent to $\tau(\mathcal{H}_1,\mathcal{H}_0)$-convergence
of $((a_n)^{-1})_{n\in\N}$ to $a^{-1}$.
\end{theorem}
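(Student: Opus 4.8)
The plan is to reduce everything to the classical Schur-complement formula for the inverse of a $2\times 2$ operator matrix, combined with careful bookkeeping of which block plays which role under the swap $\mathcal{H}_0\leftrightarrow\mathcal{H}_1$. Fix $a\in\mathcal{M}(\mathcal{H}_0,\mathcal{H}_1)$ and set $s\coloneqq a_{11}-a_{10}a_{00}^{-1}a_{01}\in\Lb(\mathcal{H}_1)$, the Schur complement of $a_{00}$. First I would record the factorisation
\begin{equation*}
  a=\begin{pmatrix} I & 0 \\ a_{10}a_{00}^{-1} & I \end{pmatrix}\begin{pmatrix} a_{00} & 0 \\ 0 & s \end{pmatrix}\begin{pmatrix} I & a_{00}^{-1}a_{01} \\ 0 & I \end{pmatrix},
\end{equation*}
valid for bounded operators once $a_{00}^{-1}\in\Lb(\mathcal{H}_0)$. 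Since the two triangular factors are invertible, invertibility of $a$ forces invertibility of the middle block-diagonal operator, hence $s^{-1}\in\Lb(\mathcal{H}_1)$. Inverting the product yields the familiar block form of $b\coloneqq a^{-1}$: in particular $b_{11}=s^{-1}$, $b_{01}=-a_{00}^{-1}a_{01}s^{-1}$, $b_{10}=-s^{-1}a_{10}a_{00}^{-1}$, and $b_{00}=a_{00}^{-1}+a_{00}^{-1}a_{01}s^{-1}a_{10}a_{00}^{-1}$. As $b_{11}=s^{-1}$ is boundedly invertible and $b^{-1}=a\in\Lb(\mathcal{H})$, the operator $a^{-1}=b$ lies in $\mathcal{M}(\mathcal{H}_1,\mathcal{H}_0)$; this is the first assertion.

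The crux is the second step: computing the four ``Schur data'' of $b$ with respect to the decomposition $\mathcal{H}=\mathcal{H}_1\oplus\mathcal{H}_0$ (so that now $\mathcal{H}_1$ is the leading, ``$00$''-summand) and comparing them with the Schur data of $a$ with respect to $\mathcal{H}_0\oplus\mathcal{H}_1$. Writing out \Cref{def:nonlocal-H-convergence} in the swapped order and substituting the formulas for the $b_{ij}$, a short computation gives
\begin{align*}
  b_{11}^{-1} &= a_{11}-a_{10}a_{00}^{-1}a_{01}, & b_{01}\,b_{11}^{-1} &= -a_{00}^{-1}a_{01}, \\
  b_{11}^{-1}\,b_{10} &= -a_{10}a_{00}^{-1}, & b_{00}-b_{01}\,b_{11}^{-1}\,b_{10} &= a_{00}^{-1}.
\end{align*}
In other words, the tuple of Schur data of $a^{-1}$ (taken in the order $\mathcal{H}_1,\mathcal{H}_0$) is obtained from the tuple of Schur data of $a$ (in the order $\mathcal{H}_0,\mathcal{H}_1$) by a fixed permutation of the four entries together with a sign change on two of them — a bijection $\Phi$ of tuples of bounded operators that is its own structural inverse (applying the analogous construction to $a^{-1}$ returns the data of $a$).

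Finally I would invoke \Cref{thm:nonlocal-H-convergence-comes-from-a-topology}: by it, $\tau(\mathcal{H}_0,\mathcal{H}_1)$-convergence of a sequence is precisely weak-operator convergence of the four Schur data, and likewise $\tau(\mathcal{H}_1,\mathcal{H}_0)$-convergence is weak-operator convergence of the swapped Schur data. Since permuting entries and multiplying by $-1$ are continuous (indeed homeomorphic) for the weak operator topology, $\Phi$ transports weakly convergent tuples to weakly convergent tuples and conversely. Applied to the sequences, this shows that $(a_n)_{n\in\N}$ has Schur data converging weakly to those of $a$ if and only if $(a_n^{-1})_{n\in\N}$ has (swapped) Schur data converging weakly to those of $a^{-1}$; that is exactly the asserted equivalence between $\tau(\mathcal{H}_0,\mathcal{H}_1)$-convergence of $(a_n)_{n\in\N}$ to $a$ and $\tau(\mathcal{H}_1,\mathcal{H}_0)$-convergence of $(a_n^{-1})_{n\in\N}$ to $a^{-1}$.

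The only place demanding real care is the second step: getting the index bookkeeping right under the swap — i.e.\ tracking which $b_{ij}$ fills which slot once $\mathcal{H}_1$ becomes the leading summand — and making sure the Schur complement $s$ is boundedly invertible so that all four expressions are legitimate. The rest is routine manipulation of bounded operators, and no compactness or positivity hypotheses (no $\gamma$-bounds) are needed.
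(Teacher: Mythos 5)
Your proposal is correct and takes essentially the same route as the paper: both hinge on the explicit block (Schur-complement) formula for $a^{-1}$, after which the statement follows directly from \Cref{def:nonlocal-H-convergence}. The only difference is cosmetic — you rederive the block-inverse formula via the triangular factorisation, whereas the paper simply cites \cite[Lemma~4.8]{Wa18}; your explicit verification of the four swapped Schur data and the observation that the resulting map is a WOT-homeomorphism is the bookkeeping that the paper compresses into ``the definition of the respective Schur topologies now directly implies the statement.''
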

\begin{proof}
By~\cite[Lemma~4.8]{Wa18}, we obtain $(a_n)^{-1}\in\mathcal{M}(\mathcal{H}_1,\mathcal{H}_0)$ with the decomposition
\begin{equation*}
(a_n)^{-1}=\begin{pmatrix}a_{n,00}^{-1}+a_{n,00}^{-1}a_{n,01}\tilde{a}_na_{n,10}a_{n,00}^{-1}&-a_{n,00}^{-1}a_{n,01}\tilde{a}_n\\-\tilde{a}_n a_{n,10}a_{n,00}^{-1}&\tilde{a}_n\end{pmatrix}
\end{equation*}
for $n\in\N$, where $\tilde{a}_{n}\coloneqq (a_{n,11}-a_{n,10}a_{n,00}^{-1}a_{n,01})^{-1}$. The definition of the respective Schur topologies now directly implies the statement.
\end{proof}

\subsection{A Pasting Theorem}

In \Cref{sec:examples}, we will also need a kind of a pasting property of local $\Htopo$-convergence. That means we want to conclude local $\Htopo$-convergence
on a domain from local $\Htopo$-convergence on each of two disjoint subdomains that together span the domain in a certain topological and measure theoretical sense.
This is basically an immediate corollary of~\cite[Lemma~10.5]{Ta09}.

\begin{theorem}\label{thm:localsums}
Consider $d\in\N$, $\Omega\subseteq\R^d$ bounded and open, $0<\alpha<\beta$, a sequence $(a_n)_{n\in\N}$ in $M(\alpha,\beta,\Omega)$, and some $a\in M(\alpha,\beta,\Omega)$.
If for almost every $x\in \Omega$ there exists an open neighborhood $\omega_x\subseteq\Omega$ of $x$ such that $(a_n\restrict_{\omega_x})_{n\in\N}$ (locally) $\Htopo$-converges to $a\restrict_{\omega_x}$ in $M(\alpha,\beta,\omega_x)$, then
$(a_n)_{n\in\N}$ (locally) $\Htopo$-converges to $a$ in $M(\alpha,\beta,\Omega)$.
\end{theorem}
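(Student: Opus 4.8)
The plan is to reduce \Cref{thm:localsums} to the cited \cite[Lemma~10.5]{Ta09}, which is the \emph{localisation principle} for $\Htopo$-convergence: it states that $\Htopo$-convergence is a local property, meaning a sequence $(a_n)_n$ in $M(\alpha,\beta,\Omega)$ $\Htopo$-converges to $a$ on $\Omega$ if and only if every point of $\Omega$ has an open neighbourhood on which the restricted sequence $\Htopo$-converges to the restriction of $a$. First I would recall the precise statement of that lemma and check that its hypotheses are met here: we are given, for almost every $x\in\Omega$, an open neighbourhood $\omega_x\subseteq\Omega$ with $(a_n\restrict_{\omega_x})_n$ locally $\Htopo$-converging to $a\restrict_{\omega_x}$ in $M(\alpha,\beta,\omega_x)$. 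Since all coefficients already lie in $M(\alpha,\beta,\cdot)$ with the \emph{same} constants $\alpha,\beta$ (ellipticity and boundedness being pointwise-a.e.\ conditions that restrict trivially), the only gap between our hypothesis and the one in \cite[Lemma~10.5]{Ta09} is the word ``almost'': we have a covering of $\Omega$ up to a null set, whereas the localisation principle wants a genuine open cover.

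The key step is therefore to argue that throwing away a null set is harmless. Let $N\subseteq\Omega$ be the null set of ``bad'' points and set $\Omega' \coloneqq \bigcup_{x\in\Omega\setminus N}\omega_x$. This is open, and $\Omega\setminus\Omega'\subseteq N$ has measure zero; in particular $\Omega'$ has full measure in $\Omega$. Applying \cite[Lemma~10.5]{Ta09} on the open set $\Omega'$ (with its own open cover $\{\omega_x\}_{x\in\Omega\setminus N}$, each $\omega_x\subseteq\Omega'$) yields that $(a_n\restrict_{\Omega'})_n$ $\Htopo$-converges to $a\restrict_{\Omega'}$ in $M(\alpha,\beta,\Omega')$. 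It then remains to pass from $\Omega'$ back to $\Omega$: since $\Omega\setminus\Omega'$ is a Lebesgue null set, the $\Lp{2}$-spaces, the $\sobH^{-1}$-spaces, and the operators $\divcon,\cgrad$ on $\Omega$ and on $\Omega'$ agree — more precisely, restriction $\Lp{2}(\Omega)\to\Lp{2}(\Omega')$ is a unitary isomorphism, $\cH^1(\Omega)=\cH^1(\Omega')$, and the multiplication operators $a_n,a$ are unchanged — so the defining solution sequences for \labelcref{PDElocalHconvergSequence}--\labelcref{PDElocalHconvergSolution} on $\Omega$ coincide with those on $\Omega'$ and the weak convergences $u_n\weakto u$ in $\cH^1$ and $a_n\cgrad u_n\weakto a\cgrad u$ in $\Lp{2}$ transfer verbatim. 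Hence $(a_n)_n$ $\Htopo$-converges to $a$ in $M(\alpha,\beta,\Omega)$.

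The main obstacle — really the only subtlety — is the passage between ``almost every $x$ has a good neighbourhood'' and the genuine open cover required by the localisation lemma; everything else is bookkeeping about null sets not affecting $\Lp{2}$-based function spaces. One should also double-check that \cite[Lemma~10.5]{Ta09} is stated (or trivially extends) for arbitrary bounded open sets rather than only for the ambient $\Omega$, and that no geometric regularity of the boundary is needed for the localisation direction we use; this is the case because the localisation principle for $\Htopo$-convergence is purely a statement about testing weak convergence against compactly supported test functions, which is insensitive to boundary behaviour. With these remarks in place the proof is the three-line argument above, and the reference to \cite[Lemma~10.5]{Ta09} does the real work. For the two-subdomain phrasing advertised in the section title one simply notes that this is the special case where $\Omega$ is covered (up to a null set) by two disjoint open subdomains $\Omega_1,\Omega_2$, on each of which one already knows $\Htopo$-convergence; then every point of $\Omega_1\cup\Omega_2$ lies in one of the two, providing the required almost-everywhere family of neighbourhoods.
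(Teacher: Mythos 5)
Your proposal rests on the claim that \cite[Lemma~10.5]{Ta09} is a full \emph{if-and-only-if} localisation principle, in particular that it provides the glueing direction (``local $\Htopo$-convergence on an open cover implies global $\Htopo$-convergence''). That is not what the lemma says: it is a locality/uniqueness statement, asserting that a limit that already exists on the large domain restricts correctly to subdomains (equivalently, two $\Htopo$-limits that come from sequences agreeing on an open subset must agree a.e.\ there). The glueing direction is genuinely stronger and does not follow from Lemma~10.5 alone — one must first \emph{produce} a candidate global limit. The paper does this with a compactness argument: by \Cref{thm:HConvergenceHausdorffCompactMetrisable}, every subsequence of $(a_n)_n$ has a further subsequence that locally $\Htopo$-converges to some $b \in M(\alpha,\beta,\Omega)$; then on each $\omega_x$, Lemma~10.5 together with the hypothesis and uniqueness of $\Htopo$-limits forces $b\restrict_{\omega_x} = a\restrict_{\omega_x}$ a.e., hence $b = a$ a.e.\ since the $\omega_x$ cover $\Omega$ up to a null set; finally, arbitrariness of the initial subsequence and Hausdorffness of the $\Htopo$-topology give convergence of the whole sequence. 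Without this sequential-compactness step your argument has no mechanism to conclude anything on $\Omega$.

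A secondary but real problem is your passage from $\Omega'$ back to $\Omega$: the assertion that $\cH^1(\Omega) = \cH^1(\Omega')$ whenever $\Omega\setminus\Omega'$ is a Lebesgue null set is false. Take $\Omega=(-1,1)^2$ and $\Omega' = \Omega \setminus ([0,1)\times\{0\})$: the slit is a null set, both sets are open, yet $\cH^1(\Omega')$ is strictly larger (it contains functions jumping across the slit), so the Dirichlet problems defining local $\Htopo$-convergence on the two domains are genuinely different. The paper's compactness route avoids this entirely, because after extracting the subsequential limit $b$ one only needs to compare $a$ and $b$ pointwise a.e.\ — no function-space identification between $\Omega$ and $\Omega'$ is ever required. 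If you replace your direct invocation of Lemma~10.5 by the compactness-plus-uniqueness scheme of the paper, the argument goes through and the $\cH^1(\Omega)$ vs.\ $\cH^1(\Omega')$ issue simply disappears.
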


\begin{proof}
  By \Cref{thm:HConvergenceHausdorffCompactMetrisable}, every subsequence of $(a_n)_{n\in\N}$ has another subsequence that (locally) $\Htopo$-converges to some $b\in M(\alpha,\beta,\Omega)$.
  From \cite[Lemma~10.5]{Ta09} and from the assumptions, we infer that $a$ and $b$ coincide. As the initial subsequence was chosen arbitrarily, this concludes the proof.
\end{proof}

\subsection{Independence of Boundary Conditions}
Another property that we will need in \Cref{sec:examples} is a variant of \Cref{thm:H-converges-equivalent-to-Schur-convergence}
with different boundary conditions that was already proposed in~\cite[Remark~5.12]{Wa18}, see also~\cite[Theorem~2.4]{BuSkWa24}. It hinges upon
the independence of the homogeneous Dirichlet boundary conditions that
were imposed in the definition of local $\Htopo$-convergence (cf.\ \Cref{subsec:localHtopo}). To be precise, the following concrete realisation of~\cite[Lemma~10.3]{Ta09} holds.
\begin{lemma}
Consider $d\in\N$, $\Omega\subseteq\R^d$ bounded, open and connected, and $0<\alpha<\beta$. Then, for a sequence $(a_n)_{n\in\N}$ in
 $M(\alpha,\beta,\Omega)$ and $a\in M(\alpha,\beta,\Omega)$, the following statements are equivalent:
\begin{enumerate}
 \item\label{it:localHconvDBC} $(a_n)_{n\in\N}$ locally $\Htopo$-converges to $a$.
 \item\label{it:localHconvNeuBC} For all $f \in \sobH^{1}_{\perp}(\Omega)'$, where  $\sobH^{1}_{\perp}(\Omega)\coloneqq\dset{v \in \sobH^{1}(\Omega)}{\int_{\Omega}v = 0}$ (endowed with the $\sobH^{1}(\Omega)$-scalar product),
 the unique solutions $u_{n}\in \sobH^{1}_{\perp}(\Omega),n\in\N$, of
\begin{equation}\label{eq:localHomogenisationNeumannBDSeq}
\forall v\in \sobH^{1}_{\perp}(\Omega) \colon\scprod*{a_{n}\grad u_{n}}{\grad v}_{\Lp{2}(\Omega)^d}=f(v)
\end{equation}
weakly $\sobH^{1}_{\perp}(\Omega)$-converge to the unique solution $u\in \sobH^{1}_{\perp}(\Omega)$ of
\begin{equation}\label{eq:localHomogenisationNeumannBDLimit}
\forall v\in \sobH^{1}_{\perp}(\Omega)\colon\scprod*{a\grad u}{\grad v}_{\Lp{2}(\Omega)^d}=f(v)\text{,}
\end{equation}
and $a_n\grad u_n$ weakly $\Lp{2}(\Omega)^d$-converges to $a\grad u$.
\end{enumerate}
\end{lemma}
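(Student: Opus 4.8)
The plan is to reduce the equivalence of the Dirichlet formulation (\labelcref{it:localHconvDBC}) and the Neumann formulation (\labelcref{it:localHconvNeuBC}) to the abstract localisation/change-of-boundary-condition result \cite[Lemma~10.3]{Ta09}, which already contains the analytic substance; what remains is to check that the hypotheses of that lemma are met in the present concrete setting and to translate its conclusion into the weak-convergence statements as phrased here. First I would recall that on a bounded, open, connected $\Omega$ both problems are well-posed: for \labelcref{eq:localHomogenisationNeumannBDSeq}--\labelcref{eq:localHomogenisationNeumannBDLimit} this is Lax--Milgram on $\sobH^1_\perp(\Omega)$, using $\Re a_n \ge \alpha$ together with a Poincaré--Wirtinger inequality to get coercivity of $v\mapsto \scprod{a_n\grad v}{\grad v}$ on $\sobH^1_\perp(\Omega)$ (here connectedness of $\Omega$ is exactly what makes $\norm{\grad v}_{\Lp{2}}$ an equivalent norm on the zero-mean subspace), and boundedness from $\norm{a_n}\le\beta$; the bounds are uniform in $n$, so $(u_n)_n$ is bounded in $\sobH^1_\perp(\Omega)$ and $(a_n\grad u_n)_n$ is bounded in $\Lp{2}(\Omega)^d$.

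Next I would run the standard compactness-plus-uniqueness argument, mirroring the proof of \Cref{thm:localsums}. Fix $f\in\sobH^1_\perp(\Omega)'$ and pass to an arbitrary subsequence; by the uniform bounds just obtained we may extract a further subsequence along which $u_n\weakto u^\star$ in $\sobH^1_\perp(\Omega)$ and $a_n\grad u_n\weakto \xi^\star$ in $\Lp{2}(\Omega)^d$. By \Cref{thm:HConvergenceHausdorffCompactMetrisable} we may simultaneously assume that $(a_n)_n$ locally $\Htopo$-converges to some $b\in M(\alpha,\beta,\Omega)$ along this subsequence. Now \cite[Lemma~10.3]{Ta09} is precisely the statement that local $\Htopo$-convergence (defined via homogeneous Dirichlet data) forces the Neumann-problem solutions to converge weakly to the solution of the limit Neumann problem with coefficient $b$, and the fluxes to converge weakly to $b\grad u^\star$; hence $u^\star$ solves \labelcref{eq:localHomogenisationNeumannBDLimit} with $a$ replaced by $b$ and $\xi^\star=b\grad u^\star$. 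The key observation is that under the hypothesis of \labelcref{it:localHconvDBC} we have $b=a$, while under the hypothesis of \labelcref{it:localHconvNeuBC} the limit $u^\star$ must equal the (unique) solution $u$ of \labelcref{eq:localHomogenisationNeumannBDLimit} with coefficient $a$, which by the injectivity built into the definition of local $\Htopo$-convergence pins down $b=a$ as well (this is the same rigidity used in \Cref{thm:localsums}: distinct $\Htopo$-limits are separated by some test datum). Either way the limit is independent of the chosen subsequence, so by the subsequence principle the full sequence converges, giving the desired implication in the direction being proved; the converse direction is obtained by the symmetric argument, now starting from the Dirichlet problems \labelcref{PDElocalHconvergSequence} and invoking the same lemma read in the opposite direction.

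For the direction \labelcref{it:localHconvNeuBC}$\Rightarrow$\labelcref{it:localHconvDBC} one small additional point deserves care: one must know that the Neumann-convergence hypothesis, holding for \emph{all} $f\in\sobH^1_\perp(\Omega)'$, is enough to determine the local $\Htopo$-limit uniquely. This follows because, by the compactness in \Cref{thm:HConvergenceHausdorffCompactMetrisable}, $(a_n)_n$ has at least one local $\Htopo$-limit point $b$, for that $b$ the Neumann solutions converge (by \cite[Lemma~10.3]{Ta09}) to the $b$-solution, and comparing with the assumed limit $u$ for every right-hand side forces the bilinear forms $\scprod{b\grad\argdot}{\grad\argdot}$ and $\scprod{a\grad\argdot}{\grad\argdot}$ to agree on $\sobH^1_\perp(\Omega)$; since the $\cH^1$-Dirichlet limit is likewise insensitive only to real skew-selfadjoint perturbations (cf.\ \Cref{le:dirich-grad-vanishes-for-skew-selfad}) and \cite[Lemma~10.3]{Ta09} already encodes the correct matching of symmetric parts, this identifies the Dirichlet $\Htopo$-limit as $a$.

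The main obstacle is not any single estimate but making the appeal to \cite[Lemma~10.3]{Ta09} airtight: that lemma is stated in Tartar's framework, and one has to verify that our $\sobH^1_\perp(\Omega)$-formulation with zero-mean constraint, our definition of local $\Htopo$-convergence via $\cH^1$ Dirichlet problems, and the flux convergence $a_n\cgrad u_n\weakto a\cgrad u$ are exactly the hypotheses and conclusions he uses (in particular that connectedness of $\Omega$ is the standing assumption that makes the Neumann problem well-posed and the two notions comparable). Once that dictionary is fixed, the proof is the routine compactness/uniqueness scheme above, and I would present it as such, referring to the proof of \Cref{thm:localsums} for the subsequence bookkeeping to avoid repetition.
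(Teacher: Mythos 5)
Your proposal follows essentially the same route as the paper: establish uniform a priori bounds for the Neumann problems via Lax--Milgram on $\sobH^1_\perp(\Omega)$, pass to weakly convergent subsequences, and invoke Tartar's \cite[Lemma~10.3]{Ta09} to identify the flux limit, finishing by a compactness/uniqueness argument based on \Cref{thm:HConvergenceHausdorffCompactMetrisable}. Two points of comparison are worth noting.

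First, the paper's proof gets the converse direction \labelcref{it:localHconvNeuBC}$\Rightarrow$\labelcref{it:localHconvDBC} for free by a topological observation: it first records that the convergence in~\labelcref{it:localHconvNeuBC} is induced by a Hausdorff topology on $M(\alpha,\beta,\Omega)$ (by the same construction that proves metrisability in \Cref{thm:HConvergenceHausdorffCompactMetrisable}), proves only \labelcref{it:localHconvDBC}$\Rightarrow$\labelcref{it:localHconvNeuBC} explicitly, and then uses that a continuous bijection from a compact space to a Hausdorff space is a homeomorphism. Your symmetric subsequence argument for the converse is a sequential reformulation of exactly this fact, so it is not a genuinely different route; the rigidity you invoke (that the Neumann limit data determine the coefficient) is precisely what makes the \labelcref{it:localHconvNeuBC}-topology Hausdorff. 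However, your phrasing of that rigidity step is imprecise: agreement of the bilinear forms $\scprod{b\grad\argdot}{\grad\argdot}$ and $\scprod{a\grad\argdot}{\grad\argdot}$ on $\sobH^1_\perp(\Omega)$ does not pin down $b=a$ (a real skew-selfadjoint perturbation is invisible there, cf.\ \Cref{le:dirich-grad-vanishes-for-skew-selfad}), and your remark that ``Lemma~10.3 already encodes the correct matching of symmetric parts'' does not close that gap. The clean version uses the flux convergence that is part of statement~\labelcref{it:localHconvNeuBC}: from $b\grad u = a\grad u$ for all $u$ in $\sobH^1_\perp(\Omega)$ (hence for all $u$ in $\sobH^1(\Omega)$), test with the coordinate functions to get $b=a$ a.e.

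Second, and more substantively, your sketch omits the step that actually makes \cite[Lemma~10.3]{Ta09} applicable. That lemma is formulated for sources in $\sobH^{-1}(\Omega)$, whereas in~\labelcref{it:localHconvNeuBC} the data $f$ lies in $\sobH^1_\perp(\Omega)'$ and the test space is the constrained space $\sobH^1_\perp(\Omega)$. The paper bridges this by computing, for $w\in\cH^1(\Omega)$,
\begin{equation*}
\scprod{a_n\grad u_n}{\grad w}_{\Lp{2}(\Omega)^d} = f(w-w_\Omega), \qquad w_\Omega \coloneqq \abs{\Omega}^{-1}\int_\Omega w \dx,
\end{equation*}
and verifying $(w\mapsto f(w-w_\Omega))\in\sobH^{-1}(\Omega)$. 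This is the piece of content that turns the Neumann problem into a Dirichlet-type problem with $\sobH^{-1}$ right-hand side, and your proposal leaves it as ``dictionary checking'' rather than recognising it as the crucial translation. Also, \cite[Lemma~10.3]{Ta09} does not by itself give weak convergence of $u_n$; you must first extract a weakly convergent subsequence from the uniform bounds and then use the lemma to identify the flux limit $q = a\grad u$, which is what the paper does.
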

\begin{proof}
  First, the same construction that eventually proves metrisability in \Cref{thm:HConvergenceHausdorffCompactMetrisable} also shows that the convergence in~\labelcref{it:localHconvNeuBC} is induced by a Hausdorff topology
on $M(\alpha,\beta,\Omega)$.

  Next, assume~\labelcref{it:localHconvDBC}, and consider the solution sequence $(u_{n})_{n\in\N}$
  from~\labelcref{eq:localHomogenisationNeumannBDSeq} for some fixed $f$. Extract any subsequence, and use the same notation.
  By classical elliptic theory, this sequence is bounded in $\sobH^{1}(\Omega)$
  by the norm of $f$ times a constant that only depends on $\alpha$ and the shape of $\Omega$. Thus, the sequence $(a_{n}\grad u_{n})_{n\in\N}$ is bounded
  in $\Lp{2}(\Omega)^d$ by the same bound times $1/\beta$, and we obtain yet another subsequence, $u\in\sobH^{1}(\Omega)$, as well as $q\in \Lp{2}(\Omega)^d$
  such that $u_n\weakto u$ in $\sobH^{1}(\Omega)$, which also implies $u\in\sobH^{1}_{\perp}(\Omega)$, as well as $a_{n}\grad u_{n}\weakto q$ in $\Lp{2}(\Omega)^d$. For $w\in \cH^{1}(\Omega)$, we can calculate
\begin{equation*}
\scprod*{a_{n}\grad u_{n}}{\grad w}_{\Lp{2}(\Omega)^d}=f(w-w_{\Omega})\text{,}
\end{equation*}
where $w_{\Omega}\coloneqq \abs{\Omega}^{-1}\int_{\Omega} w \dx$ is the integral mean. Straightforward computations show $(w\mapsto f(w-w_{\Omega}))\in \sobH^{-1}(\Omega)$.
Therefore, we can apply \cite[Lemma~10.3]{Ta09} which yields $q=a\grad u$. It remains to verify~\labelcref{eq:localHomogenisationNeumannBDLimit}, but this is an immediate consequence of
$a_{n}\grad u_{n}\weakto a\grad u$ in $\Lp{2}(\Omega)^d$.

So, the identity mapping is continuous from the compact local $\Htopo$-topology to the Hausdorff topology that induces the convergence in~\labelcref{it:localHconvNeuBC}. Hence, these two topologies on
$M(\alpha,\beta,\Omega)$ coincide.
\end{proof}

With this,~\cite[Remark~5.12]{Wa18} yields:

\begin{theorem}\label{thm:H-converges-equivalent-to-Schur-convergence-diff-BD}
Let $\Omega\subseteq\R^3$ be a simply connected bounded weak Lipschitz domain. For $0<\alpha<\beta$, a  sequence $(a_n)_{n\in\N}$ from
 $M(\alpha,\beta,\Omega)$ locally $\Htopo$-converges to an $a\in M(\alpha,\beta,\Omega)$ if and only if it
 $\tau(\ran(\grad),\ran(\ccurl))$-converges to $a$.
\end{theorem}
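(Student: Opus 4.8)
The proof will follow the pattern of that of \Cref{thm:H-converges-equivalent-to-Schur-convergence}, as announced in \cite[Remark~5.12]{Wa18}: the sole modification is that the Dirichlet problem~\labelcref{PDElocalHconvergSequence} is replaced throughout by the Neumann problem~\labelcref{eq:localHomogenisationNeumannBDSeq}, which is permitted because the preceding lemma shows that the two problems encode the same convergence on $M(\alpha,\beta,\Omega)$. Set $\mathcal{H}_{0}\coloneqq\ran(\grad)$ and $\mathcal{H}_{1}\coloneqq\ran(\ccurl)$. Since $\Omega$ is simply connected, \Cref{re:HelmhDecompTopoTrivial} (see also \cite{Wa18b}) supplies the orthogonal Helmholtz decomposition $\Lp{2}(\Omega)^{3}=\mathcal{H}_{0}\oplus\mathcal{H}_{1}$ and the compact embeddings used below, and one verifies, exactly as in \cite[Section~5]{Wa18}, that $M(\alpha,\beta,\Omega)\subseteq\mathcal{M}(\mathcal{H}_{0},\mathcal{H}_{1})$, indeed $\subseteq\mathcal{M}(\gamma,\mathcal{H}_{0},\mathcal{H}_{1})$ for some $\gamma\in(0,\infty)^{2\times 2}$ depending only on $\alpha$ and $\beta$; so the Schur topology in play is $\tau(\mathcal{H}_{0},\mathcal{H}_{1})=\tau(\ran(\grad),\ran(\ccurl))$.

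The core of the argument is the dictionary between the Neumann problem and part of the Schur data. Fix $a\in M(\alpha,\beta,\Omega)$. For $f\in\sobH^{1}_{\perp}(\Omega)'$, let $g\in\mathcal{H}_{0}$ be the Riesz representative on $\mathcal{H}_{0}=\ran(\grad)=\{\grad v\mid v\in\sobH^{1}_{\perp}(\Omega)\}$ of the (by Poincaré--Wirtinger) bounded functional $\grad v\mapsto f(v)$; as $f$ varies, $g$ exhausts $\mathcal{H}_{0}$. With $E_{n}\coloneqq\grad u_{n}\in\mathcal{H}_{0}$, the unique solution $u_{n}$ of~\labelcref{eq:localHomogenisationNeumannBDSeq} satisfies $a_{n,00}E_{n}=g$, so $\grad u_{n}=a_{n,00}^{-1}g$ and, in the splitting $\mathcal{H}_{0}\oplus\mathcal{H}_{1}$, $a_{n}\grad u_{n}=g\oplus a_{n,10}a_{n,00}^{-1}g$; the same identities hold for the limit problem. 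Since on $\sobH^{1}_{\perp}(\Omega)$ weak convergence is equivalent to weak convergence of the gradients, item~\labelcref{it:localHconvNeuBC} amounts to $\grad u_{n}\weakto\grad u$ and $a_{n}\grad u_{n}\weakto a\grad u$ in $\Lp{2}(\Omega)^{3}$, hence --- reading off $\mathcal{H}_{0}$- and $\mathcal{H}_{1}$-components --- exactly to $a_{n,00}^{-1}\to a_{00}^{-1}$ and $a_{n,10}a_{n,00}^{-1}\to a_{10}a_{00}^{-1}$ in the weak operator topology. This already proves the implication ``$\tau(\mathcal{H}_{0},\mathcal{H}_{1})$-convergence $\Rightarrow$ local $\Htopo$-convergence'', for these two weak operator convergences are among the four defining ones in \Cref{def:nonlocal-H-convergence}, and conversely they give back $\grad u_{n}\weakto\grad u$ and $a_{n}\grad u_{n}\weakto a\grad u$, whence item~\labelcref{it:localHconvNeuBC} and, by the preceding lemma, local $\Htopo$-convergence.

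For the reverse implication one still has to produce the other two Schur quantities, $a_{n,00}^{-1}a_{n,01}\to a_{00}^{-1}a_{01}$ and the Schur complements $a_{n,11}-a_{n,10}a_{n,00}^{-1}a_{n,01}\to a_{11}-a_{10}a_{00}^{-1}a_{01}$; these cannot be seen from the Neumann problem (whose data lives in $\mathcal{H}_{0}$), only from the \emph{dual}, curl-type problem with data in $\mathcal{H}_{1}=\ran(\ccurl)$, solved by $F=\tilde a_{n}g$ and $E=-a_{n,00}^{-1}a_{n,01}\tilde a_{n}g$ with $\tilde a_{n}\coloneqq(a_{n,11}-a_{n,10}a_{n,00}^{-1}a_{n,01})^{-1}$. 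That local $\Htopo$-convergence of $(a_{n})_{n\in\N}$ also forces convergence of the solution operators of this dual problem is the div--curl self-duality of $\Htopo$-convergence; I would obtain it by applying the dictionary of the second paragraph to the transposed sequence $(a_{n}^{\ast})_{n\in\N}$ (which locally $\Htopo$-converges to $a^{\ast}$), giving $a_{n,00}^{-1}a_{n,01}\to a_{00}^{-1}a_{01}$, and by combining \Cref{thm:nonlocal-H-convergence-symm-wrt-inv} with the dictionary applied to the inverses $(a_{n}^{-1})_{n\in\N}$ (a sequence in $M(1/\beta,1/\alpha,\Omega)$ with local $\Htopo$-limit $a^{-1}$), which identifies the Schur complement of $a_{n}$ with a block-inverse of $a_{n}^{-1}$ and so yields its weak operator convergence. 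This passage to the dual problem is precisely where the substantive work of \cite[Theorem~5.11 and Remark~5.12]{Wa18} sits, and I expect it to be the only real obstacle; the remainder is a transcription of the proof of \Cref{thm:H-converges-equivalent-to-Schur-convergence} with ``Dirichlet'' replaced by ``Neumann'' and $(\cgrad,\curl,\div)$ by $(\grad,\ccurl,\cdiv)$.
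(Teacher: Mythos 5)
The paper itself offers no proof beyond the Dirichlet--Neumann lemma followed by a citation of \cite[Remark~5.12]{Wa18}, so the substance you are reconstructing lies entirely in that reference. Your overall architecture is right: the Helmholtz decomposition $\mathcal{H}_{0}=\ran(\grad)$, $\mathcal{H}_{1}=\ran(\ccurl)$, the "dictionary" $a_{n,00}\grad u_{n}=g$ identifying $\grad u_{n}=a_{n,00}^{-1}g$ and $a_{n}\grad u_{n}=g\oplus a_{n,10}a_{n,00}^{-1}g$, the observation that these give exactly two of the four Schur quantities and hence the implication $\tau$-convergence $\Rightarrow$ local $\Htopo$-convergence at once, and the passage to $(a_{n}^{\ast})_{n}$ (which does locally $\Htopo$-converge to $a^{\ast}$) to recover $a_{n,00}^{-1}a_{n,01}\to a_{00}^{-1}a_{01}$ --- all of this is sound.

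The gap is in your treatment of the fourth Schur quantity. You assert, as a fact, that $(a_{n}^{-1})_{n}$ "is a sequence in $M(1/\beta,1/\alpha,\Omega)$ with local $\Htopo$-limit $a^{-1}$". That is false: local $\Htopo$-convergence is not stable under pointwise inversion of the coefficient. Already for stratified media (\Cref{thm:ExplicitLocalHLimitForStratMedia}) the $\Htopo$-limit of $a_{n}(x)=a(nx)$ has $(a_{\hom})_{11}=\mathrm{m}(1/\hat{a}_{11})^{-1}$, whereas the $\Htopo$-limit of $a_{n}^{-1}$ has, in the same corner, $\mathrm{m}(\hat{a}_{11})^{-1}$, and these differ as soon as $\hat a_{11}$ is nonconstant; so the local $\Htopo$-limit of $(a_{n}^{-1})_{n}$ is generally \emph{not} $a_{\hom}^{-1}$. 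Consequently, you cannot feed $(a_{n}^{-1})_{n}$ into the Neumann dictionary, and the appeal to \Cref{thm:nonlocal-H-convergence-symm-wrt-inv} cannot close the loop here either: that theorem converts $\tau(\mathcal{H}_{0},\mathcal{H}_{1})$-convergence of $a_{n}$ into $\tau(\mathcal{H}_{1},\mathcal{H}_{0})$-convergence of $a_{n}^{-1}$, but $\tau(\mathcal{H}_{0},\mathcal{H}_{1})$-convergence of $a_{n}$ is precisely what you are trying to establish, so using it is circular. What the div--curl duality of $\Htopo$-convergence actually asserts is convergence of $a_{n}^{-1}$ with respect to the \emph{curl-type} problem (equivalently, the Schur data for the $\mathcal{H}_{1}\oplus\mathcal{H}_{0}$ ordering), not local $\Htopo$-convergence of $a_{n}^{-1}$ in the div--grad sense you invoke; this is precisely the nontrivial content of \cite[Theorem~5.11, Remark~5.12]{Wa18}, and it cannot be deduced the way you propose. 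A workable alternative for the reverse implication, bypassing the duality entirely, is to use the forward direction you already have together with compactness: the identity map is continuous from $(\mathcal{M}(\gamma),\tau(\mathcal{H}_{0},\mathcal{H}_{1}))$ into the local $\Htopo$-topology on $M(\alpha,\beta,\Omega)$; since $(\mathcal{M}(\gamma),\tau(\mathcal{H}_{0},\mathcal{H}_{1}))$ is compact metrisable (\Cref{thm:nonlocal-H-convergence-comes-from-a-topology}) and the local $\Htopo$-topology is Hausdorff (\Cref{thm:HConvergenceHausdorffCompactMetrisable}), a sequential compactness/uniqueness-of-limits argument along the lines of the proof of \Cref{thm:localsums} then upgrades the forward implication to the equivalence.
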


\subsection{$\Htopo$-convergence in 2D}
Finally, we will apply \Cref{thm:H-converges-equivalent-to-Schur-convergence} in two dimensions in \Cref{sec:examples}. Fortunately, this easily turns out to work exactly as expected and is in fact very similar to the 3D case.

\begin{remark}[Helmholtz Decomposition in 2D]\label{re:HelmholtzDecom2D}
Let $\Omega\subseteq \R^2$ be open and simply connected with the segment property. Define $J\coloneqq\begin{psmallmatrix}0&-1\\ 1&0\end{psmallmatrix}$. Then, it is known that\footnote{Recall that the ranges are closed: Indeed, boundedness and the segment property of $\Omega$ yield that the Rellich--Kondrachov selection theorem holds (both $\sobH^{1}(\Omega)\hookrightarrow \Lp{2}(\Omega)$ and $\cH^{1}(\Omega)\hookrightarrow \Lp{2}(\Omega)$ are compact embeddings). Thus, the claim follows by a standard argument for $\grad$ and $\cgrad$,  see, e.g., again the FA-Toolbox in \cite{PaZu23}, and the fact that $J$ is a topological isomorphism.}
\begin{equation}\label{eq:HD2D}
\Lp{2}(\Omega)^2=\ran(\cgrad)\oplus \ran(J\grad)= \ran(\grad)\oplus \ran(J\cgrad).
\end{equation}
Since this statement seems to be difficult to find (or follows from more involved statements involving Betti numbers and differential forms, see, e.g., \cite{DS52,Pi79}), we provide a short argument here.

Clearly, we have (see also  \cite[Appendix]{Pa15})
\begin{equation*}
\Lp{2}(\Omega)^2=\ran(\cgrad)\oplus \ran(J\grad)\oplus \mathcal{H}_{\mathrm{D}}= \ran(\grad)\oplus \ran(J\cgrad)\oplus  \mathcal{H}_{\mathrm{N}},
  \end{equation*}
  where
  \begin{equation*}
      \mathcal{H}_{\mathrm{D}} \coloneqq \ker(\div)\cap \ker(\cdiv (-J)),\text{ and }      \mathcal{H}_{\mathrm{N}} \coloneqq \ker(\cdiv)\cap \ker(\div (-J))
  \end{equation*}
  are the harmonic Dirichlet and Neumann fields in two spatial dimensions. Note that
  \begin{equation*}
     \dim \mathcal{H}_{\mathrm{D}} = \dim \mathcal{H}_{\mathrm{N}}:
  \end{equation*}
 Indeed, for $q \in \Lp{2}(\Omega)^2$, we have $q\in  \mathcal{H}_{\mathrm{D}}$ if and only if $Jq \in  \mathcal{H}_{\mathrm{N}}$. Thus, in order to prove \eqref{eq:HD2D} it remains to show $\mathcal{H}_{\mathrm{N}}=\{0\}$. For this, let $f\in\mathcal{H}_{\mathrm{N}}$. Note that without loss of generality, we may assume that $f$ attains values in $\R$ only. Then $g (x+\iu y)\coloneqq f_1(x,y)-\iu f_2(x,y)$ for all $(x,y)\in \Omega \subseteq \R^2 \cong \C$ defines a complex-valued function with $\Re g=f_1$ and $\Im g = -f_2$. It is not difficult to see that, locally as $f\in \mathcal{H}_{\mathrm{N}}$, $g$ satisfies the Cauchy--Riemann equations in a distributional sense. In particular, $\Re g=f_1$ and $\Im g=-f_2$ are harmonic distributions and, by Weyl's lemma, $f_1,f_2\in \mathrm{C}^\infty(\Omega)$. In consequence, $g$ is holomorphic. Since $\Omega$ is simply connected, by  Cauchy's integral theorem, there exists a potential $G$ such that $G'=g$. Then, consider $F(x,y)\coloneqq \Re G(x+\iu y)$ and compute for $(x,y)\in \Omega$
 \begin{align*}
     \partial_x F(x,y) &= \Re G'(x+\iu y) = \Re g(x+\iu y) = f_1(x,y) \text{ and }\\
     \partial_y F(x,y) &= \Re \iu G'(x+\iu y) = -\Im g(x+\iu y) = f_2(x,y).
 \end{align*} Next, as $\Omega$ has the segment property, \cite[Theorem 3.2(2)]{Ag10} applies to an approximating sequence using the shift technique similar to the one considered in \cite[p.~170]{Pi82}, and since $f\in L_2(\Omega)^2$, we infer that $F\in \dom(\grad)$. By assumption, $\grad F = f\in \ker(\cdiv)$. In particular,
 \[
    \langle \grad F, \grad F \rangle = -    \langle  F,\cdiv \grad F \rangle = 0,
 \]which implies that $F$ is constant and, hence, $f=0$ as desired.

 As a consequence of the above decomposition, we also get $\ran(\grad)=\ker(\div J)$, $\ran(\cgrad)=\ker(\cdiv J)$, $\ker(\div)=\ran(J\grad)$ and $\ker(\cdiv)=\ran(J\cgrad)$.
\end{remark}

With this, we get yet another variant of \Cref{thm:H-converges-equivalent-to-Schur-convergence}.

\begin{theorem}\label{thm:H-converges-equivalent-to-Schur-convergence-2D}
Let $\Omega\subseteq\R^2$ be open, bounded and simply connected satisfying the segment property. For $0<\alpha<\beta$, a  sequence $(a_n)_{n\in\N}$ from
 $M(\alpha,\beta,\Omega)$ locally $\Htopo$-converges to an $a\in M(\alpha,\beta,\Omega)$ if and only if it
 $\tau(\ran(\cgrad),\ran(J\grad))$-converges to $a$.
\end{theorem}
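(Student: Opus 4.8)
The plan is to rerun the proof of \Cref{thm:H-converges-equivalent-to-Schur-convergence}, i.e.\ \cite[Theorem~5.11]{Wa18}, verbatim, with the three-dimensional Helmholtz decomposition of \Cref{re:HelmhDecompTopoTrivial} replaced by its two-dimensional analogue from \Cref{re:HelmholtzDecom2D}. Set $\mathcal{H}\coloneqq\Lp{2}(\Omega)^{2}$, $\mathcal{H}_{0}\coloneqq\ran(\cgrad)$, $\mathcal{H}_{1}\coloneqq\ran(J\grad)$; by \Cref{re:HelmholtzDecom2D} this is an orthogonal decomposition $\mathcal{H}=\mathcal{H}_{0}\oplus\mathcal{H}_{1}$ with $\mathcal{H}_{1}=\ker(\div)$ and $\mathcal{H}_{0}=\ker(\cdiv J)$, so $J\grad$ and $J\cgrad$ take over the roles played by $\curl$ and $\ccurl$ in three dimensions. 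Let $P_{0},P_{1}$ be the associated orthogonal projections. Since the pointwise bounds defining $M(\alpha,\beta,\Omega)$ force $\Re a\geq\alpha$ and $\norm{a}\leq\beta$ for the multiplication operator $a$ on $\mathcal{H}$ (the inequality $\Re a(x)^{-1}\geq\tfrac{1}{\beta}$ implies $\norm{a(x)}\leq\beta$), and Schur complements inherit the lower bound $\Re\argdot\geq\alpha$, we get $M(\alpha,\beta,\Omega)\subseteq\mathcal{M}(\gamma)$ for a $\gamma\in(0,\infty)^{2\times2}$ depending only on $\alpha,\beta$; in particular $\tau(\mathcal{H}_{0},\mathcal{H}_{1})$-convergence is meaningful on $M(\alpha,\beta,\Omega)$.

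The only genuinely computational step is to recast the Dirichlet problem~\labelcref{PDElocalHconvergSequence} in block-operator form. As $\Omega$ is bounded, $\cgrad\colon\cH^{1}(\Omega)\to\mathcal{H}_{0}$ is a topological isomorphism by the Poincaré inequality, so each $f\in\sobH^{-1}(\Omega)$ is represented by a unique $g\in\mathcal{H}_{0}$ via $f(v)=\scprod{g}{\cgrad v}_{\mathcal{H}}$ for $v\in\cH^{1}(\Omega)$, and $f\mapsto g$ is an isomorphism $\sobH^{-1}(\Omega)\to\mathcal{H}_{0}$. Setting $v_{n}\coloneqq\cgrad u_{n}\in\mathcal{H}_{0}$, equation~\labelcref{PDElocalHconvergSequence} reads $P_{0}a_{n}v_{n}=g$, i.e.\ $a_{n,00}v_{n}=g$, so $v_{n}=a_{n,00}^{-1}g$, $P_{0}a_{n}\cgrad u_{n}=g$, and $P_{1}a_{n}\cgrad u_{n}=a_{n,10}a_{n,00}^{-1}g$. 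Since $\cgrad$ is an isomorphism onto the closed subspace $\mathcal{H}_{0}$ and $g$ runs through all of $\mathcal{H}_{0}$ as $f$ runs through $\sobH^{-1}(\Omega)$, local $\Htopo$-convergence of $(a_{n})_{n\in\N}$ to $a$ is \emph{equivalent} to the weak operator convergences $a_{n,00}^{-1}\to a_{00}^{-1}$ and $a_{n,10}a_{n,00}^{-1}\to a_{10}a_{00}^{-1}$ — the first two of the four conditions in \Cref{def:nonlocal-H-convergence}. In particular ``$\tau(\mathcal{H}_{0},\mathcal{H}_{1})$-convergence $\Rightarrow$ local $\Htopo$-convergence'' already follows. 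Applying the same computation to the adjoint sequence $(a_{n}^{\ast})_{n\in\N}\subseteq M(\alpha,\beta,\Omega)$, which locally $\Htopo$-converges to $a^{\ast}$ precisely when $(a_{n})_{n\in\N}$ does to $a$ (a standard fact of the local theory, cf.\ \cite[Chapter~6]{Ta09}), and using $(a_{n}^{\ast})_{10}(a_{n}^{\ast})_{00}^{-1}=(a_{n,00}^{-1}a_{n,01})^{\ast}$ together with the invariance of the weak operator topology under adjoints, produces the third condition $a_{n,00}^{-1}a_{n,01}\to a_{00}^{-1}a_{01}$. The remaining (Schur-complement) condition and the converse implication follow exactly as in \cite[Theorem~5.11]{Wa18}, by a compactness argument of the same shape as the proof of the independence-of-boundary-conditions lemma above: the construction proving metrisability in \Cref{thm:HConvergenceHausdorffCompactMetrisable} shows local $\Htopo$-convergence on $M(\alpha,\beta,\Omega)$ to be induced by a Hausdorff topology, \Cref{thm:nonlocal-H-convergence-comes-from-a-topology} equips $M(\alpha,\beta,\Omega)\subseteq\mathcal{M}(\gamma)$ with the compact metrisable Schur topology (and \Cref{thm:nonlocal-H-convergence-symm-wrt-inv} handles the alternative $\tau(\mathcal{H}_{1},\mathcal{H}_{0})$-picture via the inverse sequence), so the identity map from the former compact topology to the latter Hausdorff topology, continuous by what was just shown, is a homeomorphism and the two notions of convergence with limit in $M(\alpha,\beta,\Omega)$ coincide.

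The genuinely two-dimensional ingredient — and the only real obstacle — is the Helmholtz decomposition~\labelcref{eq:HD2D} with \emph{closed} ranges together with the identities $\mathcal{H}_{0}=\ran(\cgrad)=\ker(\cdiv J)$ and $\mathcal{H}_{1}=\ker(\div)=\ran(J\grad)$, i.e.\ the vanishing of the harmonic Neumann fields for simply connected $\Omega$ with the segment property; this is exactly what \Cref{re:HelmholtzDecom2D} supplies, via the complex-analysis argument recorded there. Everything else in the proof is dimension-agnostic. The one auxiliary point to double-check is that the segment property — weaker than the weak Lipschitz hypothesis of \Cref{thm:H-converges-equivalent-to-Schur-convergence} — still delivers the compact embeddings $\sobH^{1}(\Omega)\hookrightarrow\Lp{2}(\Omega)$ and $\cH^{1}(\Omega)\hookrightarrow\Lp{2}(\Omega)$ underpinning both the closedness of the ranges and the well-posedness and $\alpha,\beta$-uniform bounds for~\labelcref{PDElocalHconvergSequence} and~\labelcref{PDElocalHconvergSolution}, which is precisely what the footnote to \Cref{re:HelmholtzDecom2D} records.
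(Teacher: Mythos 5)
Your approach is the same as the paper's: rerun \cite[Theorem~5.11]{Wa18} with the two-dimensional Helmholtz decomposition of \Cref{re:HelmholtzDecom2D} in place of the three-dimensional one. The paper's proof is essentially this single observation plus the remark that the underlying \cite[Theorem~4.10]{Wa18} is abstract. Your block-operator reduction of the Dirichlet problem is correct and does show that local $\Htopo$-convergence is equivalent to WOT-convergence of $a_{n,00}^{-1}$ and $a_{n,10}a_{n,00}^{-1}$, and your adjoint argument correctly produces the third Schur condition. The verification $M(\alpha,\beta,\Omega)\subseteq\mathcal{M}(\gamma)$ is also fine.

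Where your elaboration goes wrong is the ``compactness argument'' at the end. You assert continuity of the identity ``from the former compact topology [local $\Htopo$] to the latter Hausdorff topology [Schur]'' and say it is ``continuous by what was just shown''. But what you have shown is the \emph{reverse} implication, Schur $\Rightarrow$ local $\Htopo$, which gives continuity of the identity \emph{from} the Schur topology \emph{to} the local $\Htopo$-topology, not the other way around. In that correct direction, the compact-to-Hausdorff argument would require the Schur topology restricted to $M(\alpha,\beta,\Omega)$ to be compact, i.e.\ $M(\alpha,\beta,\Omega)$ to be Schur-closed inside $\mathcal{M}(\gamma)$ --- which is not established a priori and is essentially equivalent to the statement being proved, so the argument as written is circular. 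Concretely: if a locally $\Htopo$-convergent sequence has a Schur-convergent subsubsequence with limit $b\in\mathcal{M}(\gamma)$, your computations pin down $b_{00}=a_{00}$, $b_{10}=a_{10}$ and (via the adjoint trick) $b_{01}=a_{01}$, but nothing you have said determines the Schur complement $b_{11}-b_{10}b_{00}^{-1}b_{01}$; local $\Htopo$-convergence plus adjoints only gives three of the four conditions in \Cref{def:nonlocal-H-convergence}. Some additional input --- in \cite{Wa18} a compensated-compactness / div-curl type argument, or equivalently the dual Neumann picture that \Cref{thm:H-converges-equivalent-to-Schur-convergence-diff-BD} encodes --- is what actually closes this. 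Since you ultimately defer to \cite[Theorem~5.11]{Wa18} for exactly this step, just as the paper does, the overall strategy is sound; but the compactness reconstruction you interpose in place of that deferral does not work as stated.
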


\begin{proof}
The short proof of \cite[Theorem~5.11]{Wa18} only needs $\R^3$ to use the classical Helmholtz decomposition. The underlying \cite[Theorem~4.10]{Wa18} works on an abstract Hilbert space level.
Hence, using \Cref{re:HelmholtzDecom2D}, we can readily rewrite \cite[Theorem~5.11]{Wa18} for $\R^2$.
\end{proof}

\section{Examples}\label{sec:examples}
In this section, we will present a range of homogenisation examples (and their respective limits) that fall into the regime of \Cref{thm:main-nonlocal-homogenisation-result}. We focus on the pre-asymptotic homogenisation problems first and provide the respective limits after that.
All of them can be written in the form
\begin{equation}\label{eq:modelSeqEvolEqForHomogen}
  \bigl(\partial_{t,\nu}M_{0,n} + M_{1,n} + A\bigr) U = F\text{,}
\end{equation}
for $M_{0,n},M_{1,n}\in \Lb(\mathcal{H}),n\in\mathbb{N},$ and suitable $\nu>0$.
In other words, the material laws are of the form $M_n(z)\coloneqq M_{0,n} + z^{-1}M_{1,n}$ for $n\in\N$ and $z\in\C\setminus \set{0}$
which readily implies $M_n\in \mathcal{M}(\mathcal{H},\nu)$ for $n\in\N$ and for all $\nu>0$.
These examples will illustrate different convergence phenomena corresponding to the respective different decompositions $\mathcal{H} = \mathcal{H}_{0} \oplus \mathcal{H}_{1}=\ker(A)\oplus\ran(A)$.

We state the examples next.

\subsection{The Examples}
One of the standard situations of \Cref{thm:main-nonlocal-homogenisation-result} is the case when $A=0$, i.e., an ODE, in which case one can choose $\mathcal{H}_{1} = \set{0}$ and $\mathcal{H}_{0} = \mathcal{H}=\ker(A)$.
This means only weak convergence of the solutions can be expected.

\begin{example}[An ordinary differential equation]\label{ex:ode}
Consider
  \begin{equation}\label{eq:SineParODEtobeHomogenized}
    \partial_{t,\nu} u_n(t,x) + \sin\left(2\uppi n x\right)u_n(t,x) = f(t,x),
  \end{equation}
  where $\Omega\coloneqq (0,1)$, $\mathcal{H}\coloneqq\Lp{2}(\Omega)$, and $\nu>2$.

  Clearly, we have
  \begin{equation*}
  \forall h\in\Lp{2}(\Omega): \Re \scprod{h}{zh}_{\Lp{2}(\Omega)}+\Re \scprod{h}{\sin(2\uppi n \argdot)h}_{\Lp{2}(\Omega)}\geq \norm{h}^{2}_{\Lp{2}(\Omega)}\text{,}
  \end{equation*}
  and
  \begin{equation*}
  \forall h\in\Lp{2}(\Omega): \norm{h+z^{-1}\sin(2\uppi n \argdot)h}_{\Lp{2}(\Omega)}\leq 2\norm{h}_{\Lp{2}(\Omega)}
  \end{equation*}
  for $\Re z> 2$ and $n\in\mathbb{N}$.

  Thus, we are in the setting of \Cref{thm:main-nonlocal-homogenisation-result} with $\nu_0\coloneqq 2$. In particular,~\labelcref{eq:SineParODEtobeHomogenized}
  is well-posed (in the sense of \Cref{thm:wpee} with $A\coloneqq 0$) for each $n\in\mathbb{N}$, $\nu>\nu_0$, and $f\in\Lp[\nu]{2}(\R;\mathcal{H})$, and by~\cite{Wa16a}
  \begin{equation}\label{eq:explicitSolutionSeqODEexample}
    u_n(t,x)=\int_{-\infty}^t \euler^{-(t-s)\sin(2\uppi nx)}f(s,x) \dx[s]
  \end{equation}
  gives the solution sequence explicitly.
\end{example}

The other extreme case is $\mathcal{H}_{0} = \set{0}$, meaning $A$ is one-to-one and the Hilbert space $\dom(A)$ compactly embeds into $\mathcal{H}_{1} =\mathcal{H}=\ran(A)$. Hence, we obtain strong convergence
of the solutions. In fact, a finite-dimensional kernel is already sufficient for strong convergence on the whole space since the weak and strong operator topology coincide on finite-dimensional spaces.

In the following we model oscillations with the indicator function $\indicator_{O_{n}}\colon \R \to \set{0,1}$ of the set
\begin{equation*}
  O_{n} \coloneqq \bigcup_{k \in \Z} \bigl\lparen \tfrac{2k}{2n}, \tfrac{2k+1}{2n}\bigr\rparen,n\in\N,
\end{equation*}
see \Cref{fig:indicator}. Note that $\indicator_{O_{n}}(x) = \indicator_{O_{1}}(nx)$ for $x\in\R$ and $n\in\N$.

\begin{example}[Finite-dimensional kernel]\label{ex:1d-pde-with-compactness}
  This example appears in \cite{FrWa18}. Let $n\in\N$ and set
  \begin{equation*}
    \epsilon_{n}(x) \coloneqq \indicator_{O_{n}}(x),\quad
    \sigma_{n}(x) \coloneqq 1 - \indicator_{O_{n}}(x)\text{.}
  \end{equation*}
  We consider the following rough-coefficient PDE\footnote{For $\mathrm{C}^{\infty}_{\#}(0,1)\coloneqq\{\varphi\restrict_{(0,1)}:\varphi\in\mathrm{C}^{\infty}(\R),\varphi(\argdot+1)=\varphi(\argdot)\}$, we define
  $\partial_{x}\restrict_{\mathrm{C}^{\infty}_{\#}}\colon \mathrm{C}^{\infty}_{\#}(0,1)\subseteq\Lp{2}(0,1)\to \Lp{2}(0,1)$ as the usual derivative. Weakly extending, we obtain
  $\partial_{\#}\coloneqq-(\partial_{x}\restrict_{\mathrm{C}^{\infty}_{\#}})^{\ast}$ and $\partial_{\#}=-(\partial_{\#})^{\ast}=\overline{\partial_{x}\restrict_{\mathrm{C}^{\infty}_{\#}}}$ with the domain
  as Hilbert space $\sobH_{\#}^1(0,1)$.}
  \begin{equation}\label{eq:RoughCoeff1DPDE}
    \left[
      \partial_{t,\nu}
      \begin{pmatrix}
        \epsilon_{n} & 0\\
        0 & 1
      \end{pmatrix}
      +
      \begin{pmatrix}
        \sigma_{n} & 0\\
        0 & 0
      \end{pmatrix}
      +
      \begin{pmatrix}
        0 & \partial_{\#} \\
        \partial_{\#} & 0
      \end{pmatrix}
    \right] U_{n}
    = F,
  \end{equation}
  where $\Omega\coloneqq (0,1)$, $\mathcal{H}\coloneqq\Lp{2}(\Omega)^2$, $\nu>\varepsilon$ for an arbitrary $\varepsilon>0$, $A\coloneqq\begin{psmallmatrix}
        0 & \partial_{\#} \\
        \partial_{\#} & 0
      \end{psmallmatrix}$, and $M_n(z)\coloneqq\begin{psmallmatrix}
        \epsilon_{n} & 0\\
        0 & 1
      \end{psmallmatrix}
      +
      z^{-1}
      \begin{psmallmatrix}
        \sigma_{n} & 0\\
        0 & 0
      \end{psmallmatrix}$
      for $n\in\mathbb{N}$ and $\Re z >\varepsilon$.

      Obviously, $\Re z M_n(z)\geq\min\{1,\varepsilon\}$ and $\norm{M_n(z)}\leq\max\{1,1/\varepsilon\}$ hold for $\Re z >\varepsilon$ and $n\in\mathbb{N}$.
      Moreover, $\sobH_{\#}^1(0,1)$ compactly embeds into $\Lp{2}(\Omega)$, by the Rellich--Kondrachov theorem.

      Hence, we are in the setting of \Cref{thm:main-nonlocal-homogenisation-result} with
      $\nu_0\coloneqq \varepsilon$, and $\mathcal{H}_0=\ker(A)$ is the one-dimensional space of constant functions in $\Lp{2}(\Omega)$ times itself.
      In particular,~\labelcref{eq:RoughCoeff1DPDE} is well-posed (in the sense of \Cref{thm:wpee}) for each $n\in\mathbb{N}$, $\nu>\nu_0$, and $F\in\Lp[\nu]{2}(\R;\mathcal{H})$.
\end{example}

\begin{figure}[h]
  \centering
  \begin{tikzpicture}[xscale=2, yscale=1.5]
  \coordinate (x) at (2.2,0);
  \coordinate (-x) at (-2.2,0);

  \coordinate (y) at (0,1.5);
  \coordinate (one) at (0,1);

  \coordinate (v) at ($1/1.5*(0,0.1)$); 
  \coordinate (h) at ($1/2*(0.1,0)$); 

  \draw[<->] (-x) -- (x);
  \draw[->] (0,-0.5) -- (y);

  \draw ($(one) + (h)$) -- ($(one) - (h)$);

  \node[below] at (x) {$x$};
  \node[left] at (one) {$1$};

  \foreach \x in {-2,...,1}{
    \draw[very thick] (\x,1) -- (\x + 0.5,1);
    \draw[very thick] (\x + 0.5,0) -- (\x + 1,0);
  }

  \foreach \x in {-2,-1.5,...,2}{
    \draw ($(\x,0) + (v)$) -- ($(\x,0) - (v)$);
    \node[below] at ($(\x,0) - (v)$) {$\x$};
  }

\end{tikzpicture}

  \caption{\label{fig:indicator}Graph of $\indicator_{O_{n}}$ for $n=1$}
\end{figure}

The intermediate case, meaning both $\ker(A)$ and $\ran(A)$ are infinite-dimensional, has Maxwell's equations as a prominent example.
We will additionally provide some other PDEs with such spatial derivative operators $A$.

\begin{example}[Infinite-dimensional kernel and range]\label{ex:pde-without-compactness}
  We provide several examples in different spatial dimensions and with different spatial derivatives.
  \begin{enumerate}
    \item\label{item:1D-glueing-example}
          The first example in one spatial dimension reads ($\indicator_{(-1,0)}$ is the indicator function of $(-1,0)$, $\partial_x$ stands for the weak derivative on $\Lp{2}(-1,1)$ and $^{\ast}$ denotes the $\Lp{2}$-adjoint
          operator)
          \begin{equation}\label{eq:1D-glueing-example-sequence}
            \left[
              \partial_{t,\nu}
              \smash[b]{\underbrace{%
                  \begin{pmatrix}
                    1 & 0\\
                    0 & 1
                  \end{pmatrix}
                }_{\eqqcolon\mathrlap{M_{0,n}}}
                +
                \underbrace{
                  \begin{pmatrix}
                    \sin\left(2\uppi n \argdot\right) & 0\\
                    0 & \sin\left(2\uppi n \argdot\right)
                  \end{pmatrix}%
                }_{\eqqcolon\mathrlap{M_{1,n}}}
              }%
              +
              \smash[b]{\underbrace{%
                \begin{pmatrix}
                0 & \partial_x\indicator_{(-1,0)}\\
                -(\partial_x\indicator_{(-1,0)})^{\ast} & 0
                \end{pmatrix}%
              }_{\eqqcolon\mathrlap{A}}}
            \right]
            U_n
            = F,
            \vphantom{%
              \underbrace{%
                  \begin{pmatrix}
                    0 & \partial_x\indicator_{[-1,0]}\\
                    -(\partial_x\indicator_{[-1,0]})^{\ast} & 0
                  \end{pmatrix}%
                }_{=\mathrlap{A}}
            }
          \end{equation}
          where $\Omega\coloneqq (-1,1)$, $\mathcal{H}\coloneqq\Lp{2}(\Omega)^{2}$ and $\nu>2$.

          Note that the operator $\partial_{x} \indicator_{(-1,0)}$ on $\Lp{2}(-1,1)$ can be decomposed in a block operator by decomposing $\Lp{2}(-1,1)$ into $\Lp{2}(-1,0) \oplus \Lp{2}(0,1)$. Because of the Sobolev embedding theorem, the corresponding block operator is then
          \begin{equation*}
            \partial_{x} \indicator_{(-1,0)} =
            \begin{pmatrix}
              \mathring{\partial}_{x,\{0\}} & 0 \\
              0 & 0
            \end{pmatrix},
          \end{equation*}
          where $\mathring{\partial}_{x,\{0\}}$ is the derivative on $\Lp{2}(-1,0)$ with domain $\dset{f \in \sobH^{1}(-1,0)}{f(0) = 0}$, i.e., the product of $\partial_{x}$ and $\indicator_{(-1,0)}$ gives a ``boundary'' condition in the middle of the interval.
          Using a direct computation and integration by parts, the $\Lp{2}$-adjoint is then
          \begin{equation*}
            -(\partial_{x} \indicator_{(-1,0)})^{\ast} =
            \begin{pmatrix}
              -(\mathring{\partial}_{x,\{0\}})^{\ast} & 0 \\
              0 & 0
            \end{pmatrix}
            =
            \begin{pmatrix}
              \mathring{\partial}_{x,\{-1\}} & 0 \\
              0 & 0
            \end{pmatrix},
          \end{equation*}
          where $\mathring{\partial}_{x,\{-1\}}$ is the derivative on $\Lp{2}(-1,0)$ with domain $\dset{f \in \sobH^{1}(-1,0)}{f(-1) = 0}$.
          This naturally induces a decomposition of $\Lp{2}(-1,1)^{2}$ into
          \begin{equation*}
            \underbrace{\Lp{2}(-1,0)^{2}}_{=\mathrlap{\mathcal{H}_{1}}} \mathclose{} \oplus \mathopen{}\underbrace{\Lp{2}(0,1)^{2}}_{=\mathrlap{\mathcal{H}_{0}}},
          \end{equation*}
          since we easily obtain $\Lp{2}(-1,0)^{2}=\ran A$ and $\Lp{2}(0,1)^{2}=\ker A$. Moreover, the Rellich--Kondrachov theorem implies that $\dom(A)\cap\mathcal{H}_{1}$
          compactly embeds into $\mathcal{H}$.

          Thus, similarly to \Cref{eq:SineParODEtobeHomogenized}, we are in the setting of \Cref{thm:main-nonlocal-homogenisation-result} with $\nu_0\coloneqq 2$. In particular,~\labelcref{eq:1D-glueing-example-sequence}
          is well-posed (in the sense of \Cref{thm:wpee}) for each $n\in\mathbb{N}$, $\nu>\nu_0$, and $F\in\Lp[\nu]{2}(\R;\mathcal{H})$.
          \end{enumerate}
Going for two spatial dimensions we consider stratified coefficients, i.e., they oscillate only in one direction. For that purpose, we extend $\indicator_{O_{n}}$ (without changing the notation) to
$\indicator_{O_{n}}\colon \R^2 \to \set{0,1}$ via $\indicator_{O_{n}}(x,y)\coloneqq \indicator_{O_{n}}(x)$ for all $x,y\in\R$.
        \begin{enumerate}[resume]
    \item\label{item:ex3} 
          For $\Omega \coloneqq (-2,2)^{2}$, $\Omega_{1} \coloneqq (-1,1)^{2} $ and its indicator function $\indicator_{\Omega_{1}}$ and arbitrary constants $\epsilon_0,\mu_0>0$, consider the problem given by
          $\nu>\varepsilon$ for an arbitrary $\varepsilon>0$ and the following operators on $\mathcal{H}\coloneqq\Lp{2}(\Omega) \oplus \Lp{2}(\Omega)^{2}$:
          \begin{equation}\label{eq:first2DexampleMatLawSpatOp}
          \begin{aligned}
            M_{0,n}
            &\coloneqq\indicator_{\Omega_{1}}
              \begin{pmatrix}
                1-\indicator_{O_{n}} & 0\\
                0 & 1+\indicator_{O_{n}}
              \end{pmatrix}
              + (1-\indicator_{\Omega_{1}})
              \begin{pmatrix}
                \epsilon_0 & 0\\
                0 & \mu_0
              \end{pmatrix},\\
            M_{1,n}
            &\coloneqq\indicator_{\Omega_{1}}
              \begin{pmatrix}
                \indicator_{O_{n}}& 0\\
                0 & 0
              \end{pmatrix},
              \quad
              A \coloneqq
              \begin{pmatrix}
                0 & \div\\
                \cgrad& 0
              \end{pmatrix}.
          \end{aligned}
          \end{equation}
          Note that the oscillating coefficient is only active on the subdomain $\Omega_{1}$.
          Thus, we have a non-periodic anisotropic homogenisation problem.

	Obviously, $\Re z M_n(z)\geq\min \{\varepsilon\epsilon_0,\varepsilon \mu_0,1,\varepsilon\}$ and $\norm{M_n(z)}\leq\max\{\epsilon_0,\mu_0,2,1/\varepsilon\}$ hold for $n\in\mathbb{N}$ and $\Re z >\varepsilon$.
          By \Cref{re:HelmholtzDecom2D}, $\mathcal{H}$ decomposes into
          \begin{equation}\label{eq:kerranAdecomp2Dfirstex}
            \mathcal{H}_{0}=\ker A = \begin{pmatrix} \{0\} \\ \ran(J\grad) \end{pmatrix}
            \quad\text{and}\quad
            \mathcal{H}_{1}=\ran A = \begin{pmatrix} \Lp{2}(\Omega) \\ \ran (\cgrad) \end{pmatrix}.
          \end{equation}
          By the Rellich--Kondrachov theorem, $\cH^1(\Omega)=\cH^1(\Omega)\cap\ran (\div)=\cH^1(\Omega)\cap\ker(\cgrad)^{\perp}$
          compactly embeds into $\Lp{2}(\Omega)$. Going to the adjoints, we easily infer that $\sobH(\div,\Omega)\cap\ran (\cgrad)=\sobH(\div,\Omega)\cap\ker(\div)^{\perp}$
          also compactly embeds into $\Lp{2}(\Omega)^2$, so $\dom(A)\cap\mathcal{H}_{1}$ compactly embeds into $\mathcal{H}$.

          Alltogether, we are in the setting of \Cref{thm:main-nonlocal-homogenisation-result} with $\nu_0\coloneqq \varepsilon$. In particular, the PDE~\labelcref{eq:modelSeqEvolEqForHomogen}
           arising from~\labelcref{eq:first2DexampleMatLawSpatOp}
          is well-posed (in the sense of \Cref{thm:wpee}) for each $n\in\mathbb{N}$, $\nu>\nu_0$, and $F\in\Lp[\nu]{2}(\R;\mathcal{H})$.

    \item\label{item:ex4}
    The same as~\labelcref{item:ex3} with slightly different $M_{0,n}$ and $M_{1,n}$:
          \begin{align*}
            M_{0,n}
            &\coloneqq\indicator_{\Omega_{1}}
              \begin{pmatrix}
                1+\indicator_{O_{n}} & 0\\
                0 & 1-\indicator_{O_{n}}
              \end{pmatrix}
              + (1-\indicator_{\Omega_{1}})
              \begin{pmatrix}
                \epsilon_0 & 0\\
                0 & \mu_0
              \end{pmatrix},
            \\
            M_{1,n}
            &\coloneqq\indicator_{\Omega_{1}}
              \begin{pmatrix}
                0 & 0\\
                0 & \indicator_{O_{n}}
              \end{pmatrix},
              \quad
              A \coloneqq
              \begin{pmatrix}
                0 & \div\\
                \cgrad & 0
              \end{pmatrix}
              .
          \end{align*}
\end{enumerate}
In three dimensions, we also consider the stratified case, and we extend $\indicator_{O_{n}}$ once more (without changing the notation) to
$\indicator_{O_{n}}\colon \R^3 \to \set{0,1}$ via $\indicator_{O_{n}}(x,y,z)\coloneqq \indicator_{O_{n}}(x)$ for all $x,y,z\in\R$.
\begin{enumerate}[resume]
    \item\label{item:ex5} The fourth and final example are Maxwell's equations (cf.\ \Cref{ex:MaxwellAsEvolEq}). For $\Omega \coloneqq (-2,2)^{3}$, $\Omega_{1} \coloneqq (-1,1)^{3}$ and its indicator function $\indicator_{\Omega_{1}}$
    and arbitrary constants $\epsilon,\mu,\sigma,\epsilon_0,\mu_0>0$, consider the problem given by
          $\nu>\varepsilon$ for an arbitrary $\varepsilon>0$ and the following operators on $\mathcal{H}\coloneqq\Lp{2}(\Omega)^{3} \oplus \Lp{2}(\Omega)^{3}$:
          \begin{equation}\label{eq:MaxwellExampleMatLawSpatOp}
          \begin{aligned}
            M_{0,n}
            &\coloneqq\indicator_{\Omega_{1}}
              \begin{pmatrix}
                \epsilon(1-\indicator_{O_{n}}) & 0\\
                0 & \mu(1+\indicator_{O_{n}})
              \end{pmatrix}
              +(1-\indicator_{\Omega_{1}})
              \begin{pmatrix}
                \epsilon_0 & 0\\
                0 & \mu_0
              \end{pmatrix},
            \\
            M_{1,n}
            &\coloneqq\indicator_{\Omega_{1}}
              \begin{pmatrix}
                \sigma\indicator_{O_{n}} & 0\\
                0 & 0
              \end{pmatrix},
              \quad
              A \coloneqq
              \begin{pmatrix}
                0 & -\curl\\
                \ccurl & 0
              \end{pmatrix}.
          \end{aligned}
          \end{equation}

	For $n\in\mathbb{N}$ and $\Re z >\varepsilon$,
          we have $\Re z M_n(z)\geq\min(\varepsilon\epsilon_0,\varepsilon\mu_0,\sigma,\varepsilon\epsilon,\varepsilon\mu)$ and $\norm{M_n(z)}\leq\max(\epsilon_0,\mu_0,2\mu,\sigma/\varepsilon,\epsilon)$.
          By \Cref{re:HelmhDecompTopoTrivial}, the kernel and range of $A$ yield the following decomposition of $\mathcal{H}$:
          \begin{equation}\label{eq:kerranAdecomp3DMaxwellex}
            \mathcal{H}_{0} = \begin{pmatrix} \ker (\ccurl) \\ \ker (\curl) \end{pmatrix}
            = \begin{pmatrix} \ran (\cgrad) \\ \ran (\grad) \end{pmatrix}
            \quad\text{and}\quad
            \mathcal{H}_{1} = \begin{pmatrix} \ran (\curl) \\ \ran (\ccurl) \end{pmatrix}\text{.}
          \end{equation}
          Moreover, we obtain that $\cH(\curl,\Omega)\cap\ran(\curl)=\cH(\curl,\Omega)\cap\ker(\div)$ and $\sobH(\curl,\Omega)\cap\ran(\ccurl)=\sobH(\curl,\Omega)\cap\ker(\cdiv)$ are each compactly embedded into
          $\Lp{2}(\Omega)^{3}$ (Picard--Weber--Weck selection theorem; see \cite{Pi84}).

          Therefore, we are in the setting of \Cref{thm:main-nonlocal-homogenisation-result} with $\nu_0\coloneqq \varepsilon$. In particular, the PDE~\labelcref{eq:modelSeqEvolEqForHomogen}
          arising from~\labelcref{eq:MaxwellExampleMatLawSpatOp}
          is well-posed (in the sense of \Cref{thm:wpee}) for each $n\in\mathbb{N}$, $\nu>\nu_0$, and $F\in\Lp[\nu]{2}(\R;\mathcal{H})$.\qedhere
  \end{enumerate}
\end{example}

\subsection{Homogenisation limits of the examples}\label{sec:examples-limits}
In this section we calculate the homogenisation limits of the previous examples.

\begin{example}[An ordinary differential equation]\label{ex:ode-limit}
  For \Cref{ex:ode}, the convergence of material laws necessary for \Cref{thm:main-nonlocal-homogenisation-result} boils down to finding an $M\colon\C_{\Re>\nu_{0}}\to\Lb(\mathcal{H})$
  with $\norm{M(z)}\leq 2$ and $M(z)^{-1}\in\Lb(\mathcal{H})$ on $\C_{\Re>\nu_{0}}$ such that $M_n(z)^{-1}$ converges in the weak operator topology to $M(z)^{-1}$ for each $z\in\C_{\Re>\nu_{0}}$, where,
  for $n\in\mathbb{N}$ and $z\in\C_{\Re>\nu_{0}}$, the operator
  $M_n(z)\in \Lb(\mathcal{H})$ stands for the multiplication-with-$(1+z^{-1}\sin(2\uppi n \argdot))$ operator.
  Then, the well-posed (in the sense of \Cref{thm:wpee} with $A\coloneqq 0$) limit problem is given by
  \begin{equation*}
  \partial_{t,\nu}M(\partial_{t,\nu})u^{\hom}(t,x)=f(t,x)\text{,}
  \end{equation*}
  for each $\nu>\nu_0$ and $f\in\Lp[\nu]{2}(\R;\mathcal{H})$.

  By~\cite{Wa16a}, we have\footnote{After some tedious but basic calculations, one gets that the operator norm of the double series is strictly smaller than $1$. Thus, one
   indeed obtains $M(z),M(z)^{-1}\in\Lb(\mathcal{H})$ with $\norm{M(z)}\leq 2$ on $\C_{\Re>\nu_{0}}$.\par
   Strictly speaking,~\cite{Wa16a} directly shows convergence of the solution
   operators. Convergence of the corresponding material laws follows by compactness, see, e.g., \cite[Theorem~5.6 and Lemma~5.7]{BuSkWa24}, and the unique correspondence between material laws and their respective operators.}
  \begin{equation}\label{eq:limitMatLawODEDoubleSeries}
  M(z)=1 + \sum_{j=1}^{\infty}
    \left(-\sum_{m=1}^{\infty} \frac{(2m)!}{(2^{m} m!)^{2}}z^{-2m}\right)^{j}
  \end{equation}
  for $z\in\C_{\Re>\nu_{0}}$.

  Therefore, the homogenised problem reads
  \begin{equation*}
    \partial_{t,\nu} u^{\hom}(t,x) + \sum_{j=1}^{\infty} \partial_{t,\nu}
    \left(-\sum_{m=1}^{\infty} \frac{(2m)!}{(2^{m} m!)^{2}}\partial_{t,\nu}^{-2m}\right)^{j} u^{\hom}(t,x)
    = f(t,x)\text{,}
  \end{equation*}
  for each $\nu>\nu_0$ and $f\in\Lp[\nu]{2}(\R;\mathcal{H})$.
  Moreover by~\cite{Wa16a}, the homogenised solution explicitly reads
  \begin{equation*}
    u^{\hom}(t,x)=\int_{-\infty}^t I_0(t-s)f(s,x) \dx[s]\text{,}
  \end{equation*}
  where $I_0$ is the modified Bessel function of first kind. Comparing this to~\labelcref{eq:explicitSolutionSeqODEexample} and consulting \Cref{thm:main-nonlocal-homogenisation-result}, we conclude
  that $u_n$ converges to $u^{\hom}$ weakly but (due to the oscillation) in general not strongly for each $\nu>\nu_0$ and $f\in\Lp[\nu]{2}(\R;\mathcal{H})$.
\end{example}

\begin{remark}
  For the treatment of homogenisation problems for ODEs we refer to the classical \cite{Ta89}. In this article nonlocal effects have been noticed after a homogenisation process of a sequence of non-periodic ODEs. We refer to \cite{Wa14, Wa12b} for a thorough positioning of the present operator-theoretic approach to the classical viewpoint by Tartar and related works. We particulary refer to \cite[Rem.\ 3.8]{Wa14} for the relation to the concept of Young-measures.
\end{remark}

\begin{example}[Finite-dimensional kernel]\label{ex:1d-pde-with-compactness-limit}
  For \Cref{ex:1d-pde-with-compactness}, we will manually calculate the limit of the material laws that \Cref{thm:main-nonlocal-homogenisation-result} asks for.
  The orthogonal projection from $\Lp{2}(0,1)$ onto the closed subspace of constant functions is given by
  the integral mean. Recalling the definition of the nonlocal $\Htopo$-topology and considering $(\epsilon_n)_{n\in\mathbb{N}}$ first, we need to find the limits of
  $(\int_{0}^1 \epsilon_n(x) \dx[x])^{-1}$, $\int_{0}^1 \varphi(x)\epsilon_n(x) \dx[x]$, and $\int_{0}^1 \varphi(x)\epsilon_n(x)\psi(x) \dx[x]$ for all $\varphi,\psi\in \Lp{2}(0,1)$ with integral mean $0$.
  By \Cref{thm:WeakStarLimitPerMultOp} and since $\int_{0}^1 \epsilon_1(x)\dx[x]=1/2$, the respective limits are $2$, $0$, and $\int_{0}^1 \varphi(x)(1/2)\psi(x) \dx[x]$ respectively. With that, one easily proves $\epsilon_n\to 1/2$
  in the nonlocal $\Htopo$-topology. Arguing similarly for $(\sigma_n)_{n\in\N}$, we obtain the homogenised material law $M(z)\coloneqq\begin{psmallmatrix}
        1/2 & 0\\
        0 & 1
      \end{psmallmatrix}
      +
      z^{-1}
      \begin{psmallmatrix}
        1/2 & 0\\
        0 & 0
      \end{psmallmatrix}\in \mathcal{M}(\ker(A),\ran(A))$ for $z\in\C_{\Re>\nu_{0}}$.

       Therefore, \Cref{thm:main-nonlocal-homogenisation-result} yields the well-posed homogenised problem
  \begin{equation*}
    \left[
      \partial_{t,\nu}
      \begin{pmatrix}
        \frac{1}{2} & 0\\
        0 & 1
      \end{pmatrix}
      +
      \begin{pmatrix}
        \frac{1}{2} & 0\\
        0 & 0
      \end{pmatrix}
      +
      \begin{pmatrix}
        0 & \partial_\# \\
        \partial_\# & 0
      \end{pmatrix}
    \right] U^{\hom}
    =F,
  \end{equation*}
  and the solutions $U_n$ converge strongly to $U^{\hom}$ for each $\nu>\nu_0$ and $F\in\Lp[\nu]{2}(\R;\mathcal{H})$.
\end{example}

\begin{example}[Infinite-dimensional kernel and range]\label{ex:pde-without-compactness-limits}
  Let us look at the PDE-sequences in \Cref{ex:pde-without-compactness} and compute their limits.
  \begin{enumerate}
    \item\label{item:1D-glueing-example-limit}
          \Cref{ex:pde-without-compactness}~\ref{item:1D-glueing-example} is in some sense a concatenation of two problems.
          We have already discussed that the decomposition of $\mathcal{H}$ into the range $\mathcal{H}_1$ and kernel $\mathcal{H}_0$ of $A$
          is given by $\Lp{2}(-1,1)^{2} = \Lp{2}(-1,0)^{2} \oplus \Lp{2}(0,1)^{2}$.
          Hence, we can decompose $U$, $F$, $A$, etc.\ accordingly and obtain the following representation of the PDE for $n\in\mathbb{N}$, $\nu>\nu_0$ and $F\in\Lp[\nu]{2}(\R;\mathcal{H})$
          \begin{align*}
            \MoveEqLeft[8]
            \left[\partial_{t,\nu}
              \begin{pmatrix}
                \begin{pmatrix}1 & 0\\0 &1 \end{pmatrix} & 0\\ 0&\begin{pmatrix}1 & 0\\0 &1 \end{pmatrix}
              \end{pmatrix}
              + \sin(2\uppi n \argdot)
              \begin{pmatrix}
                \begin{pmatrix}1 & 0\\ 0 &1 \end{pmatrix} & 0\\ 0 &\begin{pmatrix}1 & 0 \\ 0 &1 \end{pmatrix}
              \end{pmatrix}
            \right.\\
            &\left. +
              \begin{pmatrix}
                \begin{pmatrix} 0 & \mathring{\partial}_{x,\{0\}}\\ \mathring{\partial}_{x,\{-1\}} & 0\end{pmatrix} & 0 \\
                0 &\begin{pmatrix}0 & 0 \\ 0 & 0 \end{pmatrix}
              \end{pmatrix}
            \right]
            \begin{pmatrix}
              U_{n,1} \\ U_{n,2}
            \end{pmatrix}
            =
            \begin{pmatrix}
              F_{1} \\ F_{2}
            \end{pmatrix}.
          \end{align*}
          We now manually calculate the homogenised limit of the material laws.

          Because of the diagonal shape of the material law the nonlocal $\Htopo$-convergence is decoupled. To be precise, we only have to deal with $a_{n,00}^{-1}$ and $a_{n,11}$ (Note that we have swapped the order of $\ran A$ and $\ker A$ in our decomposition, hence $a_{n,00}$ is in the lower right corner and $a_{n,11}$ in the upper left).

          The limiting process on $\mathcal{H}_1$ is a direct consequence of \Cref{thm:WeakStarLimitPerMultOp} implying $a_{n,11} = 1 + \frac{1}{z} \sin(2\uppi n \argdot) \to 1$
          in the weak operator topology for every $z \in \C_{\Re > \nu_0}$.
          Thus, on $(-1,0)$, we obtain the well-posed homogenised problem
          \begin{equation*}
            \left[\partial_{t,\nu} \begin{pmatrix}1 & 0 \\ 0 &1 \end{pmatrix}
              + \begin{pmatrix}0 & \mathring{\partial}_{x,\{0\}}\\ \mathring{\partial}_{x,\{-1\}} & 0 \end{pmatrix}\right]
            U_{1}^{\hom}
            = F_{1},
          \end{equation*}
          for each $\nu>\nu_0$ and $F_1\in\Lp[\nu]{2}(\R;\mathcal{H}_1)$.

          On $\mathcal{H}_0$, we are in the setting of \Cref{ex:ode}.
          Therefore, on $(0,1)$, we obtain from \Cref{ex:ode-limit} (omitting the system) the homogenised solution
          \begin{equation*}
            U_{2}^{\hom}(t,x) = \int_{0}^{t} I_{0}(t-s) F_{2}(s,x) \dx[s],
          \end{equation*}
          for each $\nu>\nu_0$ and $F_2\in\Lp[\nu]{2}(\R;\mathcal{H}_0)$.

          To sum up, the homogenised material law $M(z)\in\mathcal{M}(\mathcal{H}_0,\mathcal{H}_1)$  for $z\in\C_{\Re>\nu_{0}}$ is given by
          $\begin{psmallmatrix}M_{11}&0\\0&M_{00}(z)\end{psmallmatrix}$,
          where $M_{11}(z)\coloneqq \begin{psmallmatrix}1&0\\0&1\end{psmallmatrix}$ and $M_{00}(z)$ is~\labelcref{eq:limitMatLawODEDoubleSeries} times $\begin{psmallmatrix}1&0\\0&1\end{psmallmatrix}$.
           \Cref{thm:main-nonlocal-homogenisation-result} yields well-posedness of the corresponding homogenised problem
           \begin{equation*}
  \left[\partial_{t,\nu}\begin{pmatrix}M_{11}&0\\0&M_{00}(\partial_{t,\nu})\end{pmatrix} + A\right] \begin{pmatrix}
              U_{1}^{\hom} \\ U_{2}^{\hom}
            \end{pmatrix} = F\text{,}
\end{equation*}
           strong convergence of $U_{n,1}$ to $U_{1}^{\hom}$ and weak convergence of $U_{n,2}$ to $U_{2}^{\hom}$ for each $\nu>\nu_{0}$ and $F\in\Lp[\nu]{2}(\R;\mathcal{H})$. Recall from \Cref{ex:ode-limit}
           that in general $U_{2}^{\hom}$ is indeed only reached in a week sense.

    \item\label{item:ex3-limit}
          In \Cref{ex:pde-without-compactness}~\ref{item:ex3} the block operators were given with respect to the decomposition $\Lp{2}(\Omega) \oplus \Lp{2}(\Omega)^{2}$. On $\Omega\setminus \overline{\Omega_1}$,
          the material law is constantly $\begin{psmallmatrix}
                \epsilon_0 & 0\\
                0 & \mu_0
              \end{psmallmatrix}$.

           The $\ker A \oplus \ran A$ decomposition~\labelcref{eq:kerranAdecomp2Dfirstex} lets us compute the nonlocal $\Htopo$-limit in the first row manually.
          With \Cref{thm:WeakStarLimitPerMultOp}, we infer
          \begin{equation*}
            1 - \indicator_{O_{n}} + \frac{1}{z} \indicator_{O_{n}} \to \frac{1}{2} + \frac{1}{z} \frac{1}{2}
          \end{equation*}
          for $z\in\C_{\Re>\nu_{0}}$ in $\Lb(\Lp{2}(\Omega_1))$ in the weak operator topology. That implies
          \begin{equation*}
            \indicator_{\Omega_{1}}\left(1 - \indicator_{O_{n}} + \frac{1}{z} \indicator_{O_{n}}\right)+(1-\indicator_{\Omega_{1}})\epsilon_0 \to \indicator_{\Omega_{1}}\left(\frac{1}{2} + \frac{1}{z} \frac{1}{2}\right)
            +(1-\indicator_{\Omega_{1}})\epsilon_0
          \end{equation*}
          for $z\in\C_{\Re>\nu_{0}}$ in $\Lb(\Lp{2}(\Omega))$ in the weak operator topology.

          The second row of~\labelcref{eq:kerranAdecomp2Dfirstex}  asks for the $\tau(\ran(J\grad), \ran(\cgrad))$-limit of
          \begin{equation*}
          \indicator_{\Omega_{1}}\begin{pmatrix} 1 + \indicator_{O_{n}} & 0 \\ 0 & 1 + \indicator_{O_{n}}\end{pmatrix}+(1-\indicator_{\Omega_{1}})\mu_0\text{.}
          \end{equation*}
           We compute the
          $\tau(\ran(\cgrad),\ran(J\grad))$-limit
          \begin{multline*}
            \indicator_{\Omega_{1}}\begin{pmatrix} (1 + \indicator_{O_{n}})^{-1} & 0 \\ 0 & (1 + \indicator_{O_{n}})^{-1}\end{pmatrix}+(1-\indicator_{\Omega_{1}})\mu_0^{-1}
             \\ %
            \to \indicator_{\Omega_{1}}\begin{pmatrix} \frac{2}{3} & 0 \\ 0 & \frac{3}{4} \end{pmatrix}+(1-\indicator_{\Omega_{1}})\mu_0^{-1},
          \end{multline*}
          and via inverting the $\tau(\ran(J\grad), \ran(\cgrad))$-limit
          \begin{equation*}
            \indicator_{\Omega_{1}}\begin{pmatrix} 1 + \indicator_{O_{n}} & 0 \\ 0 & 1 + \indicator_{O_{n}}\end{pmatrix}+(1-\indicator_{\Omega_{1}})\mu_0
            \to\indicator_{\Omega_{1}}\begin{pmatrix} \frac{3}{2} & 0 \\ 0 & \frac{4}{3} \end{pmatrix}+(1-\indicator_{\Omega_{1}})\mu_0,
          \end{equation*}
          using \Cref{thm:nonlocal-H-convergence-symm-wrt-inv}, \Cref{thm:H-converges-equivalent-to-Schur-convergence-2D}, \Cref{thm:localsums} and \Cref{thm:ExplicitLocalHLimitForStratMedia}.

          All in all, the homogenised material law $M(z)\in\mathcal{M}(\mathcal{H}_0,\mathcal{H}_1)$ for $z\in\C_{\Re>\nu_{0}}$ is given by
          $M(z)\coloneqq M_{0} + z^{-1}M_{1}$, where
          \begin{equation*}
            M_0 \coloneqq
            \indicator_{\Omega_{1}}
            \begin{pmatrix}
              \frac{1}{2} & 0\\
              0 & \begin{pmatrix}
                \frac{3}{2} & 0\\
                0 & \frac{4}{3}
              \end{pmatrix}
            \end{pmatrix}
            +
            (1-\indicator_{\Omega_{1}})
            \begin{pmatrix}
              \epsilon_0 & 0\\
              0 & \mu_0
            \end{pmatrix},\quad
            M_1 \coloneqq \indicator_{\Omega_{1}}
            \begin{pmatrix}
              \frac{1}{2} & 0\\
              0 & 0
            \end{pmatrix}.
          \end{equation*}
          \Cref{thm:main-nonlocal-homogenisation-result} yields well-posedness of the homogenised problem
          \begin{equation*}
  \bigl(\partial_{t,\nu}M_{0} + M_{1} + A\bigr) U^{\hom} = F
\end{equation*}
           with weak convergence of the $\Lp[\nu]{2}(\R;\ker(A))$-component of the solution sequence and
          strong convergence of  the $\Lp[\nu]{2}(\R;\ran(A))$-component of the solution sequence for each $\nu>\nu_{0}$ and $F\in\Lp[\nu]{2}(\R;\mathcal{H})$.

    \item\label{item:ex4-limit} For \Cref{ex:pde-without-compactness}~\ref{item:ex4}, we exactly repeat the ideas from~\labelcref{item:ex3-limit}.

    The first row of~\labelcref{eq:kerranAdecomp2Dfirstex} results in computing
    \begin{equation*}
            \indicator_{\Omega_{1}}(1 + \indicator_{O_{n}} )+(1-\indicator_{\Omega_{1}})\epsilon_0 \to \indicator_{\Omega_{1}}\frac{3}{2}
            +(1-\indicator_{\Omega_{1}})\epsilon_0
          \end{equation*}
           in $\Lb(\Lp{2}(\Omega))$ in the weak operator topology.

          For the second row of~\labelcref{eq:kerranAdecomp2Dfirstex}, we have
          \begin{multline*}
            \indicator_{\Omega_{1}}\bigl(1 - (1 - \tfrac{1}{z} )\indicator_{O_{n}}\bigr)^{-1}  \begin{pmatrix}1 & 0 \\ 0 & 1\end{pmatrix} + (1-\indicator_{\Omega_{1}})\mu_0^{-1}
             \\ %
            \to
            \indicator_{\Omega_{1}}
            \begin{pmatrix}
              \frac{2}{1 + \frac{1}{z}} & 0 \\
              0 & \frac{1}{2} (1 + z)
            \end{pmatrix} + (1-\indicator_{\Omega_{1}})\mu_0^{-1}
          \end{multline*}
          in $\tau(\ran(\cgrad),\ran(J\grad))$ for $z\in\C_{\Re>\nu_{0}}$.
          Hence, the $\tau(\ran(J\grad),\ran(\cgrad))$-limit reads
          \begin{equation*}
            \indicator_{\Omega_{1}}\begin{pmatrix}
              \frac{1}{2} + \frac{1}{z}\frac{1}{2} & 0 \\
              0 & 2 (1 + z)^{-1}
            \end{pmatrix}+(1-\indicator_{\Omega_{1}})\mu_0
          \end{equation*}
          for $z\in\C_{\Re>\nu_{0}}$.
          If we substitute
          \begin{equation*}
            2(1 + z)^{-1} = \frac{1}{z} (z + 1 - 1) 2 (1 + z)^{-1}
            = \frac{1}{z} (2 - 2 (1 + z)^{-1}),
          \end{equation*}
          the homogenised material law $M(z)\in\mathcal{M}(\mathcal{H}_0,\mathcal{H}_1), z\in\C_{\Re>\nu_{0}},$ is given by $M(z)\coloneqq M_{0} + z^{-1}M_{1}$, where
          \begin{align}
            M_{0}
            &\coloneqq\indicator_{\Omega_{1}}
              \begin{pmatrix}
                \frac{3}{2} & 0\\
                0 & \begin{pmatrix}
                      \frac{1}{2} & 0\\
                      0 & 0
                    \end{pmatrix}
              \end{pmatrix}
              + (1-\indicator_{\Omega_{1}})
              \begin{pmatrix}
                \epsilon_0 & 0\\
                0 & \mu_0
              \end{pmatrix},
            \notag\\
            M_1
            &\coloneqq \indicator_{\Omega_{1}}
              \begin{pmatrix}
                0 & 0\\
                0 & \begin{pmatrix}
                      \frac{1}{2}& 0\\
                      0 & 2-2(1+z)^{-1}
                    \end{pmatrix}
              \end{pmatrix}.
              \label{eq:memoryterminhomlimit1}
          \end{align}
          Note that by the limit process, we obtained an operator with the memory term $(1+z)^{-1}\approx (1+\partial_{t,\nu})^{-1}$.
    \item\label{item:ex5-limit} For \Cref{ex:pde-without-compactness}~\ref{item:ex5}, we have to consider the decomposition~\labelcref{eq:kerranAdecomp3DMaxwellex}.

          For the first row, we need to find the $\tau(\ran(\cgrad),\ran(\curl))$-limit of
            \begin{equation*}
            \indicator_{\Omega_{1}}
                    \begin{psmallmatrix}
                      (1 - \indicator_{O_{n}}) \epsilon + \tfrac{1}{z} \indicator_{O_{n}}\sigma & 0 & 0 \\
                      0 & (1 - \indicator_{O_{n}}) \epsilon + \tfrac{1}{z} \indicator_{O_{n}}\sigma & 0 \\
                      0 & 0 & (1 - \indicator_{O_{n}}) \epsilon + \tfrac{1}{z} \indicator_{O_{n}}\sigma
                    \end{psmallmatrix}+(1-\indicator_{\Omega_{1}})\epsilon_0
                  \end{equation*}
                  for $z\in\C_{\Re>\nu_{0}}$.
                  Applying \Cref{thm:H-converges-equivalent-to-Schur-convergence}, \Cref{thm:localsums} and \Cref{thm:ExplicitLocalHLimitForStratMedia}, we obtain the limit
                  \begin{equation*}
                  \indicator_{\Omega_{1}}
                    \begin{pmatrix}
                      \frac{1}{z} 2(\epsilon^{-1} \frac{1}{z} + \sigma^{-1})^{-1} & 0 & 0 \\
                      0 & \frac{1}{2}\epsilon + \frac{1}{z} \frac{1}{2}\sigma & 0\\
                      0 & 0 & \frac{1}{2}\epsilon + \frac{1}{z} \frac{1}{2}\sigma
                    \end{pmatrix}+(1-\indicator_{\Omega_{1}})\epsilon_0
                  \end{equation*}
                  for $z\in\C_{\Re>\nu_{0}}$.

                  For the second row of~\labelcref{eq:kerranAdecomp3DMaxwellex}, we need to find the $\tau(\ran(\grad),\ran(\ccurl))$-limit of
                  \begin{equation*}
                    \indicator_{\Omega_{1}}
                    \begin{pmatrix}
                      (1 + \indicator_{O_{n}}) \mu & 0 & 0 \\
                      0 & (1 + \indicator_{O_{n}}) \mu & 0 \\
                      0 & 0 & (1 + \indicator_{O_{n}}) \mu
                    \end{pmatrix}+(1-\indicator_{\Omega_{1}})\mu_0\text{.}
                  \end{equation*}
                  \Cref{thm:H-converges-equivalent-to-Schur-convergence-diff-BD}, \Cref{thm:localsums} and \Cref{thm:ExplicitLocalHLimitForStratMedia} yield the limit
                  \begin{equation*}
                    \indicator_{\Omega_{1}}
                    \begin{pmatrix}
                      \frac{4}{3}\mu & 0 & 0 \\
                      0 & \frac{3}{2}\mu & 0 \\
                      0 & 0 &\frac{3}{2}\mu
                    \end{pmatrix}+(1-\indicator_{\Omega_{1}})\mu_0\text{.}
                  \end{equation*}

                  Thus, if we substitute
                  \begin{align*}
                    \frac{1}{z} 2\Bigl(\epsilon^{-1} \frac{1}{z} + \sigma^{-1}\Bigr)^{-1}
                    &= \frac{1}{z} 2 \epsilon z (\sigma + z \epsilon)^{-1} \sigma
                      = \frac{1}{z} 2 (\epsilon z + \sigma - \sigma) (\sigma + z \epsilon)^{-1} \sigma \\
                    &= \frac{1}{z} 2 \bigl(1 - \sigma (\sigma + z \epsilon)^{-1}\bigr) \sigma\text{,}
                  \end{align*}
          the homogenised material law $M(z)\in\mathcal{M}(\mathcal{H}_0,\mathcal{H}_1)$ is given by
          $M(z)\coloneqq M_{0} + z^{-1}M_{1}$, where
          \begin{align*}
            M_{0}
            &\coloneqq \indicator_{\Omega_{1}}
              \begin{pNiceArray}{ccw{c}{2em}ccc}[left-margin=0.7em, right-margin=0.7em]
                0 & 0 & 0 &  & &  \\
                0 & \frac{1}{2}\epsilon & 0 &  & 0 &  \\
                0 & 0 & \frac{1}{2}\epsilon &  &  &  \\
                &  &  & \frac{4}{3}\mu & 0 & 0 \\
                 & 0 &  & 0 & \frac{3}{2}\mu & 0 \\
                 &  &  & 0 & 0 & \frac{3}{2}\mu
                 \CodeAfter
                 \SubMatrix({1-1}{3-3})
                 \SubMatrix({4-4}{6-6})
              \end{pNiceArray}
              + (1-\indicator_{\Omega_{1}})
              \begin{pmatrix}
                \epsilon_0 & 0\\
                0 & \mu_0
              \end{pmatrix},
            \\
            M_{1}
            &\coloneqq \indicator_{\Omega_{1}}
              \begin{pmatrix}
                \begin{pmatrix}
                  2\sigma(1-\sigma(\sigma+z\epsilon)^{-1})&0&0\\
                  0&\frac{\sigma}{2}&0\\
                  0&0&\frac{\sigma}{2}
                \end{pmatrix}&0\\
                0&0
              \end{pmatrix}
          \end{align*}
          As in~\labelcref{item:ex4-limit}, this material law has a memory term. This time, it is $(\sigma+z\epsilon)^{-1}\approx (\sigma+\partial_{t,\nu}\epsilon)^{-1}$.
          \Cref{thm:main-nonlocal-homogenisation-result} yields well-posedness of the homogenised problem
                    \begin{equation*}
  \bigl(\partial_{t,\nu}M_{0} + M_{1} + A\bigr) U^{\hom} = F
\end{equation*}
          with weak convergence of the $\Lp[\nu]{2}(\R;\ker(A))$-component of the solution sequence and
          strong convergence of  the $\Lp[\nu]{2}(\R;\ran(A))$-component of the solution sequence for each $\nu>\nu_{0}$ and $F\in\Lp[\nu]{2}(\R;\mathcal{H})$.\myqedhere
  \end{enumerate}
\end{example}

\begin{remark}
  We refer to \cite[Sec.\ 7]{Wa18} for a more in depth discussion of homogenisation problems for Maxwell's equations in relation to the operator-theoretic perspective provided here; see, in particular \cite{We01} or \cite{Su08} for other approaches.
\end{remark}

\section{Numerical Simulations}\label{sec:numerics}

This section is devoted to complement a numerical study for our theoretical findings from the previous sections. The computations presented here will support our results concerning strong and weak convergence. The simulations are carried out in  $\mathbb{SOFE}$, a finite-element framework in Matlab and Octave,
see \texttt{github.com/SOFE-Developers/SOFE}.

For the temporal dimension in the examples to come, we use a discontinuous Galerkin method in time, whereas a continuous Galerkin method is applied for the spatial directions. The details of this method tailored for evolutionary equations are provided
in \cite{FrTrWa19}; we briefly summarise the essential parts next. For a fixed $T>0$ and the corresponding time interval $[0,T]$, consider a partition $0 = t_{0} < \dots < t_{\mbound} = T$ for some $\mbound \in\N$ and set
$I_{m}\coloneqq \lparen t_{m-1},t_{m} \rbrack$ for $1 \leq m \leq \mbound$.
For $\mathcal{U}$, a piecewise polynomial discontinuous space
of degree $1$ in time corresponding to the partition of $[0,T]$ and an $A$-conforming piecewise polynomial space of degree $1$ in space corresponding to the spatial Hilbert space $\mathcal{H}$, the method is now given by:
Find $U\in \mathcal{U}$ such that for all $m \in\{1,\dots,\mbound\}$ and $\Phi\in \mathcal{U}$
\begin{equation}
  \label{eq:numerical-formulation}
  \begin{multlined}[0.8\textwidth]
    \Qmr{(\partial_t M_0 + M_1 + A)U,\Phi}
    + \scprod{M_0 \jump{U}_{m-1}}{{\Phi}^{+}_{m-1}}_{\mathcal{H}} \\
    = \Qmr{F,\Phi} + \scprod{M_0U_0}{\Phi(0+)}_{\mathcal{H}}.
  \end{multlined}
\end{equation}
Here, $\Qmr{a,b}\coloneqq\Qm{\scprod{a}{b}_{\mathcal{H}}}$ is the weighted right-sided Gau\ss{}--Radau quadrature rule with the property
\begin{equation*}
  \Qm{p} = \int_{I_m}p(t)\euler^{-2\rho(t-t_{m-1})}\dx[t] \quad \text{for all}\quad p \in \mathcal{P}_{2}(I_m),\footnotemark
\end{equation*}%
\footnotetext{$\mathcal{P}_{2}(I_m)$ denotes the set of all polynomials on $I_{m}$ of degree smaller than or equal to $2$.}%
i.e., it approximates the exponentially weighted scalar product in space-time.
Furthermore, we denote by
\begin{equation*}
  \jump{U}_{m-1}\coloneqq
  \begin{cases}
    U(t_{m-1}+) - U(t_{m-1}-),& m\in\{2,\ldots,\mbound\}\\
    U(t_0+),& m=1,
  \end{cases}
\end{equation*}
the jump of $U$ at $t_{m-1}$ and set $\Phi^+_{m-1}\coloneqq \Phi(t_{m-1}+)$.

We replace $M_{0}$, $M_{1}$ and $U$ in \eqref{eq:numerical-formulation} by $M_{0,n}$, $M_{1,n}$ and $U_{n}$, respectively, for the non-limit case.

For convenience, we restricted our attention to finite intervals as time-horizon instead of $\R$ that was used to model the temporal scale in our theoretical sections.

\subsection{Continuation of \Cref{ex:ode}}
We consider the sequence of equations ($n\in\N$)
\begin{equation*}
  \partial_t u_n(t,x)+\sin\left(2\uppi n x\right)u_n(t,x)=f(t,x)
\end{equation*}
 in the domain $[0,2]\times[0,1]$.
Asking for initial zero conditions, we find
the solution of the homogenised problem
\begin{equation*}
  u^{\hom}(t,x)=\int_{0}^{t} I_{0}(t-s)f(s,x) \dx[s],
\end{equation*}
where $I_0$ is the modified Bessel function of first kind (see \Cref{ex:ode-limit} and \cite{Wa16a}). For our numerical
experiment we choose $f(t,x)=1$ in $[0,2]\times[0,1]$. \Cref{fig:ex1}
\begin{figure}[htb]
  \centering
  \begin{subfigure}[c]{0.66\textwidth}
    \includegraphics[width=0.49\textwidth]{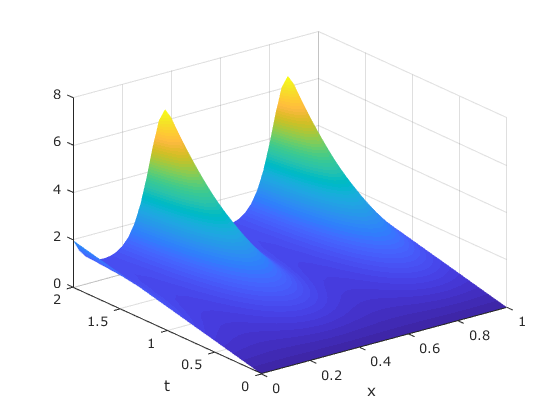}
    \includegraphics[width=0.49\textwidth]{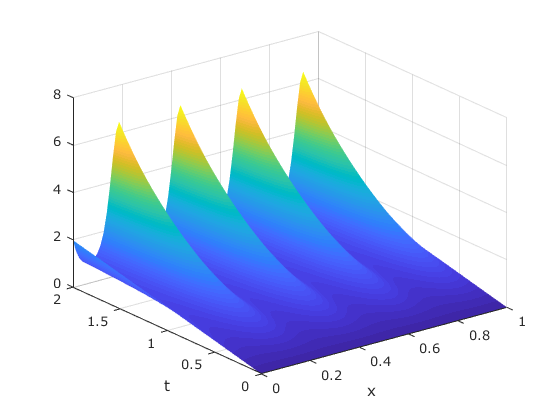}
    \includegraphics[width=0.49\textwidth]{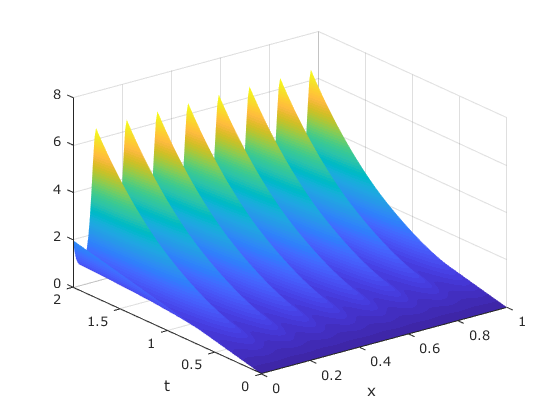}
    \includegraphics[width=0.49\textwidth]{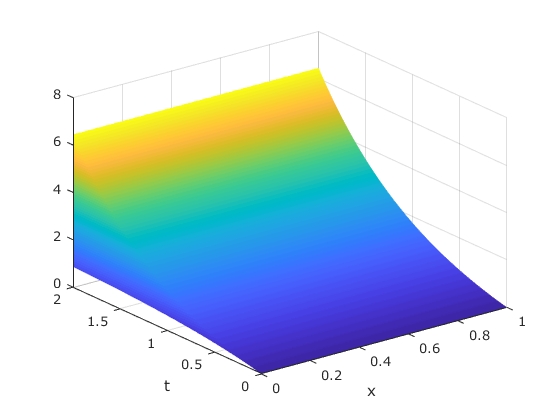}
    \caption{Solutions of \Cref{ex:ode} for $n\in\{2,4,8,1024\}$}
  \end{subfigure}
  \hfill
  \begin{subfigure}[c]{0.33\textwidth}
    \includegraphics[width=1\textwidth]{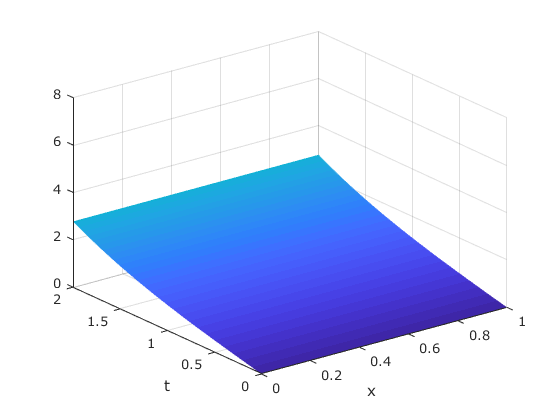}
    \caption{Solution of the homgenisation limit of \Cref{ex:ode}}
  \end{subfigure}
  \caption{\label{fig:ex1}Solutions of \Cref{ex:ode} for $n\in\{2,4,8,1024\}$ and for the homogenised problem}
\end{figure}
%
%
shows the solutions of \Cref{ex:ode} for different values of $n$. We observe that doubling $n$ doubles the number of waves
in the solution. Thus, we have a shrinking effect of a periodic function in $x$-direction. In contrast to that, the last
plot shows $u^{\hom}$, the solution of the homogenised problem.

This visualises weak but the lack of strong convergence.
Unfortunately, we cannot compute the following quantity for all $v \in \Lp{2}(\Omega)$
\begin{equation*}
  \abs{\scprod{u_{n} - u^{\hom}}{v}_{\Lp{2}}}.
\end{equation*}
Thus we use the following sample of example functions
\begin{equation*}
  v\in
  \set[\big]{%
  (t,x)\mapsto 1,%
  (t,x)\mapsto x,%
  (t,x)\mapsto x^2,%
  (t,x)\mapsto \sin(\uppi x),%
  (t,x)\mapsto t%
  }.
\end{equation*}
The first and last function are constant in space, while all but the last function are constant in time.

\Cref{fig:ex1_weak}
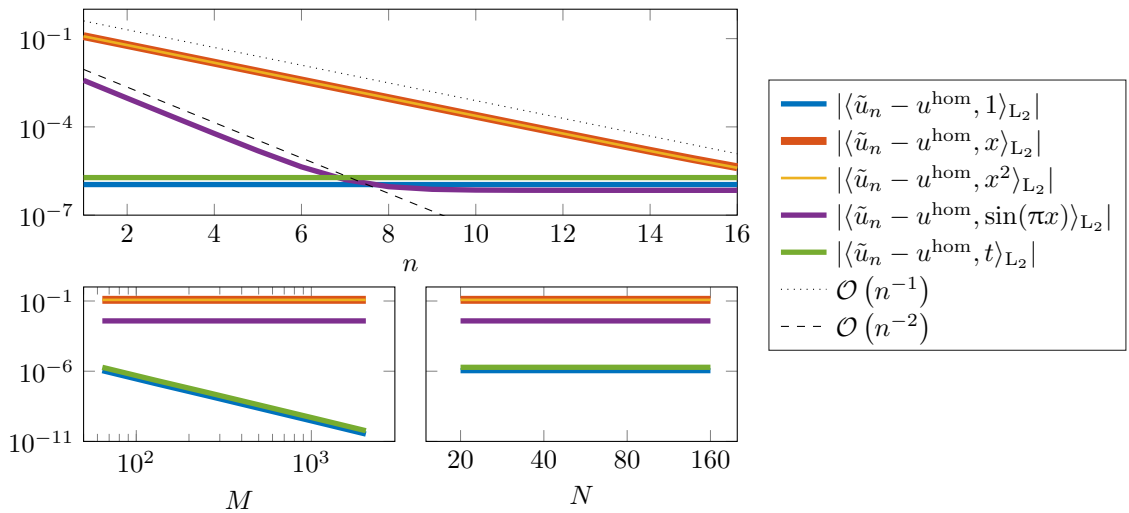
\begin{figure}[htb]
%
%
\definecolor{mycolor1}{rgb}{0.00000,0.44700,0.74100}%
\definecolor{mycolor2}{rgb}{0.85000,0.32500,0.09800}%
\definecolor{mycolor3}{rgb}{0.92900,0.69400,0.12500}%
\definecolor{mycolor4}{rgb}{0.49400,0.18400,0.55600}%
\definecolor{mycolor5}{rgb}{0.46600,0.67400,0.18800}%
\begin{tikzpicture}

\begin{axis}[%
width=0.63\textwidth,
height=0.2\textwidth,
scale only axis,
xmin=1,
xmax=16,
xlabel = $n$,
ymode=log,
ymin=1e-07,
ymax=1,
yminorticks=true,
axis background/.style={fill=white},
legend style={at={(0.66\textwidth,0)}, anchor=west, legend cell align=left, align=left, draw=white!15!black}
]
\addplot [color=mycolor1, line width=2.0pt]
  table[row sep=crcr]{%
 1	1.0963e-06\\
 2	1.0963e-06\\
 3	1.0963e-06\\
 4	1.0963e-06\\
 5	1.0963e-06\\
 6	1.0963e-06\\
 7	1.0963e-06\\
 8	1.0963e-06\\
 9	1.0963e-06\\
10	1.0963e-06\\
11	1.0963e-06\\
12	1.0963e-06\\
13	1.0963e-06\\
14	1.0963e-06\\
15	1.0963e-06\\
16	1.0963e-06\\
};
\addlegendentry{$\abs{\scprod{\tilde{u}_n-u^{\hom}}{1}_{\Lp{2}}}$}
\addplot [color=mycolor2, line width=3.0pt]
  table[row sep=crcr]{%
 1	1.2138e-01\\
 2	6.0692e-02\\
 3	3.0346e-02\\
 4	1.5173e-02\\
 5	7.5870e-03\\
 6	3.7938e-03\\
 7	1.8972e-03\\
 8	9.4885e-04\\
 9	4.7470e-04\\
10	2.3762e-04\\
11	1.1909e-04\\
12	5.9817e-05\\
13	3.0183e-05\\
14	1.5365e-05\\
15	7.9568e-06\\
16	4.2525e-06\\
};
\addlegendentry{$\abs{\scprod{\tilde{u}_n-u^{\hom}}{x}_{\Lp{2}}}$}
\addplot [color=mycolor3, line width=1.0pt]
  table[row sep=crcr]{%
 1	1.2018e-01\\
 2	6.0392e-02\\
 3	3.0271e-02\\
 4	1.5154e-02\\
 5	7.5821e-03\\
 6	3.7924e-03\\
 7	1.8967e-03\\
 8	9.4859e-04\\
 9	4.7450e-04\\
10	2.3744e-04\\
11	1.1890e-04\\
12	5.9634e-05\\
13	3.0000e-05\\
14	1.5183e-05\\
15	7.7740e-06\\
16	4.0697e-06\\
};
\addlegendentry{$\abs{\scprod{\tilde{u}_n-u^{\hom}}{x^2}_{\Lp{2}}}$}
\addplot [color=mycolor4, line width=2.0pt]
  table[row sep=crcr]{%
 1	3.8247e-03\\
 2	9.4544e-04\\
 3	2.3619e-04\\
 4	5.9528e-05\\
 5	1.5403e-05\\
 6	4.3740e-06\\
 7	1.6169e-06\\
 8	9.2767e-07\\
 9	7.5536e-07\\
10	7.1228e-07\\
11	7.0152e-07\\
12	6.9882e-07\\
13	6.9815e-07\\
14	6.9798e-07\\
15	6.9794e-07\\
16	6.9793e-07\\
};
\addlegendentry{$\abs{\scprod{\tilde{u}_n-u^{\hom}}{\sin(\uppi x)}_{\Lp{2}}}$}
\addplot [color=mycolor5, line width=2.0pt]
  table[row sep=crcr]{%
 1	1.8920e-06\\
 2	1.8920e-06\\
 3	1.8920e-06\\
 4	1.8920e-06\\
 5	1.8920e-06\\
 6	1.8920e-06\\
 7	1.8920e-06\\
 8	1.8920e-06\\
 9	1.8920e-06\\
10	1.8920e-06\\
11	1.8920e-06\\
12	1.8920e-06\\
13	1.8920e-06\\
14	1.8920e-06\\
15	1.8920e-06\\
16	1.8920e-06\\
};
\addlegendentry{$\abs{\scprod{\tilde{u}_n-u^{\hom}}{t}_{\Lp{2}}}$}
\addplot [color=black, dotted]
  table[row sep=crcr]{%
 1	4e-01\\
16	1.2207e-05\\
};
\addlegendentry{$\ord{n^{-1}}$}
\addplot [color=black, dashed]
  table[row sep=crcr]{%
 1	9e-03\\
16	8.3819e-12\\
};
\addlegendentry{$\ord{n^{-2}}$}
\end{axis}
%
\begin{axis}[%
width=0.3\textwidth,
height=0.15\textwidth,
at={(0,-3.0cm)},
scale only axis,
xmin=50,
xmax=3000,
xmode=log,
xlabel = $\mbound$,
ymode=log,
ymin=1e-11,
ymax=1,
yminorticks=true,
axis background/.style={fill=white}
]
\addplot [color=mycolor1, line width=2.0pt]
  table[row sep=crcr]{%
  64	1.0963e-06\\
 128	1.3390e-07\\
 256	1.7313e-08\\
 512	2.1665e-09\\
1024	2.7140e-10\\
2048	3.3330e-11\\
};
\addplot [color=mycolor2, line width=3.0pt]
  table[row sep=crcr]{%
  64	1.2138e-01\\
 128	1.2138e-01\\
 256	1.2138e-01\\
 512	1.2138e-01\\
1024	1.2138e-01\\
2048	1.2138e-01\\
};
\addplot [color=mycolor3, line width=1.0pt]
  table[row sep=crcr]{%
  64	1.2018e-01\\
 128	1.2018e-01\\
 256	1.2018e-01\\
 512	1.2018e-01\\
1024	1.2018e-01\\
2048	1.2018e-01\\
};
\addplot [color=mycolor4, line width=2.0pt]
  table[row sep=crcr]{%
  64	3.8247e-03\\
 128	3.8241e-03\\
 256	3.8186e-03\\
 512	3.8186e-03\\
1024	3.8186e-03\\
2048	3.8186e-03\\
};
\addplot [color=mycolor5, line width=2.0pt]
  table[row sep=crcr]{%
  64	1.8920e-06\\
 128	2.3095e-07\\
 256	2.9946e-08\\
 512	3.7486e-09\\
1024	4.6959e-10\\
2048	5.7799e-11\\
};
\end{axis}
%
\begin{axis}[%
width=0.3\textwidth,
height=0.15\textwidth,
at={(0.33\textwidth,-3.0cm)},
scale only axis,
xmin=15,
xmax=200,
xmode=log,
xlabel = $N$,xtick={20,40,80,160},xticklabels={20,40,80,160},
ymode=log,
ymin=1e-11,
ymax=1,
yminorticks=true,yticklabels={},
axis background/.style={fill=white}
]
\addplot [color=mycolor1, line width=2.0pt]
  table[row sep=crcr]{%
  20	1.0963e-06\\
  40	1.1006e-06\\
  80	1.1006e-06\\
 160	1.1006e-06\\
};
\addplot [color=mycolor2, line width=3.0pt]
  table[row sep=crcr]{%
  20	1.2138e-01\\
  40	1.2138e-01\\
  80	1.2138e-01\\
 160	1.2138e-01\\
};
\addplot [color=mycolor3, line width=1.0pt]
  table[row sep=crcr]{%
  20	1.2018e-01\\
  40	1.2019e-01\\
  80	1.2019e-01\\
 160	1.2019e-01\\
};
\addplot [color=mycolor4, line width=2.0pt]
  table[row sep=crcr]{%
  20	3.8247e-03\\
  40	3.8193e-03\\
  80	3.8193e-03\\
 160	3.8193e-03\\
};
\addplot [color=mycolor5, line width=2.0pt]
  table[row sep=crcr]{%
  20	1.8920e-06\\
  40	1.9000e-06\\
  80	1.9000e-06\\
 160	1.9000e-06\\
};
\end{axis}
\end{tikzpicture}%

  \caption{\label{fig:ex1_weak}Results investigating weak convergence for \Cref{ex:ode}}
\end{figure}
shows the results of these scalar products. The value of the homogenised problem
was calculated using MAPLE and the given formula for $u^{\hom}$. For the numerical simulation
we used a fixed equidistant mesh with $N=10\cdot n$ cells in the spatial direction and $\mbound=64$
cells in the time direction in the upper plot. We do observe, that the functions
$v$ being constant in space give a constant result, while we have
results of order $\ord{n^{-1}}$  for $v(t,x)=x$ and $v(t,x)=x^2$. For $v(t,x)=\sin(\uppi x)$, we observe an initial second order
convergence $\ord{n^{-2}}$ but the graph stagnates at a certain value. The reason for this
behaviour lies in the approximation of $u_{n}$.
In fact, \Cref{fig:ex1_weak} does not show $\scprod{u_n-u^{\hom}}{v}_{\Lp{2}}$ but rather
\begin{equation*}
  \scprod{\tilde{u}_n-u^{\hom}}{v}_{\Lp{2}}
  = \scprod{\tilde{u}_n-u_n}{v}_{\Lp{2}}+\scprod{u_n-u^{\hom}}{v}_{\Lp{2}}
\end{equation*}
replacing $u_n$ by a numerical approximation $\tilde{u}_n$ using \eqref{eq:numerical-formulation}.
In order to investigate the effect of this numerical approximation, we fixed $n=1$ for the lower
plots in \Cref{fig:ex1_weak} and varied the mesh in time with $\mbound\in\{64,\,128,\,256,\,512,\,1024,\,2048\}$
and in space with $N\in\{20,\,40,\,80,\,160\}$, respectively.
We observe a reduction for $v=1$ and $v(t,x)=t$, when refining in time, but none for refining in space.
This indicates, that the accuracy is limited by the approximation errors in time, which are
of order $10^{-6}$ in this example.

These plots underpin the weak convergence $u_{n}\weakto u^{\hom}$, and even suggest vaguely a rate of order $n^{-1}$. If true in general, this would complement quantitative findings for strong convergence for evolutionary equations (cf.~\cite{CoEsWa24,FrWa18}).

\subsection{Continuation of  \Cref{ex:pde-without-compactness}~\ref{item:1D-glueing-example}}

We consider the sequence of equations ($n\in\N$)
\begin{multline*}
  \left[\partial_t
    \begin{pmatrix}
      1 & 0\\
      0 & 1
    \end{pmatrix}
    +
    \begin{pmatrix}
      \sin\left(2\uppi n x\right) & 0\\
      0 & \sin\left(2\uppi n x\right)
    \end{pmatrix}
    \right.
     \\ %
    +
    \left.
    \begin{pmatrix}
      0 & \partial_x\indicator_{[-1,0]}\\
      -(\partial_x\indicator_{[-1,0]})^* & 0
    \end{pmatrix}
  \right]
  \begin{pmatrix}
    u_n\\v_n
  \end{pmatrix}
  =
  \begin{pmatrix}
    \sin(2\uppi t)\\
    x-\frac{1}{2}
  \end{pmatrix}
\end{multline*}
 in $[0,2]\times[-1,1]$.
\begin{figure}[htbp]
  \begin{center}
    \fbox{\includegraphics[width=0.45\textwidth]{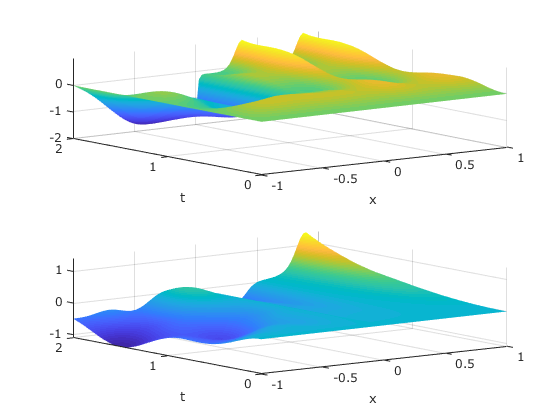}}
    \fbox{\includegraphics[width=0.45\textwidth]{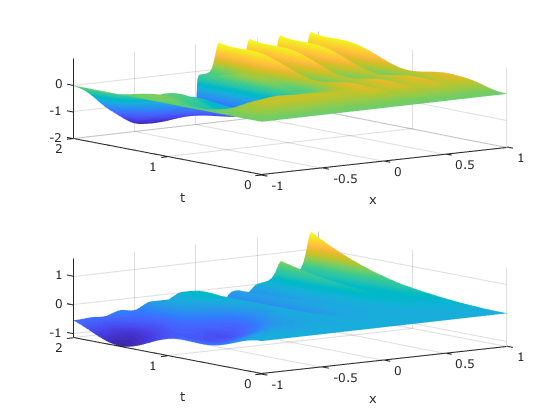}}\\
    \fbox{\includegraphics[width=0.45\textwidth]{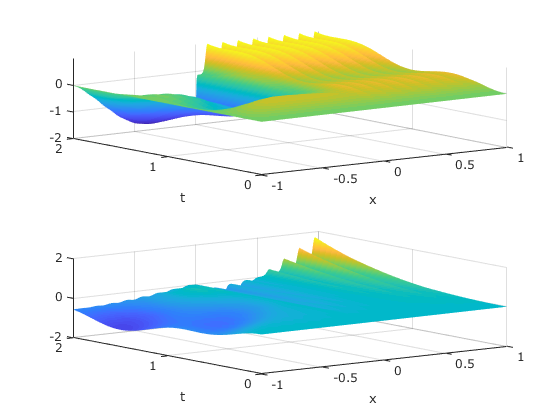}}
    \fbox{\includegraphics[width=0.45\textwidth]{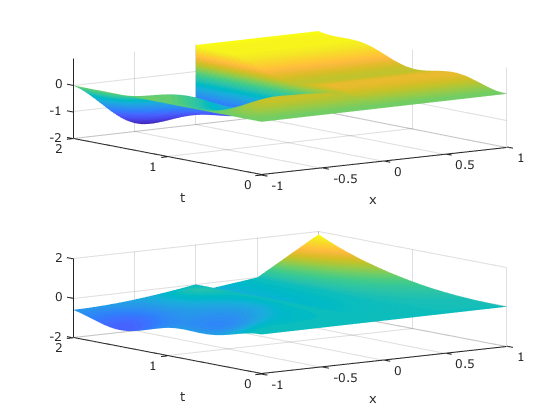}}
  \end{center}
  \caption{\label{fig:1D-glueing-example}Solutions of \Cref{ex:pde-without-compactness}~\ref{item:1D-glueing-example} for $n\in\{2,4,8,1024\}$ from top left to bottom right.}
\end{figure}
\Cref{fig:1D-glueing-example} shows $u_n$ in the upper and $v_n$ in the
lower subplots for different $n\in\N$. As expected, they suggest strong convergence on the spatial $[0,1]$-part and weak convergence on the $[-1,0]$-part.
In order to investigate this further, we use a numerical solution
$(\tilde{u}^{\hom},\tilde{v}^{\hom}) \coloneqq (\tilde{u}_{n},\tilde{v}_{n})$ by setting
$n=2^{13}$, as well as mesh parameters $N=40n=327\,680$, $\mbound=2^8=256$, and we increase the degrees of the piecewise
polynomials to 3 in space and 2 in time in \eqref{eq:numerical-formulation}.

For the numerical simulations, we again vary $n$
and chose $\mbound=2^6$, $N=40\cdot n$ and polynomial degrees 2 in space and 1 in time.
\begin{figure}[htbp]
%
%
\definecolor{mycolor1}{rgb}{0.00000,0.44700,0.74100}%
\definecolor{mycolor2}{rgb}{0.85000,0.32500,0.09800}%
\definecolor{mycolor3}{rgb}{0.92900,0.69400,0.12500}%
\definecolor{mycolor4}{rgb}{0.49400,0.18400,0.55600}%
\definecolor{mycolor5}{rgb}{0.46600,0.67400,0.18800}%
\begin{tikzpicture}

\begin{axis}[%
width=0.615\textwidth,
height=0.2\textwidth,
scale only axis,
xmin=1,
xmax=10,
xtick={1,2,3,4,5,6,7,8,9,10},
xticklabels={},
ymode=log,
ymin=5e-06,
ymax=1e-1,
yminorticks=true,
axis background/.style={fill=white},
legend style={at={(1.03,0.5)}, anchor=west, legend cell align=left, align=left, draw=white!15!black}
]
\addplot [color=mycolor1, line width=2.0pt]
  table[row sep=crcr]{%
 1	2.6876e-02\\                
 2	1.5720e-02\\                
 3	8.3994e-03\\                
 4	4.3219e-03\\                
 5	2.1812e-03\\                
 6	1.0856e-03\\                
 7	5.3156e-04\\                
 8	2.5297e-04\\                
 9	1.1329e-04\\                
10	4.3353e-05\\                
};
\addlegendentry{$\abs{\scprod{\tilde{u}_n-\tilde{u}^{\hom}}{1}_{\Lp{2}}}$}
\addplot [color=mycolor2, line width=2.0pt]
  table[row sep=crcr]{%
 1	5.2193e-03\\
 2	1.1863e-03\\
 3	2.8753e-04\\
 4	7.3944e-05\\
 5	2.0112e-05\\
 6	5.6652e-06\\
 7	1.4627e-06\\
 8	1.0451e-07\\
 9	3.9037e-07\\
10	5.9195e-07\\
};
\addlegendentry{$\abs{\scprod{\tilde{u}_n-\tilde{u}^{\hom}}{x}_{\Lp{2}}}$}
\addplot [color=mycolor3, line width=2.0pt]
  table[row sep=crcr]{%
 1	5.0660e-02\\
 2	2.6640e-02\\
 3	1.3570e-02\\
 4	6.8296e-03\\
 5	3.4162e-03\\
 6	1.6997e-03\\
 7	8.3911e-04\\
 8	4.0827e-04\\
 9	1.9270e-04\\
10	8.4887e-05\\
};
\addlegendentry{$\abs{\scprod{\tilde{u}_n-\tilde{u}^{\hom}}{x^2}_{\Lp{2}}}$}
\addplot [color=mycolor4, line width=2.0pt]
  table[row sep=crcr]{%
 1	2.0667e-02\\
 2	1.1536e-02\\
 3	6.1232e-03\\
 4	3.1577e-03\\
 5	1.6047e-03\\
 6	8.0987e-04\\
 7	4.0781e-04\\
 8	2.0560e-04\\
 9	1.0420e-04\\
10	5.3430e-05\\
};
\addlegendentry{$\abs{\scprod{\tilde{u}_n-\tilde{u}^{\hom}}{\sin(\uppi x)}_{\Lp{2}}}$}
\addplot [color=mycolor5, line width=2.0pt]
  table[row sep=crcr]{%
 1	2.5203e-02\\
 2	1.6238e-02\\
 3	8.9809e-03\\
 4	4.6899e-03\\
 5	2.3821e-03\\
 6	1.1880e-03\\
 7	5.8099e-04\\
 8	2.7499e-04\\
 9	1.2137e-04\\
10	4.4399e-05\\
};
\addlegendentry{$\abs{\scprod{\tilde{u}_n-\tilde{u}^{\hom}}{t}_{\Lp{2}}}$}
\addplot [color=black, dotted]
  table[row sep=crcr]{%
 1	9e-02\\
10	1.7578e-04\\
};
\addlegendentry{$\ord{n^{-1}}$}
\end{axis}

\begin{axis}[%
width=0.615\textwidth,
height=0.2\textwidth,
at={(0,-3.75cm)},
scale only axis,
xmin=1,
xmax=10,
xtick={1,2,3,4,5,6,7,8,9,10},
xticklabels={$2^1$,$2^2$,$2^3$,$2^4$,$2^5$,$2^6$,$2^7$,$2^8$,$2^9$,$2^{10}$},
xlabel = $n$,
ymode=log,
ymin=1e-05,
ymax=1e-0,
yminorticks=true,
axis background/.style={fill=white},
legend style={at={(1.03,0.5)}, anchor=west, legend cell align=left, align=left, draw=white!15!black}
]
\addplot [color=mycolor1, line width=2.0pt]
  table[row sep=crcr]{%
 1	1.3463e-01\\                  
 2	6.7930e-02\\                  
 3	3.4210e-02\\                  
 4	1.7166e-02\\                  
 5	8.5883e-03\\                  
 6	4.2838e-03\\                  
 7	2.1275e-03\\                  
 8	1.0483e-03\\                  
 9	5.0847e-04\\                  
10	2.3848e-04\\                  
};
\addlegendentry{$\abs{\scprod{\tilde{v}_n-\tilde{v}^{\hom}}{1}_{\Lp{2}}}$}
\addplot [color=mycolor2, line width=2.0pt]
  table[row sep=crcr]{%
 1	1.3731e-02\\
 2	6.6376e-03\\
 3	3.4097e-03\\
 4	1.7461e-03\\
 5	8.8651e-04\\
 6	4.4786e-04\\
 7	2.2609e-04\\
 8	1.1456e-04\\
 9	5.8638e-05\\
10	3.0634e-05\\
};
\addlegendentry{$\abs{\scprod{\tilde{v}_n-\tilde{v}^{\hom}}{x}_{\Lp{2}}}$}
\addplot [color=mycolor3, line width=2.0pt]
  table[row sep=crcr]{%
 1	5.2210e-02\\
 2	2.6934e-02\\
 3	1.3679e-02\\
 4	6.8909e-03\\
 5	3.4554e-03\\
 6	1.7272e-03\\
 7	8.6050e-04\\
 8	4.2650e-04\\
 9	2.0934e-04\\
10	1.0072e-04\\
};
\addlegendentry{$\abs{\scprod{\tilde{v}_n-\tilde{v}^{\hom}}{x^2}_{\Lp{2}}}$}
\addplot [color=mycolor4, line width=2.0pt]
  table[row sep=crcr]{%
 1	4.4856e-02\\
 2	2.4297e-02\\
 3	1.2480e-02\\
 4	6.3054e-03\\
 5	3.1634e-03\\
 6	1.5801e-03\\
 7	7.8557e-04\\
 8	3.8759e-04\\
 9	1.8843e-04\\
10	8.8808e-05\\
};
\addlegendentry{$\abs{\scprod{\tilde{v}_n-\tilde{v}^{\hom}}{\sin(\uppi x)}_{\Lp{2}}}$}
\addplot [color=mycolor5, line width=2.0pt]
  table[row sep=crcr]{%
 1	2.1389e-01\\
 2	1.0731e-01\\
 3	5.3900e-02\\
 4	2.7011e-02\\
 5	1.3504e-02\\
 6	6.7320e-03\\
 7	3.3413e-03\\
 8	1.6448e-03\\
 9	7.9617e-04\\
10	3.7180e-04\\
};
\addlegendentry{$\abs{\scprod{\tilde{v}_n-\tilde{v}^{\hom}}{t}_{\Lp{2}}}$}
\addplot [color=black, dotted]
  table[row sep=crcr]{%
 1	9e-01\\
10	1.7578e-03\\
};
\addlegendentry{$\ord{n^{-1}}$}
\end{axis}
\end{tikzpicture}%

  \caption{\label{fig:1D-glueing-example-weak}Results investigating weak convergence for \Cref{ex:pde-without-compactness}~\ref{item:1D-glueing-example}}
\end{figure}
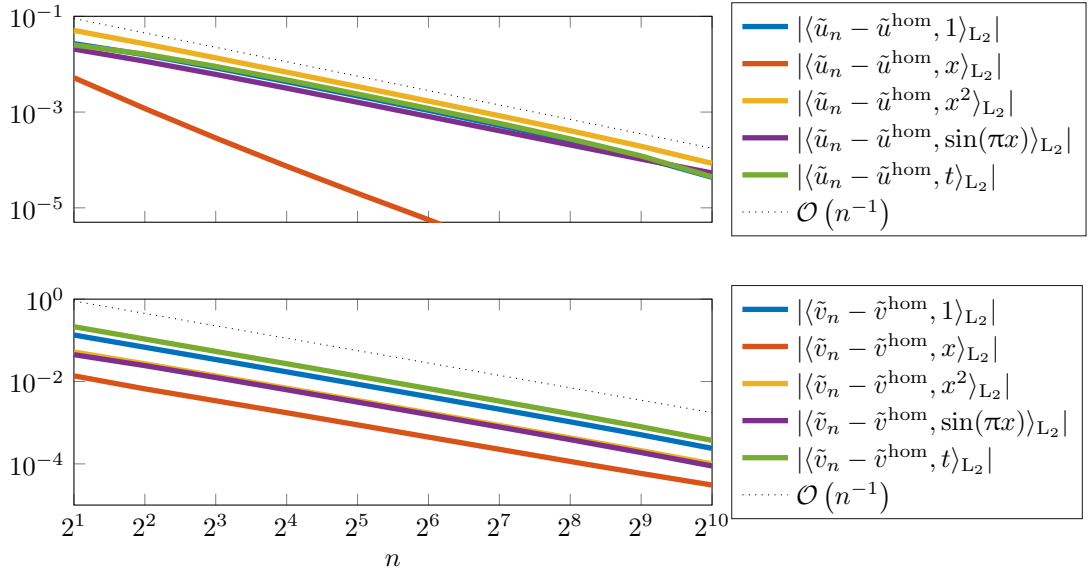
\Cref{fig:1D-glueing-example-weak}
shows the results of the simulations for $u$ in the upper and $v$ in the lower graphs. Subject to (small) approximation errors elaborated on in the previous example, both components of the solution weakly converge. The plots suggest convergence of order $\ord{n^{-1}}$ for either component.

\subsection{Continuation of \Cref{ex:pde-without-compactness}~\ref{item:ex3}}
We consider the sequence of equations ($n\in\N$)
\begin{equation*}
  (\partial_t M_{0,n} + M_{1,n} + A) U_{n} = \begin{pmatrix}
    \sin(2\uppi t)\\
    0
  \end{pmatrix}
\end{equation*}
with a final time horizon $T= 2$.

Again, we want to confirm the convergence behaviour of $U_n=(u_n,v_n)$ and its components. Our theoretical finding predicts strong convergence for $u_n$ and weak convergence for $v_n$.  For the numerical method \eqref{eq:numerical-formulation}, we use
piecewise polynomials of degree 2 in space and one in time that are continuous in
space -- and therefore $\sobH^{1}$-conforming -- for the first component. For the second component, we use
Raviart--Thomas-elements $RT_1$ of degree 1 in space and 1 in time -- these are
$\sobH(\div)$-conforming.
As mesh parameter, we choose $\mbound=2^6$ and a tensor product mesh in space consisting of
a piecewise equidistant mesh with meshsize $1/(4n)$ in $[-1,1]$ and $1/n$ outside for the $x$-direction and
an equidistant mesh with 80 cells in the $y$-direction. We fix the mesh in $y$-direction as the effects are anticipated to occur only in $x$-direction. Thus, the number of cells in total is $10\cdot n\cdot 80$. Note that the spatial mesh resolves the stratified structure of the coefficients.
\begin{figure}
  \includegraphics[width=0.24\textwidth]{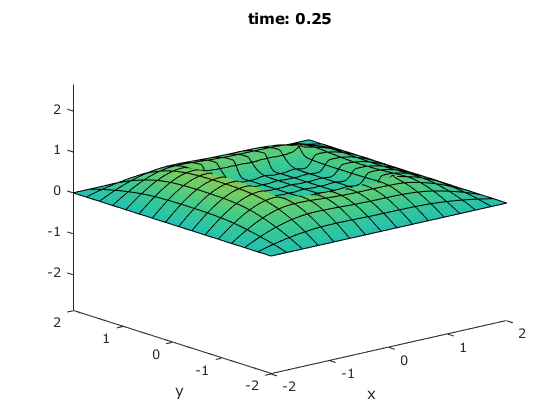}
  \includegraphics[width=0.24\textwidth]{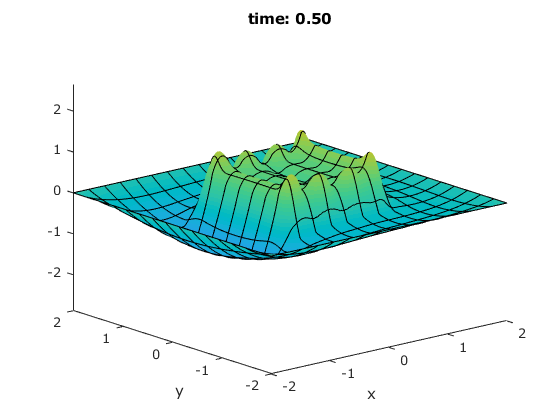}
  \includegraphics[width=0.24\textwidth]{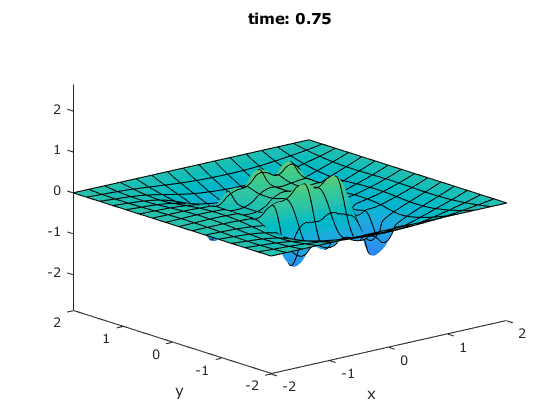}
  \includegraphics[width=0.24\textwidth]{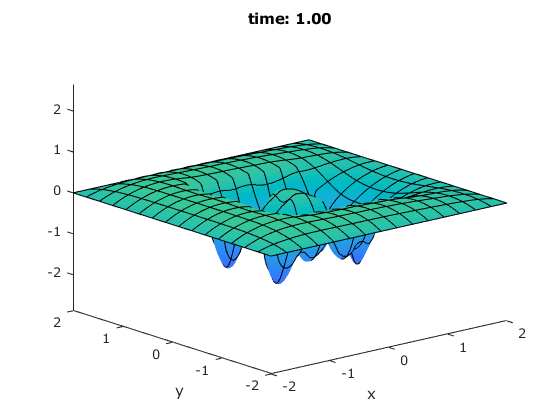}\\
  \includegraphics[width=0.24\textwidth]{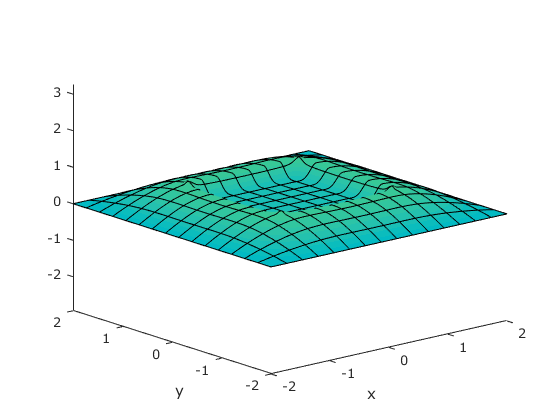}
  \includegraphics[width=0.24\textwidth]{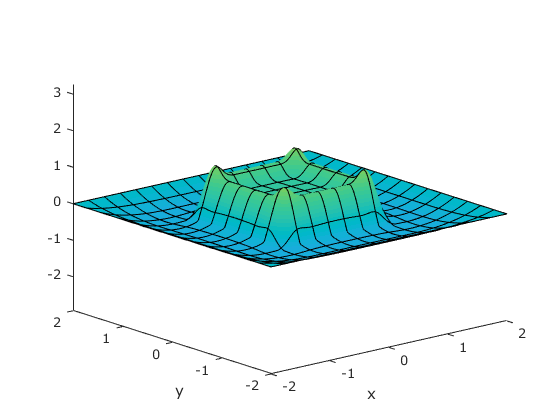}
  \includegraphics[width=0.24\textwidth]{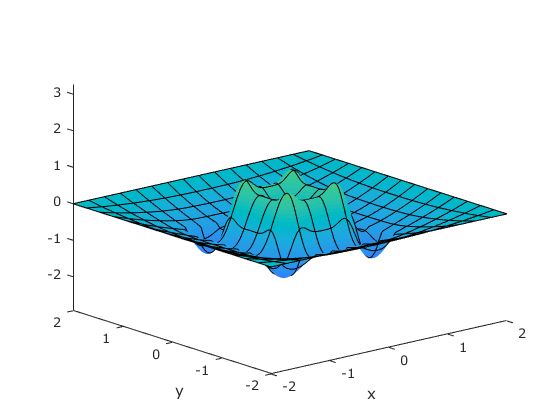}
  \includegraphics[width=0.24\textwidth]{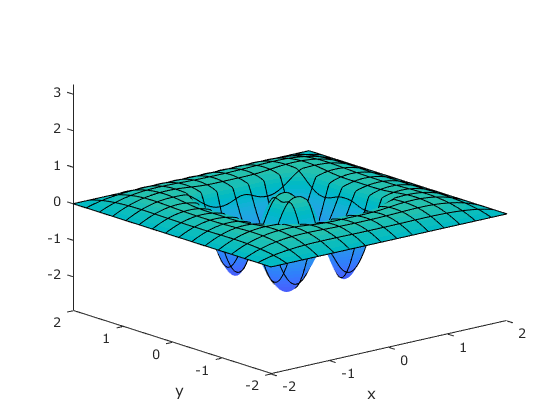}
  \caption{\label{fig:ex3}Plots of the first component of the solution of \Cref{ex:pde-without-compactness}~\ref{item:ex3} $u_{8}$ (top)
    and $u^{\hom}$ (bottom) for $t\in\{0.25,0.5,0.75,1.0\}$ (left to right)}
\end{figure}

For $t\in\{0.25,0.5,0.75,1.0\}$, \Cref{fig:ex3} shows plots of the first component of $u_{8}$ of $U_8$ and $u^{\hom}$ of $U^{\hom}$.
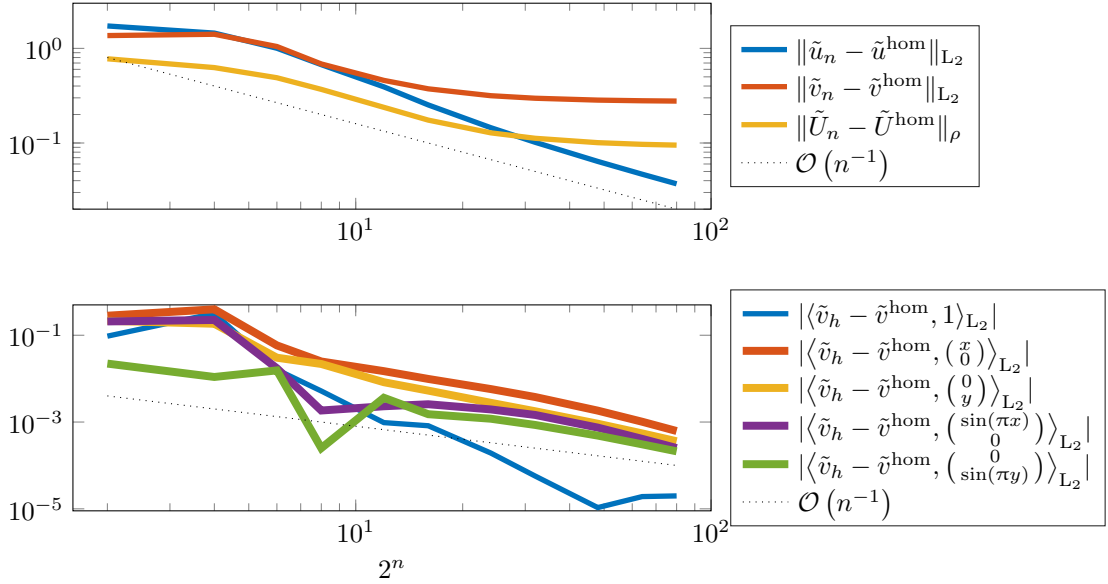
\begin{figure}[htbp]
%
%
\definecolor{mycolor1}{rgb}{0.00000,0.44700,0.74100}%
\definecolor{mycolor2}{rgb}{0.85000,0.32500,0.09800}%
\definecolor{mycolor3}{rgb}{0.92900,0.69400,0.12500}%
\definecolor{mycolor4}{rgb}{0.49400,0.18400,0.55600}%
\definecolor{mycolor5}{rgb}{0.46600,0.67400,0.18800}%
\begin{tikzpicture}

\begin{axis}[%
width=0.615\textwidth,
height=0.2\textwidth,
scale only axis,
xmode=log,
xmin=1.6,
xmax=100,
ymode=log,
ymin=2e-02,
ymax=3,
yminorticks=true,
axis background/.style={fill=white},
legend style={at={(1.03,0.5)}, anchor=west, legend cell align=left, align=left, draw=white!15!black}
]
\addplot [color=mycolor1, line width=2.0pt]
  table[row sep=crcr]{%
  2	1.725e+00\\
  4	1.450e+00\\
  6	1.003e+00\\
  8	6.728e-01\\
 12	3.921e-01\\
 16	2.525e-01\\
 24	1.451e-01\\
 32	1.017e-01\\
 48	6.385e-02\\
 64	4.678e-02\\
 80	3.703e-02\\
};
\addlegendentry{$\norm{\tilde u_n-\tilde u^{\hom}}_{\Lp{2}}$}
\addplot [color=mycolor2, line width=2.0pt]
  table[row sep=crcr]{%
  2	1.365e+00\\
  4	1.409e+00\\
  6	1.046e+00\\
  8	6.843e-01\\
 12	4.587e-01\\
 16	3.736e-01\\
 24	3.160e-01\\
 32	2.971e-01\\
 48	2.839e-01\\
 64	2.794e-01\\
 80	2.773e-01\\
};
\addlegendentry{$\norm{\tilde v_n-\tilde v^{\hom}}_{\Lp{2}}$}
\addplot [color=mycolor3, line width=2.0pt]
  table[row sep=crcr]{%
  2	7.761e-01\\
  4	6.262e-01\\
  6	4.892e-01\\
  8	3.687e-01\\
 12	2.381e-01\\
 16	1.745e-01\\
 24	1.280e-01\\
 32	1.121e-01\\
 48	1.008e-01\\
 64	9.678e-02\\
 80	9.493e-02\\
};
\addlegendentry{$\norm{\tilde U_n-\tilde U^{\hom}}_{\rho}$}
\addplot [color=black, dotted]
  table[row sep=crcr]{%
  2	0.80\\
 80	0.02\\
};
\addlegendentry{$\ord{n^{-1}}$}
\end{axis}

\begin{axis}[%
at={(0,-4cm)},
width=0.615\textwidth,
height=0.2\textwidth,
scale only axis,
xmode=log,
xmin=1.6,
xmax=100,
xlabel = $n$,
ymode=log,
ymin=9e-06,
ymax=5e-1,
yminorticks=true,
axis background/.style={fill=white},
legend style={at={(1.03,0.5)}, anchor=west, legend cell align=left, align=left, draw=white!15!black}
]
\addplot [color=mycolor1, line width=2.0pt]
  table[row sep=crcr]{%
 2	9.515e-02\\    
 4	3.185e-01\\    
 6	1.600e-02\\    
 8	5.185e-03\\    
12	9.700e-04\\    
16	8.214e-04\\    
24	1.930e-04\\    
32	5.692e-05\\    
48	1.069e-05\\    
64	1.917e-05\\    
80	1.987e-05\\    
};
\addlegendentry{$\abs{\scprod{\tilde{v}_h-\tilde v^{\hom}}{1}_{\Lp{2}}}$}
\addplot [color=mycolor2, line width=3.0pt]
  table[row sep=crcr]{%
 2	2.819e-01\\
 4	3.957e-01\\
 6	5.831e-02\\
 8	2.482e-02\\
12	1.488e-02\\
16	9.841e-03\\
24	5.751e-03\\
32	3.769e-03\\
48	1.832e-03\\
64	1.016e-03\\
80	6.212e-04\\
};
\addlegendentry{$\abs{\scprod*{\tilde{v}_h-\tilde v^{\hom}}{\left(\begin{smallmatrix}x\\0\end{smallmatrix}\right)}_{\Lp{2}}}$}
\addplot [color=mycolor3, line width=3.0pt]
  table[row sep=crcr]{%
 2	2.150e-01\\
 4	1.837e-01\\
 6	3.035e-02\\
 8	2.176e-02\\
12	8.273e-03\\
16	5.299e-03\\
24	2.771e-03\\
32	1.772e-03\\
48	9.142e-04\\
64	5.472e-04\\
80	3.610e-04\\
};
\addlegendentry{$\abs{\scprod*{\tilde{v}_h-\tilde v^{\hom}}{\left(\begin{smallmatrix}0\\y\end{smallmatrix}\right)}_{\Lp{2}}}$}
\addplot [color=mycolor4, line width=3.0pt]
  table[row sep=crcr]{%
 2	2.079e-01\\
 4	2.248e-01\\
 6	1.729e-02\\
 8	1.844e-03\\
12	2.308e-03\\
16	2.570e-03\\
24	1.953e-03\\
32	1.456e-03\\
48	7.445e-04\\
64	4.093e-04\\
80	2.518e-04\\
};
\addlegendentry{$\abs{\scprod*{\tilde{v}_h-\tilde v^{\hom}}{\left(\begin{smallmatrix}\sin(\uppi x)\\0\end{smallmatrix}\right)}_{\Lp{2}}}$}
\addplot [color=mycolor5, line width=3.0pt]
  table[row sep=crcr]{%
 2	2.220e-02\\
 4	1.092e-02\\
 6	1.548e-02\\
 8	2.509e-04\\
12	3.620e-03\\
16	1.516e-03\\
24	1.192e-03\\
32	8.566e-04\\
48	4.870e-04\\
64	3.064e-04\\
80	2.117e-04\\
};
\addlegendentry{$\abs{\scprod*{\tilde{v}_h-\tilde v^{\hom}}{\left(\begin{smallmatrix}0\\\sin(\uppi y)\end{smallmatrix}\right)}_{\Lp{2}}}$}
\addplot [color=black, dotted]
  table[row sep=crcr]{%
 2	4e-3\\
80	1e-4\\
};
\addlegendentry{$\ord{n^{-1}}$}
\end{axis}
\end{tikzpicture}%

  \caption{\label{fig:ex3_conv}Results investigating convergence for \Cref{ex:pde-without-compactness}~\ref{item:ex3}}
\end{figure}
%
%
\Cref{fig:ex3_conv} investigates the convergence in the classical $\Lp{2}$-norm,
and also weak convergence.
We observe convergence of order $\ord{n^{-1}}$ for the $\Lp{2}$-norm of the first component, but a
stagnation in the norm-convergence of the second component and consequently of the weighted norm. Still, the second component converges weakly. Again, the graphs suggest weak convergence of order $\ord{n^{-1}}$.

\subsection{Continuation of \Cref{ex:pde-without-compactness}~\ref{item:ex4}}
We once again consider the sequence of equations ($n\in\N$)
\begin{equation*}
  (\partial_t M_{0,n} + M_{1,n} + A) U_{n} = \begin{pmatrix}
    \sin(2\uppi t)\\
    0
  \end{pmatrix}
\end{equation*}
with a final time horizon $T= 2$.
A direct approach incorporating the non-local operator in $M_1$ (see~\labelcref{eq:memoryterminhomlimit1}) necessitates the inclusion of history terms as right-hand data while evaluating the time steps. This is potentially numerically cumbersome. Fortunately, we can deal with it differently:

Let us define an intrinsic variable $w$ with $w(t,x)=0$ for $t\leq 0$ by
\begin{equation*}
  (1+\partial_{t})^{-1} v_{2} = w
  \quad\Leftrightarrow\quad
  v_{2} = (1 + \partial_{t})w
  \quad\Leftrightarrow\quad
  \partial_{t} w + w - v_{2} = 0.
\end{equation*}
With this we rewrite the homogenised equation of \Cref{ex:pde-without-compactness-limits}~\ref{item:ex4-limit} as
\begin{align}\label{eq:ex4_hom2}
  (\partial_t \widehat M_0+\widehat M_1+\widehat A)\widehat U^{\hom} &= \widehat{F},
\end{align}
where $\widehat U^{\hom} \coloneqq (u^{\hom},v^{\hom},w^{\hom})\transposed$, $\widehat F \coloneqq (f,g,0)\transposed$ for $F = (f,g)\transposed$ and
\begin{align*}
  \widehat{M}_{0}
  &\coloneqq\indicator_{\Omega_{1}}
    \begin{pmatrix}
      \frac{3}{2} & 0 & 0\\
      0 & \begin{pmatrix}
            \frac{1}{2} & 0\\
            0 & 0
          \end{pmatrix}& 0\\
      0 & 0 & 2
    \end{pmatrix}
    + (1 - \indicator_{\Omega_{1}})
    \begin{pmatrix}
      \epsilon_0 & 0     & 0\\
      0 & \mu_0 & 0\\
      0 & 0 & 0
    \end{pmatrix},\\
  \widehat{M}_{1}
  &\coloneqq\indicator_{\Omega_{1}}
    \begin{pmatrix}
      0 & 0 & 0\\
      0 &
          \begin{pmatrix}
            \frac{1}{2} & 0\\
            0 & 2
          \end{pmatrix}
      &
        \begin{pmatrix}0\\-2\end{pmatrix}\\
      0 & \begin{pmatrix}0 & -2\end{pmatrix} & 2
    \end{pmatrix}
    + (1 - \indicator_{\Omega_{1}})
    \begin{pmatrix}
      0 & 0 & 0\\
      0 & 0 & 0\\
      0 & 0 & 1
    \end{pmatrix},
  \\
  \widehat{A}
  &\coloneqq
    \begin{pmatrix}
      0 & \div & 0\\
      \cgrad & 0 & 0\\
      0 & 0 & 0
    \end{pmatrix}.
\end{align*}
We added the equation $(1 - \indicator_{\Omega_{1}})w^{\hom} = 0$ in order to define $w^{\hom}$ outside $\Omega_{1}$.
We can easily verify that \eqref{eq:ex4_hom2} falls under the regime of \Cref{thm:wpee}. This formulation of the homogenised problem does not contain memory terms and
the standard method \eqref{eq:numerical-formulation} developed in \cite{FrTrWa19} applies, if we choose an appropriate function space for the third component and set homogeneous initial conditions. As compatibility in space is not needed, discontinuous, piecewise polynomial finite elements of order one less than for the first component are possible. All in all, numerical costs are higher compared to the direct approach, but the implementation is easier.

\begin{figure}
  \includegraphics[width=0.24\textwidth]{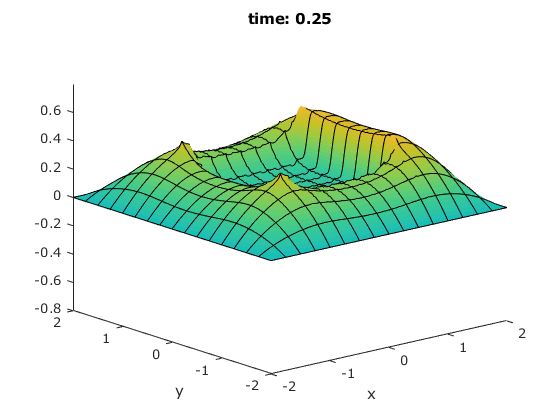}
  \includegraphics[width=0.24\textwidth]{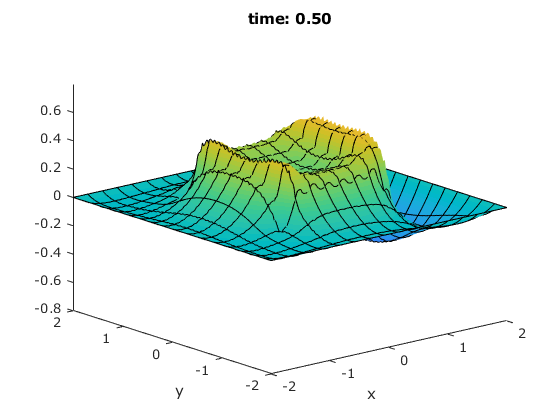}
  \includegraphics[width=0.24\textwidth]{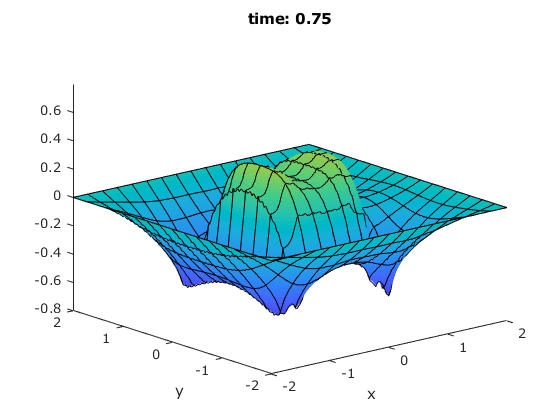}
  \includegraphics[width=0.24\textwidth]{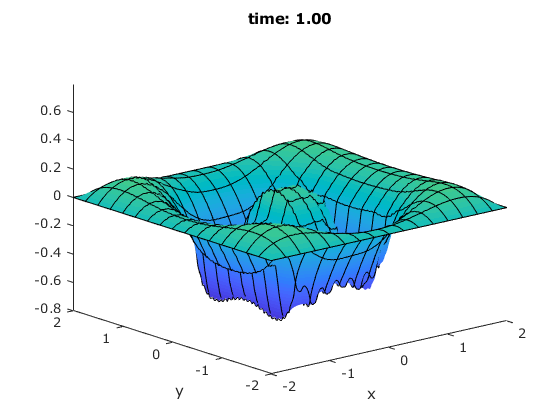}\\
  \includegraphics[width=0.24\textwidth]{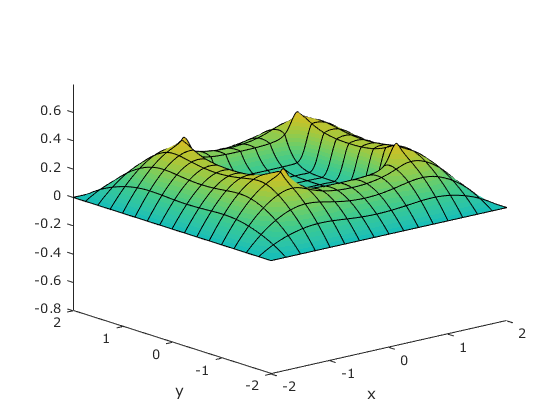}
  \includegraphics[width=0.24\textwidth]{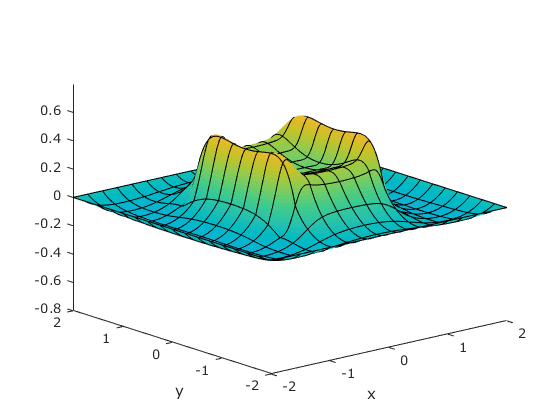}
  \includegraphics[width=0.24\textwidth]{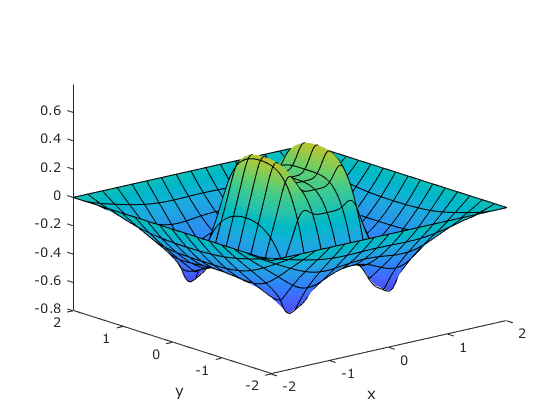}
  \includegraphics[width=0.24\textwidth]{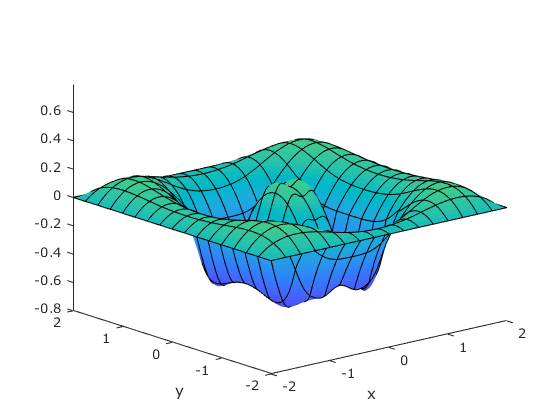}\\
  \includegraphics[width=0.24\textwidth]{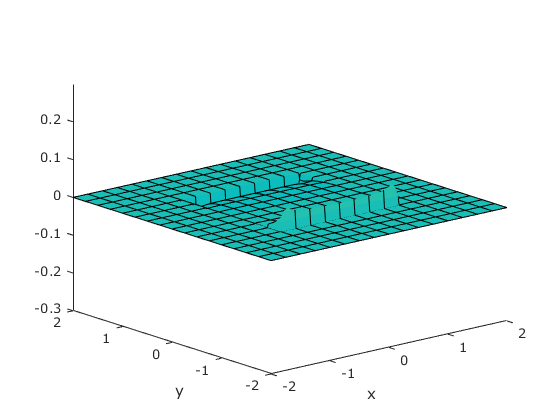}
  \includegraphics[width=0.24\textwidth]{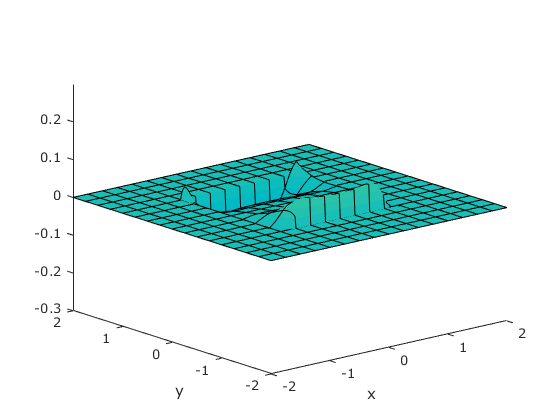}
  \includegraphics[width=0.24\textwidth]{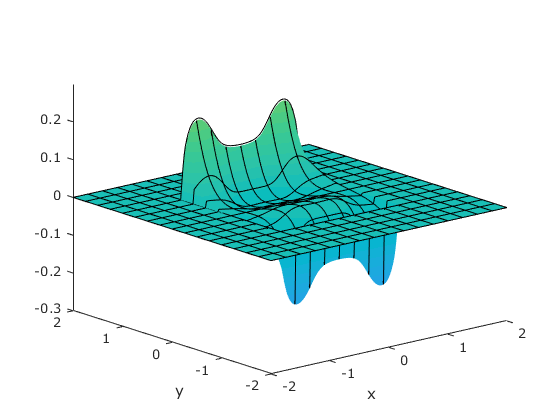}
  \includegraphics[width=0.24\textwidth]{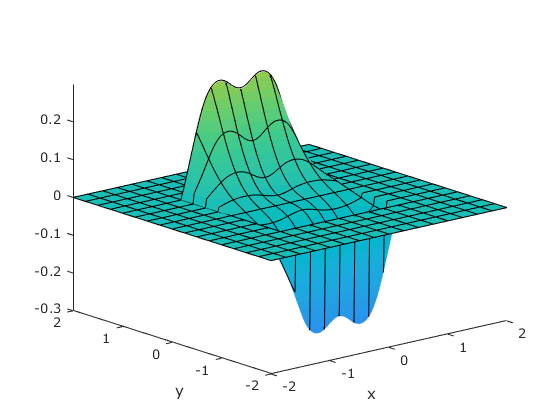}
  \caption{\label{fig:ex4}Plots of the first component of the solution $U_8$ of \Cref{ex:pde-without-compactness}~\ref{item:ex4} (top),
    the first (middle) and third component (bottom) of $\widehat U^{\hom}$ of \eqref{eq:ex4_hom2}
    for $t\in\{0.25,0.5,0.75,1.0\}$ (left to right)}
\end{figure}
For $t\in\{0.25,0.5,0.75,1.0\}$, \Cref{fig:ex4} shows plots of the first component of $U_8$ and
of $U^{\hom}$. 
In addition, the pictures in the bottom row exemplify the behaviour of the intrinsic variable $w^{\hom}$, i.e., the memory term, which apparently may not be neglected.

\begin{figure}[htbp]
%
%
\definecolor{mycolor1}{rgb}{0.00000,0.44700,0.74100}%
\definecolor{mycolor2}{rgb}{0.85000,0.32500,0.09800}%
\definecolor{mycolor3}{rgb}{0.92900,0.69400,0.12500}%
\definecolor{mycolor4}{rgb}{0.49400,0.18400,0.55600}%
\definecolor{mycolor5}{rgb}{0.46600,0.67400,0.18800}%
\begin{tikzpicture}

\begin{axis}[%
width=0.615\textwidth,
height=0.2\textwidth,
scale only axis,
xmode=log,
xmin=1.6,
xmax=110,xticklabels={,10,100},
ymode=log,
ymin=1e-02,
ymax=2,
yminorticks=true,
axis background/.style={fill=white},
legend style={at={(1.03,0.5)}, anchor=west, legend cell align=left, align=left, draw=white!15!black}
]
\addplot [color=mycolor1, line width=2.0pt]
  table[row sep=crcr]{%
  2	7.338e-01\\
  4	6.983e-01\\
  6	4.625e-01\\
  8	3.342e-01\\
 12	2.435e-01\\
 16	1.625e-01\\
 24	8.797e-02\\
 32	6.291e-02\\
 48	4.083e-02\\
 64	3.049e-02\\
 80	2.452e-02\\
100	1.985e-02\\
};
\addlegendentry{$\norm{\tilde u_n-\tilde u^{\hom}}_{\Lp{2}}$}
\addplot [color=mycolor2, line width=2.0pt]
  table[row sep=crcr]{%
  2	1.276e+00\\
  4	1.365e+00\\
  6	1.104e+00\\
  8	9.899e-01\\
 12	9.225e-01\\
 16	8.971e-01\\
 24	8.773e-01\\
 32	8.711e-01\\
 48	8.665e-01\\
 64	8.648e-01\\
 80	8.639e-01\\ 
100	8.633e-01\\ 
};
\addlegendentry{$\norm{\tilde v_n-\tilde v^{\hom}}_{\Lp{2}}$}
\addplot [color=mycolor3, line width=2.0pt]
  table[row sep=crcr]{%
  2	4.879e-01\\
  4	5.071e-01\\
  6	4.299e-01\\
  8	3.872e-01\\
 12	3.518e-01\\
 16	3.324e-01\\
 24	3.186e-01\\
 32	3.148e-01\\
 48	3.122e-01\\
 64	3.113e-01\\
 80	3.108e-01\\ 
100	3.105e-01\\ 
};
\addlegendentry{$\norm{\tilde U_n- \begin{psmallmatrix} \tilde{u}^{\hom} \\ \tilde{v}^{\hom} \end{psmallmatrix}}_{\rho}$}
\addplot [color=black, dotted]
  table[row sep=crcr]{%
  2	0.50\\
100	0.01\\
};
\addlegendentry{$\ord{n^{-1}}$}
\end{axis}

\begin{axis}[%
at={(0,-4cm)},
width=0.615\textwidth,
height=0.2\textwidth,
scale only axis,
xmode=log,
xmin=1.6,
xmax=110,xticklabels={,10,100},
xlabel = $n$,
ymode=log,
ymin=8e-05,
ymax=3e-1,
yminorticks=true,
axis background/.style={fill=white},
legend style={at={(1.03,0.5)}, anchor=west, legend cell align=left, align=left, draw=white!15!black}
]
\addplot [color=mycolor1, line width=2.0pt]
  table[row sep=crcr]{%
  2	7.601e-03\\
  4	1.407e-01\\
  6	5.991e-02\\
  8	3.611e-02\\
 12	1.941e-02\\
 16	1.291e-02\\
 24	8.345e-03\\
 32	6.189e-03\\
 48	4.095e-03\\
 64	3.063e-03\\
 80	2.447e-03\\ 
100	1.956e-03\\ 
};
\addlegendentry{$\abs{\scprod{\tilde{v}_n-\tilde v^{\hom}}{1}_{\Lp{2}}}$}
\addplot [color=mycolor2, line width=2.0pt]
  table[row sep=crcr]{%
  2	6.380e-02\\
  4	6.953e-02\\
  6	3.241e-02\\
  8	4.265e-03\\
 12	5.946e-03\\
 16	3.171e-03\\
 24	2.960e-03\\
 32	2.289e-03\\
 48	1.479e-03\\
 64	1.099e-03\\
 80	9.034e-04\\ 
100	7.743e-04\\ 
};
\addlegendentry{$\abs{\scprod{\tilde{v}_n-\tilde v^{\hom}}{\begin{psmallmatrix}x\\0\end{psmallmatrix}}_{\Lp{2}}}$}
\addplot [color=mycolor3, line width=2.0pt]
  table[row sep=crcr]{%
  2	4.939e-03\\
  4	1.100e-02\\
  6	3.262e-03\\
  8	1.045e-02\\
 12	9.328e-03\\
 16	6.517e-03\\
 24	4.915e-03\\
 32	3.791e-03\\
 48	2.627e-03\\
 64	2.076e-03\\
 80	1.777e-03\\ 
100	1.567e-03\\ 
};
\addlegendentry{$\abs{\scprod*{\tilde{v}_n-\tilde v^{\hom}}{\begin{psmallmatrix}0\\y\end{psmallmatrix}}_{\Lp{2}}}$}
\addplot [color=mycolor4, line width=2.0pt]
  table[row sep=crcr]{%
  2	1.184e-01\\
  4	2.962e-02\\
  6	2.359e-02\\
  8	1.760e-03\\
 12	3.613e-04\\
 16	1.769e-03\\
 24	8.541e-04\\
 32	6.711e-04\\
 48	6.025e-04\\
 64	5.463e-04\\
 80	4.980e-04\\ 
100	4.554e-04\\ 
};
\addlegendentry{$\abs{\scprod*{\tilde{v}_n-\tilde v^{\hom}}{\begin{psmallmatrix}\sin(\pi x) \\ 0\end{psmallmatrix}}_{\Lp{2}}}$}
\addplot [color=mycolor5, line width=2.0pt]
  table[row sep=crcr]{%
  2	3.214e-02\\
  4	1.483e-02\\
  6	2.004e-03\\
  8	1.034e-04\\
 12	1.414e-03\\
 16	3.112e-04\\
 24	5.711e-04\\
 32	4.039e-04\\
 48	2.557e-04\\
 64	1.974e-04\\
 80	1.655e-04\\
100	1.413e-04\\
};
\addlegendentry{$\abs{\scprod*{\tilde{v}_n-\tilde v^{\hom}}{\begin{psmallmatrix}0 \\ \sin(\uppi y)\end{psmallmatrix}}_{\Lp{2}}}$}
\addplot [color=black, dotted]
  table[row sep=crcr]{%
  2	5e-3\\
100	1e-4\\
};
\addlegendentry{$\ord{n^{-1}}$}
\end{axis}
\end{tikzpicture}%

  \caption{\label{fig:ex4_conv}Results investigating convergence for \Cref{ex:pde-without-compactness}~\ref{item:ex4} and \eqref{eq:ex4_hom2}}
\end{figure}
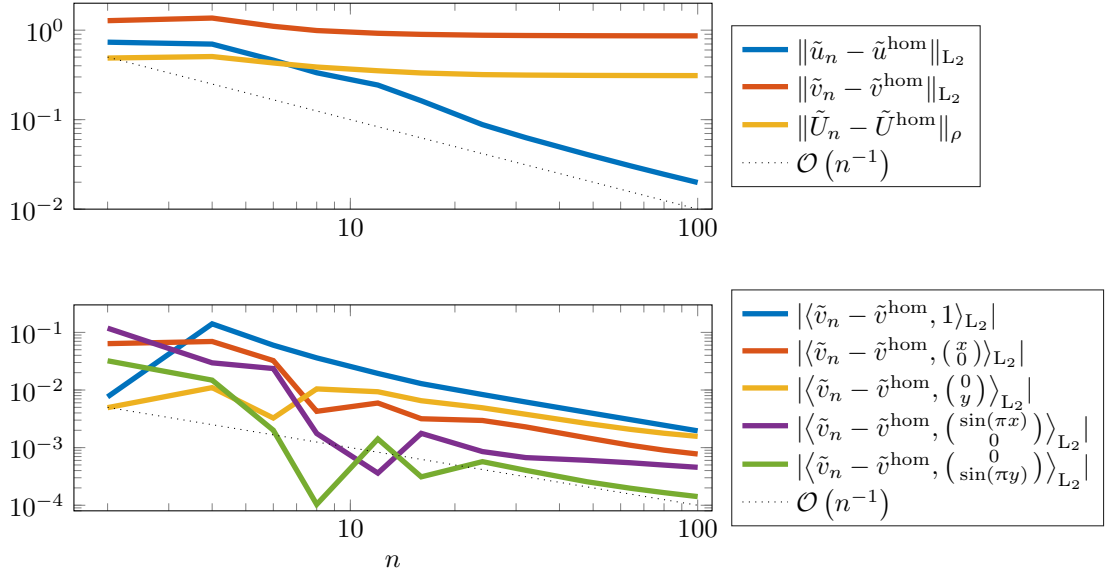
\Cref{fig:ex4_conv} shows the convergence results of the numerical simulations. In the computation of $\tilde U_n$ using \eqref{eq:numerical-formulation}, we used a piecewise polynomials of one degree
higher than in the previous calculations, because the numerical solution was still
very oscillatory using quadratic elements for $\tilde u_n$. Using higher order finite element methods can provide a
natural stabilisation that reduces the oscillations, see e.g., \cite{BR94}, and we can observe first order
convergence in the $\Lp{2}$-norm of $u_n$ to $u^{\hom}$ in the upper graph of \Cref{fig:ex4_conv}. As in the previous example, $v_n$ does not
converge strongly in the $\Lp{2}$-norm to $v^{\hom}$, but weakly as depicted in the lower graph.

\subsection{Continuation of \Cref{ex:pde-without-compactness}~\ref{item:ex5}}
We once again consider the sequence of equations ($n\in\N$)
\begin{equation*}
  (\partial_t M_{0,n} + M_{1,n} + A) U_{n} = \begin{pmatrix}
    \sin(2\uppi t)\\
    0
  \end{pmatrix}
\end{equation*}
with a final time horizon $T= 2$.
According to \Cref{ex:pde-without-compactness-limits}~\ref{item:ex5-limit} its homogenised limit has a memory term.
In the same way as in the previous example, we can introduce an intrinsic variable $v = \sigma \eps (\sigma + \eps\partial_{t})^{-1} E_{1}$
and rewrite the system in the form
\begin{align}\label{eq:ex5_hom2}
  (\partial_t \widehat M_0+\widehat M_1+\widehat A)\widehat U^{\hom}&=\widehat F,
\end{align}
where $\widehat U^{\hom} = (E^{\hom},H^{\hom},v^{\hom})\transposed$, $\widehat F = (J,K,0)\transposed$ for $F = (J,K)\transposed$ and
\begin{align*}
  \widehat M_0
  &= \indicator_{\Omega_{1}}
    \begin{pNiceMatrix}[left-margin=0.7em]
      0 & 0 & 0 & & & & \\
      0 & \tfrac{1}{2} \epsilon & 0 & & 0 & & 0 \\
      0 & 0 & \tfrac{1}{2} \epsilon & & & &  \\
        &   & & \tfrac{4}{3} \mu & 0 & 0 &\\
        & 0 & & 0 & \tfrac{3}{2}\mu & 0 & 0\\
        &   & & 0 & 0 & \tfrac{3}{2}\mu & \\
        & 0  & & & 0& & 2\epsilon
      \CodeAfter
      \SubMatrix({1-1}{3-3})
      \SubMatrix({4-4}{6-6})
    \end{pNiceMatrix}
    + (1-\indicator_{\Omega_{1}})
    \begin{pmatrix}
      \epsilon_0 & 0&0\\
      0 & \mu_0&0\\
      0&0&0
    \end{pmatrix},
  \\
  \widehat M_1
  &= \indicator_{\Omega_{1}}
    \begin{pNiceMatrix}[left-margin=0.7em, right-margin=0.7em]
      2\sigma & 0 & 0 & \phantom{000}& -2\sigma \\
      0 & \frac{1}{2}\sigma & 0 & 0 & 0 \\
      0 & 0 & \frac{1}{2}\sigma & & 0 \\
      & 0 & & 0 & 0 \\
      -2\sigma & 0 & 0 & 0 & 2\sigma 
      \CodeAfter
      \SubMatrix({1-1}{3-3})
      \SubMatrix({1-5}{3-5})
      \SubMatrix({5-1}{5-3})
    \end{pNiceMatrix}
    + (1-\indicator_{\Omega_{1}})
    \begin{pmatrix}
      0&0&0\\
      0&0&0\\
      0&0&1
    \end{pmatrix},\\
  \widehat A
  &= \begin{pmatrix}
       0&-\curl&0\\
       \ccurl&0&0\\
       0&0&0
     \end{pmatrix}.
\end{align*}
Once again, we have added the equation $(1-\indicator_{\Omega_{1}})v^{\hom} = 0$
in order to define $v^{\hom}$ everywhere, and, once again, \Cref{thm:wpee}
renders \eqref{eq:ex5_hom2} uniquely solvable. For its numerical simulation, we
observe the curse of dimensions and cannot use as large numbers of $n$ in \eqref{eq:numerical-formulation} as for the previous problems.
Again, we use a reference solution with
a fine mesh instead of a given exact solution to the homogenised problem. Just to give an impression of its size: its number of degrees of freedom is 7 millions,
9 millions and 12 millions for the three respective components. For the stratified problem, the finest resolution
had 31 millions and 40 millions degrees of freedom in its two components.
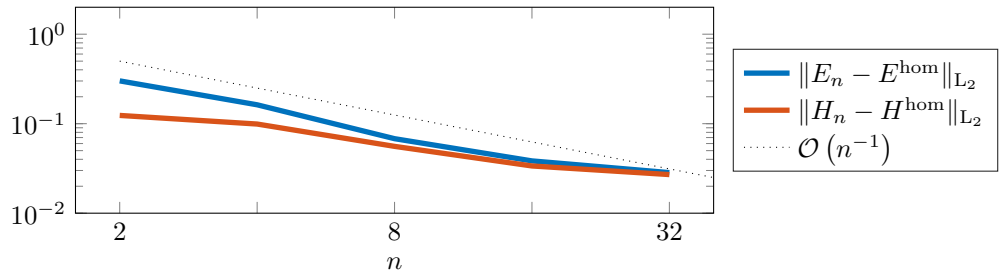
\begin{figure}[htbp]
%
%
\definecolor{mycolor1}{rgb}{0.00000,0.44700,0.74100}%
\definecolor{mycolor2}{rgb}{0.85000,0.32500,0.09800}%
\definecolor{mycolor3}{rgb}{0.92900,0.69400,0.12500}%
\definecolor{mycolor4}{rgb}{0.49400,0.18400,0.55600}%
\definecolor{mycolor5}{rgb}{0.46600,0.67400,0.18800}%
\begin{tikzpicture}

\begin{axis}[%
width=0.615\textwidth,
height=0.2\textwidth,
scale only axis,
xmode=log,
xmin=1.6,
xmax=40,xtick={2,4,8,16,32},xticklabels={2,,8,,32},
xlabel = $n$,
ymode=log,
ymin=1e-02,
ymax=2,
yminorticks=true,
axis background/.style={fill=white},
legend style={at={(1.03,0.5)}, anchor=west, legend cell align=left, align=left, draw=white!15!black}
]
\addplot [color=mycolor1, line width=2.0pt]
  table[row sep=crcr]{%
  2	0.3009\\
  4	0.1626\\
  8	0.0680\\
 16	0.0384\\
 32	0.0284\\
};
\addlegendentry{$\norm{E_n-E^{\hom}}_{\Lp{2}}$}
\addplot [color=mycolor2, line width=2.0pt]
  table[row sep=crcr]{%
  2	0.1237\\
  4	0.0991\\
  8	0.0558\\
 16	0.0337\\
 32	0.0270\\
};
\addlegendentry{$\norm{H_n-H^{\hom}}_{\Lp{2}}$}
\addplot [color=black, dotted]
  table[row sep=crcr]{%
  2	0.50\\
100	0.01\\
};
\addlegendentry{$\ord{n^{-1}}$}
\end{axis}

\end{tikzpicture}%

  \caption{\label{fig:ex5a_conv}Computed errors for the full 3D Maxwell example \eqref{eq:ex5_hom2}.}
\end{figure}
The values for the differences are given in \Cref{fig:ex5a_conv}. Both components suggest a convergence in $n$ to the solution of the homogenised problem, however, with an order less than 1.
Whether or not this behaviour can be confirmed will be further investigated looking at weak convergence. Even though we took the following small sample of test functions
\begin{align*}
  v_1(t,\vecsymb{x}) &= \pmtrx{1 \\ 0 \\ 0}, & v_2(t,\vecsymb{x}) &= \pmtrx{0 \\ 1 \\ 0}, & v_3(t,\vecsymb{x}) &= \pmtrx{0 \\ 0 \\ 1},
  \\
  v_4(t,\vecsymb{x}) &= \pmtrx{\sin(\uppi x)\\0\\0},& v_5(t,\vecsymb{x}) &= \pmtrx{0 \\
  \sin(\uppi y)\\0},& v_6(t,\vecsymb{x})&=\pmtrx{0\\0\\\sin(\uppi z)},
  \\
  v_7(t,\vecsymb{x}) &= \pmtrx{xy\\0\\0},& v_8(t,\vecsymb{x}) &= \pmtrx{0\\y^2+z\\0}, & v_9(t,\vecsymb{x})&=\pmtrx{0\\0\\xyz},
\end{align*}
\begin{figure}[h]
%
%
\definecolor{mycolor1}{rgb}{0.00000,0.44700,0.74100}%
\definecolor{mycolor2}{rgb}{0.85000,0.32500,0.09800}%
\definecolor{mycolor3}{rgb}{0.92900,0.69400,0.12500}%
\definecolor{mycolor4}{rgb}{0.49400,0.18400,0.55600}%
\definecolor{mycolor5}{rgb}{0.46600,0.67400,0.18800}%
\begin{tikzpicture}

\begin{axis}[%
at={(0,-4cm)},
width=0.615\textwidth,
height=0.2\textwidth,
scale only axis,
xmode=log,
xmin=1.6,
xmax=40,xtick={2,4,8,16,32},xticklabels={2,,8,,32},
xlabel = $n$,
ymode=log,
ymin=1e-05,
ymax=3e-1,
yminorticks=true,
axis background/.style={fill=white},
legend style={at={(1.03,0.5)}, anchor=west, legend cell align=left, align=left, draw=white!15!black}
]
\addplot [color=mycolor1, line width=2.0pt]
  table[row sep=crcr]{%
  2	3.739e-03\\
  4	3.350e-03\\
  8	3.383e-03\\
 16	3.162e-03\\
 32	3.221e-03\\
};
\addlegendentry{$\abs{\scprod{E_n-E^{\hom}}{v_1}_{\Lp{2}}}$}
\addplot [color=mycolor2, line width=2.0pt]
  table[row sep=crcr]{%
  2	1.057e-04\\
  4	2.222e-04\\
  8	5.866e-04\\
 16	7.763e-04\\
 32	8.122e-04\\
};
\addlegendentry{$\abs{\scprod{E_n-E^{\hom}}{v_2}_{\Lp{2}}}$}
\addplot [color=mycolor3, line width=2.0pt]
  table[row sep=crcr]{%
  2	5.754e-02\\
  4	1.145e-03\\
  8	7.631e-03\\
 16	5.851e-03\\
 32	6.066e-03\\
};
\addlegendentry{$\abs{\scprod{E_n-E^{\hom}}{v_3}_{\Lp{2}}}$}
\addplot [color=mycolor4, line width=2.0pt]
  table[row sep=crcr]{%
  2	2.767e-04\\
  4	3.166e-04\\
  8	4.963e-04\\
 16	5.470e-05\\
 32	1.738e-04\\
};
\addlegendentry{$\abs{\scprod{E_n-E^{\hom}}{v_4}_{\Lp{2}}}$}
\addplot [color=mycolor5, line width=2.0pt]
  table[row sep=crcr]{%
  2	1.500e-03\\
  4	2.430e-04\\
  8	5.559e-04\\
 16	9.850e-04\\
 32	1.218e-03\\
};
\addlegendentry{$\abs{\scprod{E_n-E^{\hom}}{v_5}_{\Lp{2}}}$}
\addplot [color=mycolor1, line width=2.0pt, dashed]
  table[row sep=crcr]{%
  2	8.247e-03\\
  4	1.047e-02\\
  8	2.282e-03\\
 16	7.302e-04\\
 32	1.956e-04\\
};
\addlegendentry{$\abs{\scprod{E_n-E^{\hom}}{v_6}_{\Lp{2}}}$}
\addplot [color=mycolor2, line width=2.0pt, dashed]
  table[row sep=crcr]{%
  2	3.377e-03\\
  4	1.725e-03\\
  8	8.566e-04\\
 16	1.566e-04\\
 32	1.344e-04\\
};
\addlegendentry{$\abs{\scprod{E_n-E^{\hom}}{v_7}_{\Lp{2}}}$}
\addplot [color=mycolor3, line width=2.0pt, dashed]
  table[row sep=crcr]{%
  2	2.999e-03\\
  4	2.227e-03\\
  8	2.449e-03\\
 16	2.451e-03\\
 32	2.375e-03\\
};
\addlegendentry{$\abs{\scprod{E_n-E^{\hom}}{v_8}_{\Lp{2}}}$}
\addplot [color=mycolor4, line width=2.0pt, dashed]
  table[row sep=crcr]{%
  2	2.105e-02\\
  4	1.808e-03\\
  8	6.694e-03\\
 16	6.122e-03\\
 32	5.170e-03\\
};
\addlegendentry{$\abs{\scprod{E_n-E^{\hom}}{v_9}_{\Lp{2}}}$}
\addplot [color=black, dotted]
  table[row sep=crcr]{%
  2	5e-3\\
100	1e-4\\
};
\addlegendentry{$\ord{n^{-1}}$}
\end{axis}
\end{tikzpicture}%

  \caption{\label{fig:ex5b_conv}Investigation of probable weak convergence of the full 3D Maxwell example \eqref{eq:ex5_hom2}.}
\end{figure}
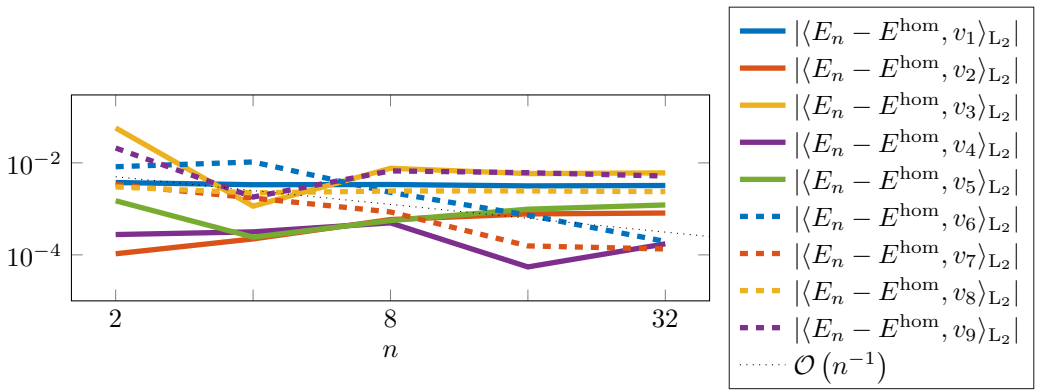
a similar convergence behaviour cannot be confirmed as \Cref{fig:ex5b_conv} shows (the results for $H_n$ are similar). We are probably still far away from the convergent regime, but at the same time at the end of our computational possibilities. Although our theoretical findings assert that the numbers will go down eventually, this did not happen numerically for $n\leq 32$.

\section{Conclusion}\label{sec:con}

We have applied an abstract continuous dependence result to homogenisation problems for ordinary differential equations as well as partial differential equations of potentially mixed type. Further theoretical insight led to norm-convergence statements, where only weak convergence was explicitly known. Numerical findings support these abstract results.

\section*{Acknowledgements}

M.W.\ acknowledges useful discussions with Dirk Pauly and Rainer Picard concerning the Helmholtz decomposition in 2D.

\section*{Data Availability}

The data leading to the present study can be shared upon personal request to S.F.

\appendix

\section{}\label{sec:app}%

\begin{lemma}\label{le:dirich-grad-vanishes-for-skew-selfad}
  For $d\in\N$, an open $\Omega\subseteq\R^{d}$, and a skew-selfadjoint $C\in\R^{d\times d}$,
  \begin{equation*}
    \scprod{C\cgrad u}{\cgrad v}_{\Lp{2}(\Omega)^{d}}=0
  \end{equation*}
  holds for all $u,v\in \cH^{1}(\Omega)$.
\end{lemma}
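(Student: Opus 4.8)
The plan is to reduce, by density, to smooth compactly supported functions and then integrate by parts. First I would recall that $\cH^{1}(\Omega) = \dom(\cgrad)$ is by construction the closure of $\Cc(\Omega)$ under the graph norm of $\grad\restrict_{\Cc}$, so that $\cgrad u = \lim_{n}\grad\phi_{n}$ in $\Lp{2}(\Omega)^{d}$ whenever $\phi_{n}\to u$ in that graph norm. Since
\begin{equation*}
  \abs{\scprod{C\cgrad u}{\cgrad v}_{\Lp{2}(\Omega)^{d}}} \leq \norm{C}\,\norm{\cgrad u}_{\Lp{2}(\Omega)^{d}}\,\norm{\cgrad v}_{\Lp{2}(\Omega)^{d}},
\end{equation*}
the sesquilinear form $(u,v)\mapsto\scprod{C\cgrad u}{\cgrad v}_{\Lp{2}(\Omega)^{d}}$ is continuous on $\cH^{1}(\Omega)\times\cH^{1}(\Omega)$, so it suffices to establish vanishing for $u,v\in\Cc(\Omega)$ and then pass to the limit in both arguments simultaneously.

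For $\phi,\psi\in\Cc(\Omega)$ and indices $i,j\in\{1,\dots,d\}$, two integrations by parts — no boundary terms appear, since the functions are compactly supported in $\Omega$, whatever the shape of $\Omega$ — together with $\partial_{i}\partial_{j}\psi = \partial_{j}\partial_{i}\psi$ give
\begin{equation*}
  \int_{\Omega}\overline{\partial_{i}\phi}\,\partial_{j}\psi \dx
  = -\int_{\Omega}\overline{\phi}\,\partial_{i}\partial_{j}\psi \dx
  = -\int_{\Omega}\overline{\phi}\,\partial_{j}\partial_{i}\psi \dx
  = \int_{\Omega}\overline{\partial_{j}\phi}\,\partial_{i}\psi \dx .
\end{equation*}
Hence the matrix with entries $\int_{\Omega}\overline{\partial_{i}\phi}\,\partial_{j}\psi \dx$ is symmetric; passing to approximating sequences, the same holds for $S\coloneqq\bigl(\scprod{(\cgrad u)_{i}}{(\cgrad v)_{j}}_{\Lp{2}(\Omega)}\bigr)_{i,j=1}^{d}$, i.e.\ $S = S\transposed$.

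Finally I would use skew-selfadjointness: for the real matrix $C$, selfadjointness with respect to the standard inner product on $\C^{d}$ reduces to transposition, so $C\transposed = -C$. With the convention that $\Lp{2}$-inner products are anti-linear in the first slot, one computes
\begin{equation*}
  \scprod{C\cgrad u}{\cgrad v}_{\Lp{2}(\Omega)^{d}}
  = \sum_{i,j=1}^{d}C_{ij}\,S_{ji}
  = \sum_{i,j=1}^{d}C_{ij}\,S_{ij}
  = -\sum_{i,j=1}^{d}C_{ji}\,S_{ij}
  = -\sum_{i,j=1}^{d}C_{ij}\,S_{ij},
\end{equation*}
where the second equality uses $S = S\transposed$, the third uses $C = -C\transposed$, and the fourth relabels $i\leftrightarrow j$ and again uses $S = S\transposed$. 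The sum thus equals its own negative, hence vanishes. I do not expect a genuine obstacle; the only points demanding care are the bookkeeping of the complex conjugation in the $\C^{d}$-inner product (so that the symmetry of $S$ and the skew-symmetry of $C$ are combined with the right signs) and carrying out the density step for both arguments at once.
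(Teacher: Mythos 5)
Your proof is correct and follows essentially the same route as the paper's: reduce by density to $u,v\in\Cc(\Omega)$, integrate by parts, invoke Schwarz's theorem, and then exploit the skew-symmetry of $C$. The only cosmetic difference is that you package the symmetry of the mixed second-derivative pairing into a symmetric matrix $S$ and finish algebraically, whereas the paper applies Schwarz and skew-symmetry directly inside the integral after a single integration by parts.
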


\begin{proof}
  Clearly, we only have to prove the claim for $u,v\in \Cc (\Omega)$. In that case integration by parts yields ($C=(c_{ij})_{1\leq j,k\leq d}$)
  \begin{equation}\label{eq:dirich-grad-vanishes-for-skew-selfad-test-fct}
    \scprod{C\cgrad u}{\cgrad v}_{\Lp{2}(\Omega)^{d}}=\int_{\Omega} \sum_{j=1}^{d}\sum_{k=1}^{d} c_{jk}\overline{\partial_{k}u}\partial_{j} v \dx
    =-\int_{\Omega} \overline{u}\sum_{j=1}^{d}\sum_{k=1}^{d} c_{jk}\partial_{k}\partial_{j} v \dx
  \end{equation}
  The skew-selfadjointness and Schwarz's theorem imply
  \begin{equation*}
    c_{jk}\partial_{k}\partial_{j}v=-c_{kj}\partial_{k}\partial_{j}v=-c_{kj}\partial_{j}\partial_{k}v
  \end{equation*}
  for $1\leq j,k\leq d$. With that, we can immediately deduce that~\labelcref{eq:dirich-grad-vanishes-for-skew-selfad-test-fct} vanishes.
\end{proof}

\begin{lemma}\label{le:equivalent-M-for-real-matrices}
  Let $a \in \R^{d\times d}$. Then
  \begin{equation*}
    \Big( \forall x \in \R^{d}\colon  \scprod{a x}{x}_{\R^{d}} \geq \alpha \norm{x}^{2}_{\R^{d}}\Big)
    \quad\Leftrightarrow\quad \Big( \forall z \in \C^{d}\colon
    \Re \scprod{a z}{z}_{\C^{d}} \geq \alpha \norm{z}^{2}_{\C^{d}}\Big)
  \end{equation*}
\end{lemma}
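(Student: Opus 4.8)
The plan is to prove the two implications separately. The right-to-left direction is immediate, and the left-to-right direction reduces, after splitting a complex vector into its real and imaginary parts, to applying the real inequality twice.

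For ``$\Leftarrow$'': restrict the complex estimate to $z = x \in \R^{d} \subseteq \C^{d}$. Since $a$ has real entries, $\scprod{ax}{x}_{\C^{d}} = \scprod{ax}{x}_{\R^{d}}$ is real, so its real part equals itself, and, noting $\norm{x}_{\C^{d}} = \norm{x}_{\R^{d}}$ for real $x$, the claimed real inequality follows verbatim.

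For ``$\Rightarrow$'': fix $z \in \C^{d}$ and write $z = x + \iu y$ with $x,y \in \R^{d}$. Because $a$ is real-valued it commutes with complex conjugation, so $az = ax + \iu\,ay$. Using that the inner product is anti-linear in the first and linear in the second argument, expand $\scprod{az}{z}_{\C^{d}}$ into four terms; the point is that the two ``mixed'' terms are purely imaginary (they are $\iu\scprod{ax}{y}_{\R^{d}}$ and $-\iu\scprod{ay}{x}_{\R^{d}}$, with real coefficients $\scprod{ax}{y}_{\R^{d}}$, $\scprod{ay}{x}_{\R^{d}}$), hence drop out of the real part. This gives $\Re\scprod{az}{z}_{\C^{d}} = \scprod{ax}{x}_{\R^{d}} + \scprod{ay}{y}_{\R^{d}}$. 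Applying the hypothesis to $x$ and to $y$ and using $\norm{z}^{2}_{\C^{d}} = \norm{x}^{2}_{\R^{d}} + \norm{y}^{2}_{\R^{d}}$ then yields the bound by $\alpha\norm{z}^{2}_{\C^{d}}$.

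There is essentially no hard step here; the only things to be careful about are the sign and conjugation bookkeeping forced by the convention that the inner product is anti-linear in the first slot, and the (trivial but worth flagging) fact that realness of $a$ is exactly what makes $a$ commute with conjugation, so that no symmetry assumption on $a$ is needed.
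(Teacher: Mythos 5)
Your proof is correct and follows essentially the same route as the paper's: the $\Leftarrow$ direction by restriction to real vectors, and the $\Rightarrow$ direction by writing $z = x + \iu y$, expanding $\scprod{az}{z}_{\C^d}$, observing the mixed terms are purely imaginary, and applying the real hypothesis to $x$ and $y$ separately. The paper's proof is just a more compressed version of the same computation.
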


\begin{proof}
  Clearly the implication ``$\Leftarrow$'' is true. Hence, let $z = x + \iu y \in \C^{d}$, where $x, y \in \R^{d}$. Then
  \begin{align*}
    \Re \scprod{a z}{z}_{\C^{d}}
    &= \Re \bigl( \scprod{ax}{x}_{\C^{d}} + \scprod{ax}{\iu y}_{\C^{d}} + \scprod{a \iu y}{x}_{\C^{d}} + \scprod{a \iu y}{\iu y}_{\C^{d}}\bigr)
    = \scprod{a x}{x}_{\R^{d}} + \scprod{a y}{y}_{\R^{d}} \\
    &\geq \alpha {(\norm{x}_{\R^{d}}^{2} + \norm{y}_{\R^{d}}^{2})} = \alpha \norm{z}_{\C^{d}}^{2}.\tag*{\qedhere}
  \end{align*}
\end{proof}

\begin{theorem}\label{thm:WeakStarLimitPerMultOp}
For $d\in\mathbb{N}$, consider $f\in\Lp{\infty}(\R^d)$ with $f(\argdot+k)=f$ for all $k\in\mathbb{Z}^d$. Furthermore, let $\Omega\subseteq\R^d$ be measurable with non-zero measure.
Then, the sequence of bounded multiplication operators $(f(n\argdot))_{n\in\mathbb{N}}$ in $\Lb(\Lp{2}(\Omega))$ converges to $\int_{(0,1)^d}f(x)\dx$ in the weak operator topology.
\end{theorem}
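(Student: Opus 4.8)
The plan is to verify the defining property of weak operator convergence directly: for all $u,v\in\Lp{2}(\Omega)$ one must show $\scprod{u}{f(n\argdot)v}_{\Lp{2}(\Omega)}\to\bigl(\int_{(0,1)^d}f\bigr)\scprod{u}{v}_{\Lp{2}(\Omega)}$. Since $\norm{f(n\argdot)}_{\Lb(\Lp{2}(\Omega))}\le\norm{f}_{\Lp{\infty}}$ uniformly in $n$, a standard $3\eps$-argument reduces this to checking convergence on a dense subset of $\Lp{2}(\Omega)$ in each argument. Extending functions by zero from $\Omega$ to $\R^d$, one sees that the restrictions to $\Omega$ of step functions on $\R^d$ (finite linear combinations of indicators of bounded boxes) are dense in $\Lp{2}(\Omega)$; hence, by (anti)bilinearity, it remains to treat $u=\indicator_{R}$ and $v=\indicator_{S}$ for bounded boxes $R,S\subseteq\R^d$, i.e., to show
\begin{equation*}
  \int_{E} f(nx)\dx[x]\to |E|\int_{(0,1)^d}f(x)\dx[x],\qquad E\coloneqq\Omega\cap R\cap S,
\end{equation*}
where $E$ is a bounded measurable set.

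Setting $g\coloneqq f-\int_{(0,1)^d}f$, which is again $\Z^d$-periodic and bounded but now has mean zero over $(0,1)^d$, the claim becomes $\int_E g(nx)\dx[x]\to0$. I would establish this first for $E$ a bounded box $Q$: the substitution $y=nx$ gives $\int_Q g(nx)\dx[x]=n^{-d}\int_{nQ}g(y)\dx[y]$, and $nQ$ is exactly covered by the unit cells $k+(0,1)^d$, $k\in\Z^d$, that it fully contains — on each of which $g$ integrates to $0$ by periodicity — together with a boundary layer of measure $\ord{n^{d-1}}$ on which $\abs{g}\le\norm{g}_{\Lp{\infty}}$. Hence $n^{-d}\int_{nQ}g=\ord{n^{-1}}\to0$.

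For a general bounded measurable $E$ I would then invoke outer regularity of Lebesgue measure: given $\eps>0$, pick a finite union of bounded boxes $F$ with $\abs{E\triangle F}<\eps$. Then $\bigl\lvert\int_E g(nx)\dx[x]-\int_F g(nx)\dx[x]\bigr\rvert\le\norm{g}_{\Lp{\infty}}\abs{E\triangle F}<\norm{g}_{\Lp{\infty}}\eps$ uniformly in $n$, while $\int_F g(nx)\dx[x]\to0$ by the box case together with finite additivity; letting $n\to\infty$ and then $\eps\to0$ completes the argument. The only delicate points are bookkeeping ones: making the density reduction robust for an \emph{arbitrary} measurable $\Omega$ (which is precisely why one passes through step functions on $\R^d$ extended by zero rather than, say, $\Cc(\Omega)$, which may be trivial) and the elementary cell-count controlling the boundary layer. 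There is no genuine analytic obstacle — this is the classical weak-$\ast$ convergence of rapidly oscillating periodic functions to their mean, recast for multiplication operators.
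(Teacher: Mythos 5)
Your proof is correct. The paper does not give its own argument here; it settles the theorem by citing \cite[Theorem~13.2.4]{SeTrWa22}, so there is nothing in the paper to compare against. Your route --- uniform operator-norm bound $\norm{f(n\argdot)}_{\Lb(\Lp{2}(\Omega))}\le\norm{f}_{\Lp{\infty}}$ plus density of restricted step functions to reduce to $\int_E g(nx)\dx[x]$ with $g$ mean-zero and $\Z^d$-periodic, the $\ord{n^{-1}}$ cell-count estimate for boxes after the substitution $y=nx$, and approximation of a bounded measurable set by a finite union of boxes --- is the standard self-contained proof of the periodic oscillation lemma, and each step is sound. The one piece of wording to tighten is the appeal to ``outer regularity'': what you actually use is the Littlewood-type fact that a set of finite Lebesgue measure can be approximated in symmetric difference by a finite union of boxes, which is correct and standard but is not literally outer regularity (that produces open supersets, which would still need truncation to a finite union). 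The fact is right; only its name is slightly off.
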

\begin{proof}
See, e.g., \cite[Theorem~13.2.4]{SeTrWa22}.
\end{proof}

\end{document}